\documentclass[final,onefignum,onetabnum]{siamart251216}
\usepackage{verbatim}
\usepackage{comment}
\usepackage{fancyhdr}
\usepackage{amsmath, amsfonts, amssymb}
\usepackage{mathtools}
\usepackage{bm}
\usepackage{graphicx}
\usepackage{booktabs}
\usepackage{array}
\usepackage{multirow}
\usepackage{float}
\usepackage{enumerate}
\usepackage{enumitem}
\usepackage{xcolor} % modern replacement for color
\usepackage{stmaryrd}
\usepackage{cite}
\usepackage[titletoc, title]{appendix} 
\usepackage{braket,amsfonts}
\usepackage{xspace}
\usepackage{bold-extra}
\usepackage[most]{tcolorbox}

\newsiamthm{claim}{Claim}
\newsiamremark{remark}{Remark}
\newsiamremark{hypothesis}{Hypothesis}
\crefname{hypothesis}{Hypothesis}{Hypotheses}

\Crefname{ALC@unique}{Line}{Lines}

\usepackage{amsopn}

\colorlet{texcscolor}{blue!50!black}
\colorlet{texemcolor}{red!70!black}
\colorlet{texpreamble}{red!70!black}
\colorlet{codebackground}{black!25!white!25}

\newsiamthm{lem}{Lemma}
\newsiamthm{thm}{Theorem}
\newsiamthm{cor}{Corollary}
\newsiamthm{prop}{Proposition}
\newsiamremark{rem}{Remark}
\newsiamremark{Def}{Definition}
\newsiamremark{exmp}{Example}
\newsiamthm{assum}{Assumption}
\newsiamthm{coro}{Corollary}
\numberwithin{equation}{section}

\newcommand{\ben}{\begin{align*}}
\newcommand{\een}{\end{align*}}

\newcommand{\cT}{\mathcal{T}}

\lstdefinestyle{siamlatex}{%
  style=tcblatex,
  texcsstyle=*\color{texcscolor},
  texcsstyle=[2]\color{texemcolor},
  keywordstyle=[2]\color{texemcolor},
  moretexcs={cref,Cref,maketitle,mathcal,text,headers,email,url},
}

\tcbset{%
  colframe=black!75!white!75,
  coltitle=white,
  colback=codebackground, % bottom/left side
  colbacklower=white, % top/right side
  fonttitle=\bfseries,
  arc=0pt,outer arc=0pt,
  top=1pt,bottom=1pt,left=1mm,right=1mm,middle=1mm,boxsep=1mm,
  leftrule=0.3mm,rightrule=0.3mm,toprule=0.3mm,bottomrule=0.3mm,
  listing options={style=siamlatex}
}

\newtcblisting[use counter=example]{example}[2][]{%
  title={Example~\thetcbcounter: #2},#1}

\newtcbinputlisting[use counter=example]{\examplefile}[3][]{%
  title={Example~\thetcbcounter: #2},listing file={#3},#1}

\DeclareTotalTCBox{\code}{ v O{} }
{ %fontupper=\ttfamily\color{texemcolor},
  fontupper=\ttfamily\color{black},
  nobeforeafter,
  tcbox raise base,
  colback=codebackground,colframe=white,
  top=0pt,bottom=0pt,left=0mm,right=0mm,
  leftrule=0pt,rightrule=0pt,toprule=0mm,bottomrule=0mm,
  boxsep=0.5mm,
  #2}{#1}

\patchcmd\newpage{\vfil}{}{}{}
\title{The error estimate of entropy-stable discontinuous Galerkin methods for hyperbolic conservation laws \thanks{Submitted to the editors DATE.
\funding{Y. Liu's Research is partially supported by NSFC grant 12571395, 12288201, the Strategic Priority Research Program of the Chinese Academy of Sciences under the Grant No. XDB0640000, and the Youth Innovation Promotion Association (CAS).}}}

\author{Xu-Kun Chen \thanks{Academy of Mathematics and System Sciences and School of Mathematical Science, University of Chinese Academy of Sciences, Chinese Academy of Sciences, Beijing 100190, P.R. China.  (\email{chenxukun25@mails.ucas.ac.cn}). }
\and Yong Liu \thanks{Corresponding author. ICMSEC, State Key Laboratory of Mathematical Sciences (SKLMS), Academy of Mathematics and Systems Science, and School of Mathematical Science, University of Chinese Academy of Sciences, Chinese Academy of Sciences, Beijing 100190, P.R. China. (\email{yongliu@lsec.cc.ac.cn}).}
\and Chi-Wang Shu \thanks{Division of Applied Mathematics, Brown University, Providence, RI 02912, USA. 
(\email{chi-wang\_shu@brown.edu}) }}

\headers{Error estimates of entropy stable DG methods}{X.-K. Chen, Y. Liu, and C.-W. Shu}
\ifpdf
\hypersetup{ pdftitle={The error estimate of entropy-stable discontinuous Galerkin methods for hyperbolic conservation laws} }
\fi
\allowdisplaybreaks[4]
\begin{document}
\maketitle
\begin{abstract}
Entropy inequalities are fundamental to the well-posedness of hyperbolic conservation laws, providing the essential criterion for selecting the physically admissible solution among infinitely many weak solutions. Chen and Shu [J. Comput. Phys. 345 (2017)] proposed a unified framework for constructing high-order discontinuous Galerkin (DG) methods that satisfy entropy inequalities for any given entropy via specific numerical quadrature; however, their accompanying error analysis was limited to the truncation error level, leaving a critical gap in the rigorous convergence theory for these entropy-stable schemes. This paper closes that gap by establishing rigorous a priori error estimates for semi-discrete entropy-stable DG (ESDG) methods on general unstructured meshes for hyperbolic conservation laws. The analysis applies to both scalar equations and systems, and is built upon a finite-difference-type consistency–stability argument carried out directly at the nodal level. Under a polynomial-reconstruction hypothesis and an $L^\infty$ a priori bound, we prove an $O(h^k)$  error estimate in a quadrature-based norm, which is equivalent to the broken $L^2$  norm on the finite-dimensional reconstruction space. We further extend this framework to the entropy-stable oscillation-free DG (ESOFDG) method introduced by Liu, Lu, and Shu [SIAM J. Sci. Comput. 46 (2024)], demonstrating that the additional damping terms do not degrade the convergence order. Numerical experiments suggest that the observed convergence rates may exceed the theoretical prediction by up to half an order.
\end{abstract}

\begin{keywords}
 discontinuous Galerkin method, error estimate, 
 hyperbolic conservation laws, entropy stability, summation-by-parts 
\end{keywords}

\begin{MSCcodes}
65M12, 65M15, 65M20
\end{MSCcodes}

\section{Introduction}
\label{sec:introduction}

Hyperbolic conservation laws govern a wide variety of physical phenomena, among which gas dynamics in the continuum regime represents a canonical and extensively studied setting. The general form of hyperbolic conservation law systems is 
\begin{subequations}\label{eq:conservation-laws}
\begin{align}
    \partial_t \mathbf{u} + \sum_{m=1}^d \partial_{x_m} \mathbf{f}_m(\mathbf u)=0,&
    \quad (\mathbf x,t)\in \Omega\times[0,T], 
    \label{eq:conservation-laws-a}
    \\
    \mathbf{u}=\mathbf{g},&
    \quad (\mathbf x,t)\in \Omega\times \lbrace 0\rbrace,
    \label{eq:conservation-laws-b}
\end{align} 
\end{subequations}
where $\Omega\subset \mathbb{R}^d, \,d \geq 1$ and \(\mathbf{u},\mathbf{f}_m\in \mathbb{R}^n, n \geq 1\) are vector-valued functions and flux functions respectively. 
A hallmark of nonlinear hyperbolic conservation laws is the spontaneous 
formation of shock waves and contact discontinuities in finite time, irrespective of the smoothness of the 
initial or boundary data. This fundamental phenomenon necessitates the framework of weak solutions, within 
which uniqueness is generically lost. Consequently, to isolate the physically relevant solution, one must 
augment the governing equations with entropy admissibility conditions. The well-posedness theory is well 
established for scalar conservation laws ($n=1$) and one-dimensional systems ($d=1$) subject to initial data 
of sufficiently small total variation, for which the entropy solution exists and is unique. By stark contrast, 
for general multi-dimensional systems, the global existence and uniqueness of entropy solutions constitute 
outstanding open problems, and the theoretical foundations remain substantially incomplete. We direct the reader 
to \cite{Dafermos2010,Godlewski1991} for an exhaustive survey. In the computational realm, although entropy 
conditions alone do not preclude non-uniqueness, the construction of discrete approximations satisfying discrete 
analogues of entropy inequalities at the grid level remains a central desideratum. This requirement defines 
the notion of entropy stability, and schemes that satisfy it are commonly termed entropy-stable (ES) schemes.

For first- and second-order discretizations, the entropy-conservative and entropy-stable 
fluxes of Tadmor \cite{1987-Tadmor-Mathcom,2003-Tadmor-ActaNum} furnish a general framework. 
In the finite volume setting, Fjordholm, Mishra, and Tadmor \cite{2012-FjordholmMishraTadmor-SIMNU} 
devised the TeCNO scheme, which combines high-order entropy-conservative fluxes 
\cite{Lefloch2002SINUM} with the sign property of essentially non-oscillatory (ENO) reconstruction 
\cite{Fjordholm2013FCM} to ensure entropy stability. More recently, attention has 
turned to entropy-stable quadrature-based discontinuous Galerkin (DG) methods. Chen 
and Shu \cite{2017-ChenShu-JCP} constructed such a method on unstructured simplex meshes 
using Gauss–Lobatto-type quadrature rules with collocated surface quadrature points and discrete 
operators endowed with the multi-dimensional summation-by-parts (SBP) property 
\cite{Fernandez2018JSC,Hicken2016SISC}. Several related entropy-stable DG methods 
within the SBP framework have subsequently been proposed 
\cite{Crean2017AIAA,Crean2018JCP,Chan2018JCP,Chan2019SISC}; the reader 
is referred to \cite{2020-ChenShu-CSIAM} for a comprehensive survey. 
The paradigm has further been extended to encompass the Navier-Stokes 
equations \cite{Carpenter2014SISC,Gassner2018JSC}, magnetohydrodynamics (MHD) 
\cite{Bohm2020JCP, Liu2018JCP}, shallow water equations \cite{Gassner2016AMC,Wintermeyer2017JCP}, 
Euler equations with gravity \cite{2025-LiuGuoJiangZhang-JCP,2026-LiuGuoJiangZhang-JSC}, gradient 
flows \cite{Sun2019KRM}, two-phase flows \cite{Renac2019JCP}, and stochastic problems \cite{Offner2018M2AN}.

Although entropy stability of the ESDG method is well established, rigorous error estimates for smooth solutions remain unavailable, to the best of our knowledge. For semi-discrete DG and Runge–Kutta DG schemes, such estimates have been extensively developed; see, for example, \cite{2004-ZhangShu-SINUM,2006-ZhangShu-SINUM,2010-ZhangShu-SINUM}. The effect of inexact quadrature on DG error analysis has also been addressed in \cite{2017-HuangShu-NMPDE}; however, the results therein do not extend to general entropy-conservative SBP flux-differencing schemes. Recently, Worku, Del Rey Fernández, and Zingg \cite{2026-Worku-arxiv} established convergence of entropy-conservative split-form discretizations of symmetric hyperbolic systems with homogeneous flux functions that possess globally bounded second derivatives, within the multi-dimensional continuous-SBP framework. Ranocha \cite{2026-Ranocha-arxiv} subsequently extended this analysis to diagonal-norm SBP operators on curved meshes and derived an a priori $h^p$ error estimate for entropy-conservative high-order SBP semi-discretizations of nonlinear symmetrizable systems via a discrete relative-entropy argument. Neither of these analyses, however, encompasses the  Chen-Shu ESDG method  \cite{2017-ChenShu-JCP} with general entropy-stable interface fluxes or the 
oscillation-free extension \cite{2024-LiuLuShu-SISC} analyzed below. The fundamental obstacle is that classical DG error analysis relies crucially on the assumption that the numerical solution resides in a prescribed space of piecewise polynomials of degree $k$. For the entropy-stable nodal DG method introduced in \cite{2017-ChenShu-JCP}, this assumption is violated: in multiple space dimensions, the nodal values produced by the scheme do not, in general, correspond to a piecewise polynomial of degree $k$. Consequently, existing error estimates for DG methods with inexact quadrature are neither directly applicable nor adequate to establish high-order convergence for the ESDG method. Nevertheless, the nodal flux-differencing formulation and SBP operators impart a finite-difference-type structure to the ESDG method, thereby motivating a consistency-stability analysis formulated directly at the nodal level.

In this paper, we prove an a priori \(h^k\) error estimate in a quadrature-based norm equivalent to the broken \(L^2\)  norm on the reconstruction space for smooth solutions of the semi-discrete ESDG method applied to \eqref{eq:conservation-laws}. The result covers both scalar conservation laws and systems under a polynomial reconstruction hypothesis and an \(L^\infty\)  a priori bound, and extends to the entropy-stable oscillation-free DG (ESOFDG) method \cite{2024-LiuLuShu-SISC}, where the additional damping terms are shown not to degrade the convergence rate. The proof follows a consistency–stability approach: we first derive a local truncation error by inserting the nodal values of the exact solution into the semi-discrete scheme, then formulate the error equation and bound the volume, interface, and truncation contributions separately. The leading volume terms are controlled through the skew-symmetric SBP structure combined with entropy symmetrization of the flux Jacobians; the remaining nonlinear terms are estimated via Taylor's expansions and mesh-scaled matrix-norm bounds; and the interface contributions are managed using the entropy stability of the numerical flux. The resulting differential inequality is closed by a Gronwall argument. We further justify the polynomial reconstruction hypothesis and a priori assumption by continuation arguments. For the ESOFDG method, estimates of the damping coefficients together with the projection structure of the damping terms ensure that the additional dissipation does not affect the convergence order. Numerical experiments, however, indicate that the observed convergence rates may be 
higher than the theoretical bound proved by up to half an order. 
This suggests that the present estimate, 
which is indeed derived from the truncation error, may not be sharp. 
Similar phenomena have been reported in the error analysis of DG-type methods, 
indicating the possible presence of more delicate cancellation 
or superconvergence mechanisms \cite{2003-ZhangShu-M3AS}.

This paper is organized as follows. In Section~\ref{sec:preliminaries}, we introduce some basic notations,
the semi-discrete ESDG method, and some useful lemmas. In Section~\ref{sec:scalar-case}, we first 
derive the error estimate for the ESDG method for scalar conservation laws, then give the detailed 
discussion on our assumptions and extend our proof to the so-called ESOFDG scheme.
In Section~\ref{sec:system-case}, we prove the system counterpart of Section~\ref{sec:scalar-case}. 
Numerical results are reported in Section~\ref{sec:numerical}. We finally give concluding remarks
in Section~\ref{sec:conclude}. Some technical proofs of lemmas are left in Appendix~\ref{sec:app}.

\section{Preliminaries}
\label{sec:preliminaries}

In this section, we will first give a brief review of the entropy analysis for \eqref{eq:conservation-laws} 
on the PDE level. Then we introduce the quadrature rules and the SBP operators \cite{2020-ChenShu-CSIAM}, 
which mimic
integration by parts at the discrete level. After that, we can describe the ESDG method. At the end of this 
section, we will give some useful auxiliary results that will be used.

\subsection{Entropy analysis}

A strictly convex function \(U(\mathbf{u})\) is called an entropy function for \eqref{eq:conservation-laws} 
if there exist entropy fluxes, \( \mathbf{F}(\mathbf{u})=( F_1(\mathbf{u}), F_2(\mathbf{u}),\ldots, F_d(\mathbf{u}))^T\), such that
\begin{equation}
    \label{eq:entropy-pair}
    U^\prime(\mathbf{u}) \mathbf{f}_m'(\mathbf{u})=F_m'(\mathbf{u}),
    \quad 
    m=1,\ldots,d.
\end{equation}
Here \(U'(\mathbf{u})\) and \(F_m'(\mathbf{u})\) are understood as row vectors, 
and \(\mathbf{f}_m'(\mathbf{u})\) is the \(n\times n\) Jacobian matrix.
We call the \((U(\mathbf{u}),\mathbf{F}(\mathbf{u}))\) an entropy pair.
The entropy variable and the entropy Hessian are defined by
\[
    \mathbf v(\mathbf u):=U'(\mathbf u)^T,
    \quad
    H(\mathbf u):=U''(\mathbf u).
\]
In the error analysis below, we assume uniform convexity on
the relevant set of states: there exist constants \(C_1,C_2>0\) such
that
\begin{equation*}
    C_1 I_n \leq H(\mathbf u) \leq C_2 I_n,
\end{equation*}
where \(I_n\) is the \(n\times n\) identity matrix. Consequently,
\(H(\mathbf u)\) is symmetric positive-definite, and the mapping
\(\mathbf u\mapsto\mathbf v(\mathbf u)\) is invertible.
We also define the entropy potential fluxes as follows:
\begin{equation*}
    \psi_m(\mathbf{v}):=\mathbf{v}^T \mathbf{f}_m (\mathbf{u}(\mathbf{v}))-F_m(\mathbf{u}(\mathbf{v})),
    \quad m=1,\ldots,d.
\end{equation*}
One can easily verify that
$\psi_m'(\mathbf{v})=\mathbf{f}_m(\mathbf{u}(\mathbf{v}))^T$.
In addition, for a unit vector \(\mathbf n=(n_1,\ldots,n_d)^T\), we set
\[
    \mathbf{f}_{\mathbf n}(\mathbf{u}):=\sum_{m=1}^d n_m \mathbf{f}_m(\mathbf{u}),
    \quad
    F_{\mathbf n}(\mathbf{u}):=\sum_{m=1}^d n_m F_m(\mathbf{u}),
    \quad
    \psi_{\mathbf n}(\mathbf{v}):=\sum_{m=1}^d n_m \psi_m(\mathbf{v}).
\]
In general, the physical relevant solution \(\mathbf{u}\) of \eqref{eq:conservation-laws} 
is meant to satisfy the following entropy inequality
\begin{equation}
    \partial_t U(\mathbf{u})+\sum_{m=1}^{d}\partial_{x_m} F_m(\mathbf{u}) \leq 0,
    \label{eq:entropy-inequality}
\end{equation} 
in the distribution sense for all entropy pairs.
Formally integrating \eqref{eq:entropy-inequality} in space with the assumption that \(\mathbf{u}\) is 
compactly supported, we obtain
\begin{equation*}
    \frac{d}{dt}\int_{\Omega} U(\mathbf{u}) dx \leq 0,
\end{equation*}
which indicates that the total amount of entropy is non-increasing in time.
For scalar conservation laws, any convex function \(U\) defines an entropy function with entropy fluxes
\(F_m(u)=\int^u U'(s)f_m'(s)ds\). For hyperbolic systems, the existence of entropy functions is equivalent to the symmetrizability of the systems 
\cite{2013-GRAMS-springer}. Fortunately, for most systems we are interested in, such as shallow water equations, 
compressible Euler equations, and MHD equations, we are able to find the entropy functions with
physical meaning. 

For numerical aspects, we would like to find our semi-discrete scheme
to satisfy \eqref{eq:entropy-inequality}. Such property of schemes is referred to as
\textit{entropy-stable} (ES). To design ES schemes, the following entropy conservative fluxes and entropy stable fluxes 
are proposed by Tadmor \cite{1987-Tadmor-Mathcom, 2003-Tadmor-ActaNum}, and defined as follows.

\begin{Def}
  For \(m=1,\ldots,d\), a two-point numerical flux \(\mathbf{f}_{m,S}(\mathbf{u}_L,\mathbf{u}_R)\) 
is called entropy conservative with respect to \(U\) if it is consistent, symmetric, and satisfies
\begin{equation*}
    (\mathbf{v}_R-\mathbf{v}_L)^T \mathbf{f}_{m,S}(\mathbf{u}_L,\mathbf{u}_R)=\psi_{m,R}-\psi_{m,L},
\end{equation*}
where $\mathbf{v}_{L,R}$ and $\psi_{m,L,R}$ are the entropy variables and entropy potential fluxes at left and right states. Furthermore, given entropy conservative fluxes in all coordinate directions, we define the directional entropy 
conservative flux by
\begin{align*}
    \mathbf{f}_{\mathbf n,S}(\mathbf u_L,\mathbf u_R):=\sum_{m=1}^d n_m \mathbf{f}_{m,S}(\mathbf u_L,\mathbf u_R).
\end{align*}
\end{Def}
\begin{Def}
    A consistent two-point numerical flux \(\hat {\mathbf f}_{\mathbf n}(\mathbf{u}_L,\mathbf{u}_R)\) is called entropy stable 
with respect to \(U\) if it satisfies
\begin{equation*}
    (\mathbf{v}_R-\mathbf{v}_L)^T\hat {\mathbf f}_{\mathbf n}(\mathbf{u}_L,\mathbf{u}_R)
    \le \psi_{\mathbf n,R}-\psi_{\mathbf n,L}.
\end{equation*}
\end{Def}

\subsection{Quadrature rules and SBP operators}
\label{subsec: SBP-intro}

In this subsection, we will briefly review the quadrature rules and SBP operators in order to rewrite
the DG method under the SBP framework. Assume that \(\Omega\subset\mathbb R^d\) is a bounded polygonal
domain. Let \(\{\mathcal T_h\}_{h>0}\) be a family of conforming,
shape-regular, and quasi-uniform simplicial partitions of \(\Omega\).
We denote $h_K:=\operatorname{diam}(K),\, h:=\max_{K\in\mathcal T_h} h_K.$ Let \(\mathcal E_h\) be the set of all interior faces of \(\mathcal T_h\), and \(\mathcal E_K\) be the set of all faces of \(K\in \mathcal{T}_h\).
For each \(\gamma\in\mathcal E_h\), we fix a unit normal vector \(\mathbf n_\gamma\).
If \(\gamma=K^-\cap K^+\), then labels \(K^-\) and \(K^+\) are chosen such that  
\(\mathbf n_\gamma\) points from \(K^-\) to \(K^+\). 
For a piecewise smooth function \(w\), we denote the two traces of \(w\) on \(\gamma\) by
\[
    w^-:=w|_{K^-},\quad w^+:=w|_{K^+}.
\]
When a single element \(K\) is fixed and \(\gamma\in\mathcal E_K\), we also write
$w^{\gamma,K}:=w|_K$ on $\gamma$, 
namely the trace of \(w\) on \(\gamma\) taken from the inner side of \(K\). 
Let \(\widehat K\) be a fixed reference simplex and, for each \(K\in\mathcal T_h\), let
\begin{equation}
    \label{eq:affine-map}
    F_K(\widehat{\mathbf x})
    =
    A_K\widehat{\mathbf x}+\mathbf b_K
    =: \mathbf{x}
\end{equation}
be the affine mapping from \(\widehat K\) onto \(K\). We assume that
the volume and surface quadrature rules, as well as the corresponding
nodal SBP operators on \(K\), are obtained by affine transformation
of fixed reference-element rules and operators. For notational
convenience, all definitions below are stated directly on the element \(K\).
For each simplex \(K\in \mathcal{T}_h\), suppose that there is a quadrature rule of degree at least \(2k-1\)
on \(K\), associated with quadrature nodes and positive weights \(\{(\mathbf{x}^K_j,\omega^K_j)\}_{j=1}^{N_{Q,k}}\),
\(N_{Q,k}\geq N_{P,k}\) with
\[
    N_{P,\nu}= \operatorname{dim} \mathcal{P}^\nu(\mathbb{R}^d),\quad 0\leq \nu\leq k.
\]
For each \(\gamma\in \mathcal{E}_h\), suppose that there is a quadrature rule of degree at least \(2k\) on
\(\gamma\), associated with quadrature nodes and positive weights \(\{(\mathbf{x}^\gamma_j,\tau^\gamma_j)\}_{j=1}^{N_{B,k}}\).
Now we take \(\{ p_l \}_{l=1}^{N_{P,\nu}}\) as the set of basis functions of \(\mathcal{P}^\nu(\mathbb{R}^d)\),
which means
\[
    \{ p_l \}_{l=1}^{N_{P,0}} \subset \{ p_l \}_{l=1}^{N_{P,1}} \subset \cdots \subset \{ p_l \}_{l=1}^{N_{P,k}}.
\]
Then we define the Vandermonde matrices, whose columns are nodal values:
\[
    V^K_{\nu}=[\vec{p}_1^K,\cdots,\vec{p}_{N_{P,\nu}}^K],
    \quad
    V^\gamma_{\nu}=[\vec{p}_1^\gamma,\cdots,\vec{p}_{N_{P,\nu}}^\gamma],
\]
where \(\vec{p}_l^K=[p_l(\mathbf{x}^K_1),\cdots,p_l(\mathbf{x}^K_{N_{Q,k}})]^T, 
\vec{p}_l^\gamma=[p_l(\mathbf{x}^\gamma_1),\cdots,p_l(\mathbf{x}^\gamma_{N_{B,k}})]^T\).
We also define polynomial differential matrices \(\hat{D}_m\) such that
\[
    \frac{\partial}{\partial x_m}p_l (\mathbf{x}) = \sum_{r=1}^{N_{P,k}}\hat{D}_{m,rl} p_r(\mathbf{x}).
\]
Then \(V_k^K \hat{D}_m\) is the Vandermonde matrix of \(\{ \partial_{x_m} p_l (\mathbf{x}) \}_{l=1}^{N_{P,k}} \)
on \(K\). 
For a scalar function \(w\), define the volume and surface nodal vectors by
\[
    \vec w^K
    :=
    \bigl[w(\mathbf x_1^K),\ldots,w(\mathbf x_{N_{Q,k}}^K)\bigr]^T,
    \quad
    \vec w^{\gamma,K}
    :=
    \bigl[w(\mathbf x_1^\gamma),\ldots,w(\mathbf x_{N_{B,k}}^\gamma)\bigr]^T.
\]
Denote
\[
    M^K:=\operatorname{diag}(\omega_1^K,\ldots,\omega_{N_{Q,k}}^K),
    \quad
    B^\gamma:=\operatorname{diag}(\tau_1^\gamma,\ldots,\tau_{N_{B,k}}^\gamma).
\]
We then define the continuous and discrete inner products on \(K\) and \(\gamma\) that
\begin{equation*}
    (u,w)_K:=\int_K uw dx,
    \quad
    (u,w)_{K,\omega}
    :=\sum_{j=1}^{N_{Q,k}}\omega_j^K u(\mathbf x_j^K)w(\mathbf x_j^K)
    =(\vec u^K)^T M^K\vec w^K,
\end{equation*}
\begin{equation*}
    (u,w)_\gamma:=\int_\gamma uw ds,
    \quad
    \langle u,w\rangle_{\gamma,\tau}
    :=\sum_{s=1}^{N_{B,k}}\tau_s^\gamma u(\mathbf x_s^\gamma)w(\mathbf x_s^\gamma)
    =(\vec u^{\gamma,K})^T B^\gamma \vec w^{\gamma,K}.
\end{equation*}

In order to obtain the \textit{nodal} SBP property, we recall the definitions of \textit{degree k difference matrix}
\(D_m^K\) and extrapolation matrices \(\{ R^{\gamma,K}\}_{\gamma\in \mathcal{E}_K}\), for which the following two conditions
hold:
\begin{enumerate}
    \item Exactness: both \(D_m^K\) and \(R^{\gamma,K}\) are exact for polynomials of degree \( k\), i.e.,
        \begin{equation}
            D_m^K V_k^K=V_k^K\hat{D}_m,\quad R^{\gamma,K} V_k^K= V_k^\gamma.
            \label{eq: exact-for-nodal-SBP}
        \end{equation}
    \item Summation-by-parts: setting \(S_m^K:=M^K D_m^K\) and \(E^{\gamma,K}:=(R^{\gamma,K})^T B^\gamma R^{\gamma,K}\), we have
        \begin{equation}
            S_m^K+(S_m^K)^T=\sum_{\gamma\in\mathcal E_K} n_m^{\gamma,K} E^{\gamma,K}.
            \label{eq: nodal-SBP}
        \end{equation}
\end{enumerate}
We also define \(L^2\) projection matrices under the discrete
inner product: 
\begin{equation*}
    P^K_\nu = \big( (V_\nu^K)^T M^K V_\nu^K\big)^{-1}(V_\nu^K)^T M^K,\quad 0\leq \nu\leq k.
\end{equation*}
In particular, for \(\nu=0\), we have the orthogonality of the operator \(P_0^K\) that
\begin{equation*}
    \vec{c}^T M^K (\vec{w}^K-V_0^KP_0^K \vec{w}^K)=0, \quad \forall \vec{w}^K\in \mathbb{R}^{N_{Q,k}},
\end{equation*}
where \(\vec{c}\in \mathbb{R}^{N_{Q,k}}\) is a constant vector. 
The existence of SBP difference matrices is ensured by the following theorem \cite{2020-ChenShu-CSIAM}.
\begin{thm}
    Assume that we have extrapolation matrices \(R^{\gamma,K}\) satisfying the exactness property. Then the difference 
    matrices, given by the formula
    \begin{equation*}
    D_m^K
    =
    \frac12 (M^K)^{-1}
    \sum_{\gamma \in \mathcal E_K}
    n_m^{\gamma,K}
    \bigl(R^{\gamma,K}+V_k^\gamma P_k^K\bigr)^T
    B^\gamma
    \bigl(R^{\gamma,K}-V_k^\gamma P_k^K\bigr)
    +
    V_k^K\widehat D_m P_k^K.
\end{equation*}
satisfy \eqref{eq: exact-for-nodal-SBP} and \eqref{eq: nodal-SBP}.
\end{thm}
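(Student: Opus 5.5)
The plan is to verify the two required properties, \eqref{eq: exact-for-nodal-SBP} and \eqref{eq: nodal-SBP}, directly from the explicit formula. Throughout I abbreviate $V:=V_k^K$, $P:=P_k^K$, $M:=M^K$, $G:=V^TMV$ (so $P=G^{-1}V^TM$), and $W^\gamma:=V_k^\gamma P$. I will use two elementary identities. First, $PV=G^{-1}V^TMV=I$. Second, $MV=P^TG$, which holds because $P^T=MVG^{-1}$. I take for granted that $G$ is invertible, i.e.\ that the volume nodes are unisolvent for $\mathcal P^k$. This is implicit in the definition of $P_k^K$.

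\emph{Exactness.} Multiply $D_m^K$ on the right by $V$. In the surface term, the factor $(R^{\gamma,K}-V_k^\gamma P)V=V_k^\gamma-V_k^\gamma=0$ by the assumed exactness of $R^{\gamma,K}$ together with $PV=I$. Hence the whole sum vanishes, and what remains is $V\widehat D_m PV=V\widehat D_m$. This is exactly the first identity in \eqref{eq: exact-for-nodal-SBP}.

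\emph{SBP property.} Write $S_m^K=MD_m^K=\tfrac12\sum_\gamma n_m^{\gamma,K}(R+W)^TB(R-W)+MV\widehat D_mP$, dropping superscripts $\gamma,K$ on $R$, $W$, $B$.

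For the surface part, adding the term to its transpose gives $(R+W)^TB(R-W)+(R-W)^TB(R+W)$. Expanding, the cross terms cancel and the sum equals $2R^TBR-2W^TBW$. So the surface part of $S_m^K+(S_m^K)^T$ equals
\[
\sum_\gamma n_m^{\gamma,K}\bigl(E^{\gamma,K}-P^T(V_k^\gamma)^TB^\gamma V_k^\gamma P\bigr).
\]
For the volume part, the identity $MV=P^TG$ gives $MV\widehat D_mP=P^TG\widehat D_mP$. Its symmetrization is therefore $P^T\bigl(G\widehat D_m+\widehat D_m^TG\bigr)P$.

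It thus suffices to prove the polynomial identity
\[
G\widehat D_m+\widehat D_m^TG=\sum_{\gamma\in\mathcal E_K}n_m^{\gamma,K}(V_k^\gamma)^TB^\gamma V_k^\gamma .
\]
Once this holds, the volume part cancels the subtracted surface terms and leaves $\sum_\gamma n_m^{\gamma,K}E^{\gamma,K}$, as required.

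\emph{The polynomial identity (main step).} The $(r,l)$ entry of the left-hand side is $(p_r,\partial_{x_m}p_l)_{K,\omega}+(\partial_{x_m}p_r,p_l)_{K,\omega}$. Each integrand has degree at most $2k-1$, so the volume quadrature is exact for it. The entry therefore equals $\int_K\partial_{x_m}(p_rp_l)\,dx$. By the divergence theorem this is $\sum_\gamma n_m^{\gamma,K}\int_\gamma p_rp_l\,ds$. Since $p_rp_l$ has degree at most $2k$, the surface quadrature is exact, and the integral equals $\sum_\gamma n_m^{\gamma,K}\langle p_r,p_l\rangle_{\gamma,\tau}$. That is precisely the $(r,l)$ entry of the right-hand side.

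This step is the only place where the quadrature degrees ($2k-1$ in the volume, $2k$ on faces) enter. It is the delicate point of the argument: one must check that these degrees are exactly sufficient, and that the face quadrature is used on every face of $K$, including boundary faces, which I assume carry the same surface rule. Everything else is linear algebra.
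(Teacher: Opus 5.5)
Your proof is correct and complete. Exactness follows from $(R^{\gamma,K}-V_k^\gamma P_k^K)V_k^K=0$. The SBP property follows from symmetrizing the surface term and cancelling $\sum_{\gamma}n_m^{\gamma,K}(P_k^K)^T(V_k^\gamma)^TB^\gamma V_k^\gamma P_k^K$ against the volume term, using the quadrature-exact identity $G\widehat D_m+\widehat D_m^TG=\sum_{\gamma\in\mathcal E_K}n_m^{\gamma,K}(V_k^\gamma)^TB^\gamma V_k^\gamma$. This is the standard direct verification of this explicit formula; the paper itself gives no proof and simply quotes the result from \cite{2020-ChenShu-CSIAM}.

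One small wording fix: since $N_{Q,k}$ may exceed $N_{P,k}$, the hypothesis you need is that $V_k^K$ has full column rank (exactly what makes $P_k^K$ well defined), not that the nodes are unisolvent for $\mathcal P^k$.
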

For the choice of the extrapolation matrices \(R^{\gamma,K}\), the readers can refer to \cite{2020-ChenShu-CSIAM}. 
According to the construction of \(P_0^K\) and \(V_0^K\), 
one can easily check that \(Q^K:=I_{N_{Q,k}}-V_0^KP_0^K\) satisfy
\begin{equation}
    \label{eq:QK-ortho}
    Q^K=(Q^K)^2, \quad (M^K Q^K)^T=M^KQ^K. 
\end{equation}
For the vector-valued function \(\mathbf{w}:\mathbb{R}^d \rightarrow \mathbb{R}^n\), we introduce the vector of nodal values
\begin{align*}
        &\vec{\mathbf w}^K
        :=
        \begin{bmatrix}
            \mathbf w(\mathbf x_1^K)^T,\ldots,
            \mathbf w(\mathbf x_{N_{Q,k}}^K)^T
        \end{bmatrix}^T
        \in \mathbb{R}^{n N_{Q,k}},
        \\
        &\vec{\mathbf w}^{\gamma,K}
        :=
        \begin{bmatrix}
            \mathbf w^{\gamma,K}(\mathbf x_1^\gamma)^T,\ldots,
            \mathbf w^{\gamma,K}(\mathbf x_{N_{B,k}}^\gamma)^T
        \end{bmatrix}^T
        \in \mathbb{R}^{n N_{B,k}}.
\end{align*}
as well as the Kronecker products
\begin{equation*}
    \begin{aligned}
        &\mathbf{M}^K=M^K \otimes I_n, \quad \mathbf{B}^\gamma=B^\gamma \otimes I_n, \quad
        \mathbf{D}_m^K=D^K_m \otimes I_n, \quad \mathbf{R}^{\gamma,K}=R^{\gamma,K} \otimes I_n, \\
        &\hat{\mathbf{M}}^K=\hat{M}^K \otimes I_n, \quad \hat{\mathbf{D}}_m=\hat{D}_m \otimes I_n, \quad
        \mathbf{V}_\nu^K=V_\nu^K \otimes I_n, \quad \mathbf{V}^{\gamma}_\nu=V^{\gamma}_\nu \otimes I_n,\\
        & \mathbf{Q}^K= Q^K\otimes I_n, \quad
        \mathbf P_\nu^K:=P_\nu^K\otimes I_n,
        \quad
        \mathbf S_m^K:=S_m^K\otimes I_n.
    \end{aligned}
\end{equation*}
All the properties before remain valid for vector-valued nodal data
under the Kronecker-product convention introduced above.

For a volume nodal vector \(\vec{\mathbf w}^K\), we define the discrete \(h\)-norm by
\[
    \|\vec{\mathbf w}^K\|_{h,K}^2
    :=
    (\vec{\mathbf w}^K)^T
    \mathbf M^K\vec{\mathbf w}^K,
    \quad
    \|\vec{\mathbf w}\|_h^2
    :=
    \sum_{K\in\mathcal T_h}\|\vec{\mathbf w}^K\|_{h,K}^2,
\]
where the corresponding global broken nodal vector is
\[
    \vec{\mathbf w}
    :=
    \bigl(\vec{\mathbf w}^{\,K}\bigr)_{K\in\mathcal T_h}
    \in
    \prod_{K\in\mathcal T_h}\mathbb R^{nN_{Q,k}}.
\]  
We also define the \(\ell^q,1\leq q\leq \infty\) norms by
\[
    \|\vec{\mathbf w}^K\|_{\ell^q,K}^q
    :=
    \sum_{j=1}^{N_{Q,k}}|\mathbf w_j^K|^q,
    \quad 1\le q<\infty,
\]
and
\[
    \|\vec{\mathbf w}^K\|_{\ell^\infty,K}
    :=
    \max_{1\le j\le N_{Q,k}}|\mathbf w_j^K|,
\]
where \(\mathbf{w}_j^K=\mathbf{w}(\mathbf{x}_j^K)\) and \(|\cdot|\) denotes the Euclidean norm.
The corresponding global norms are
\[
    \|\vec{\mathbf w}\|_{\ell^q}^q
    :=
    \sum_{K\in\mathcal T_h}\|\vec{\mathbf w}^K\|_{\ell^q,K}^q,
    \quad
    \|\vec{\mathbf w}\|_{\ell^\infty}
    :=
    \max_{K\in\mathcal T_h}\|\vec{\mathbf w}^K\|_{\ell^\infty,K}.
\]
\begin{rem}[Affine transformation of the SBP operators]
\label{rem:affine-transformation-sbp}
Under the affine mapping \eqref{eq:affine-map}, and
let \(\widehat\gamma \in \mathcal{E}_{\widehat K}\) be the reference face
mapped onto \(\gamma \in \mathcal{E}_K\), and set
\[
    J_K:=|\det A_K|,
    \quad
    J_{\gamma,K}:=J_K\bigl|(A_K^{-1})^{T}\widehat{\mathbf n}_{\widehat\gamma}\bigr|.
\]
After a consistent ordering of the volume and face nodes, we obtain
\begin{equation}
    \label{eq:affine-map-sbp}
    \begin{aligned}
    &M^K=J_K M^{\widehat K},
    \quad
    B^\gamma=J_{\gamma,K}B^{\widehat\gamma},
    \\
    &D_m^K=\sum_{\alpha=1}^d ((A_K^{-1})^{T})_{m\alpha}D_\alpha^{\widehat K},
    \quad
    R^{\gamma,K}=R^{\widehat\gamma,\widehat K},
    \quad
    Q^K=Q^{\widehat K}.
    \end{aligned}
\end{equation}
\end{rem}

\subsection{Entropy stable DG schemes with collocated surface nodes}
\label{subsec:collocated-esdg}

We now introduce the ESDG scheme that is the focus of this paper. We first recall the classical DG method. Given polynomial degree \(k\geq 0\), we define the DG finite element space,
\begin{equation*}
    \mathbf{W}_h^k=\{\mathbf{w}_h: \mathbf{w}_h|_K=\mathbf{w}_h^K\in \Big[ \mathcal{P}^k(K)\Big]^n,K\in \mathcal{T}_h \}.
\end{equation*}
For \eqref{eq:conservation-laws}, the classical DG method can be constructed as follows:
we seek \(\mathbf u_h \in \mathbf{W}_h^k\) such that
for each \(\mathbf w_h \in \mathbf{W}_h^k\) and
\(K\in \mathcal{T}_h\), we have
\begin{equation}
\label{eq:classic-DG}
    (\partial_t \mathbf{u}_h^K, \mathbf{w}_h^K)_K-
    \sum_{m=1}^d (\mathbf{f}_m(\mathbf{u}_h^K),\partial_m \mathbf{w}_h^K)_K
    =
    -\sum_{\gamma\in\mathcal E_K}
    \langle \hat{\mathbf{f}}_{\mathbf{n}}(\mathbf{u}_h^{\gamma,K},\mathbf{u}_h^{\gamma,\tilde{K}}), \mathbf{w}_h^{\gamma,K}\rangle_\gamma
    ,
\end{equation}
where \(\tilde{K}\) denotes the neighboring element sharing the face \(\gamma\) with \(K\), and \(\hat{\mathbf{f}}_{\mathbf{n}}\) is the interface numerical flux function. By applying the volume quadrature rule and the surface quadrature rule to \eqref{eq:classic-DG} and replacing the standard difference operator with a flux-differencing term, we obtain the ESDG method introduced in \cite{2017-ChenShu-JCP},
\begin{equation}
\label{eq:esdg-scheme}
    \mathbf{M}^K\frac{d\vec{\mathbf u}_h^K}{dt}
    +
    2\sum_{m=1}^d
    \bigl(
       \mathbf{S} _m^K\circ \mathbf{F}_{m,S}(\vec{\mathbf u}_h^K,\vec{\mathbf u}_h^K)
    \bigr)\vec{\mathbf 1}^K
    =
    \sum_{\gamma\in\mathcal E_K}
    ( \mathbf{R}^{\gamma,K})^T \mathbf{B} ^\gamma
    \left(
        \vec{\mathbf f}_{\mathbf n}^{\gamma,K}
        -
        \vec{\hat{\mathbf f}}_{\mathbf n}^{\gamma,K}
    \right).
\end{equation}
Here \(\circ\) denotes the Hadamard product and \(\vec{\mathbf 1}^K\) is the vector of ones,
and \(\mathbf{F}_{m,S}(\cdot,\cdot)\)  is the matrix of pairwise combinations of entropy conservative fluxes
\begin{equation*}
    \mathbf{F}_{m,S}(\vec{\mathbf{u}}_L,\vec{\mathbf{u}}_R)=
    \begin{bmatrix}
        \operatorname{diag}(\mathbf{f}_{m,S}(\mathbf{u}_{L,1},\mathbf{u}_{R,1})) & \cdots & \operatorname{diag}(\mathbf{f}_{m,S}(\mathbf{u}_{L,1},\mathbf{u}_{R,\mathcal{N}_R})) \\
        \vdots & \ddots & \vdots \\
        \operatorname{diag}(\mathbf{f}_{m,S}(\mathbf{u}_{L,\mathcal{N}_L},\mathbf{u}_{R,1})) & \cdots & \operatorname{diag}(\mathbf{f}_{m,S}(\mathbf{u}_{L,\mathcal{N}_L},\mathbf{u}_{R,\mathcal{N}_R})) 
    \end{bmatrix},
\end{equation*}
for \(\vec{\mathbf{u}}_L\in \mathbb{R}^{n \mathcal{N}_L}\) and \(\vec{\mathbf{u}}_R\in \mathbb{R}^{n \mathcal{N}_R}\). The $\vec{\mathbf f}_{\mathbf n}^{\gamma,K}$ and $
        \vec{\hat{\mathbf f}}_{\mathbf n}^{\gamma,K}$ are defined as 
\begin{align*}
    &\vec{\mathbf f}_{\mathbf n}^{\gamma,K}=
    \begin{bmatrix}
        (\mathbf f_{\mathbf n}(\mathbf{u}_{h,1}^{\gamma,K}))^T,\ldots,
        (\mathbf f_{\mathbf n}(\mathbf{u}_{h,N_{B,k}}^{\gamma,K}))^T
    \end{bmatrix}^T,\\
    &\vec{\hat{\mathbf{f}}}_{\mathbf{n}}^{\gamma,K}=
    \begin{bmatrix}
        (\hat{\mathbf{f}}_{\mathbf{n}}(\mathbf{u}_{h,1}^{\gamma,K},\mathbf{u}_{h,1}^{\gamma,\tilde{K}}))^T,\ldots,
        (\hat{\mathbf{f}}_{\mathbf{n}}(\mathbf{u}_{h,N_{B,k}}^{\gamma,K},\mathbf{u}_{h,N_{B,k}}^{\gamma,\tilde{K}}))^T
    \end{bmatrix}^T.
\end{align*}
Note that this method requires the collocated surface quadrature nodes, namely, 
\(\{\mathbf{x}^\gamma_j\}_{j=1}^{N_{B,k}}\subset\{\mathbf{x}^K_j\}_{j=1}^{N_{Q,k}}\).
Thus \(\mathbf R^{\gamma,K}\) is simply a restriction onto \(\gamma\).
Under the assumptions that \(\mathbf f_{m,S}\) is entropy
conservative and that
\(\widehat{\mathbf f}_{\mathbf n}\) is entropy stable with respect
to \(U\), the
\eqref{eq:esdg-scheme} satisfies a semi-discrete entropy inequality \cite{2017-ChenShu-JCP}. Next, 
we introduce some auxiliary results that will be used in 
the error estimates of \eqref{eq:esdg-scheme}.
Here and below, for two nonnegative quantities \(a\) and
\(b\), we write \(a\lesssim b\) if there exists a constant \(C>0\),
independent of \(h\), such that
\(
    a\le Cb.
\)
Moreover, we write \(a\sim b\) if both \(a\lesssim b\) and \(b\lesssim a\) are true.
We first give the scaling properties of the quadrature weights and SBP operators.
\begin{lem}[Scaling properties of SBP operators]
\label{lem:scaling-sbp}
    Assume that the mesh is shape regular and quasi-uniform; the following estimates
    hold uniformly for all \(K\in\mathcal T_h\) and all faces \(\gamma\in \mathcal{E}_K\):
    \[
        \omega_j^K \sim h_K^d,\quad
        \tau_s^\gamma\sim h_K^{d-1}.
    \]
    Consequently, for any volume nodal vector \(\vec{\mathbf w}^K\),
    \[
        \|\vec{\mathbf w}^K\|_{h,K}^2
        \sim h_K^d\|\vec{\mathbf w}^K\|_{\ell^2,K}^2.
    \]
    For SBP operators, for any \(1\le q\le\infty\),
    \[
        \|\mathbf D_m^K\|_{\ell^q}\lesssim h_K^{-1},
        \quad
        \|\mathbf M^K\|_{\ell^q}\lesssim h_K^{d},
        \quad
        \|\mathbf R^{\gamma,K}\|_{\ell^q}\lesssim 1,
        \quad
        \|\mathbf Q^K\|_{\ell^q} \lesssim 1.
    \]
    Here the operator norms are induced by the corresponding nodal
    \(\ell^q\)-norms. In addition, since \(\mathbf Q^K\) is an
    \(\mathbf M^K\)-orthogonal projection,
    \[
        \|\mathbf Q^K\vec{\mathbf w}^{\,K}\|_{h,K}
        \leq
        \|\vec{\mathbf w}^{\,K}\|_{h,K}.
    \]
\end{lem}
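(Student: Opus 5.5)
The plan is to reduce every claim to the reference element through the affine relations \eqref{eq:affine-map-sbp} of Remark~\ref{rem:affine-transformation-sbp}. On $\widehat K$ the quantities involved are fixed: there are finitely many positive weights $\omega_j^{\widehat K}$ and $\tau_s^{\widehat\gamma}$, and finitely many fixed matrices $D_\alpha^{\widehat K}$, $R^{\widehat\gamma,\widehat K}$, $Q^{\widehat K}$. Each is bounded above and below, or in operator norm, by an $h$-independent constant. All $h$-dependence therefore enters through $J_K$, $J_{\gamma,K}$ and $A_K^{-1}$.

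First, I will use shape regularity and quasi-uniformity in the standard forms $\|A_K\|\lesssim h_K$, $\|A_K^{-1}\|\lesssim \rho_K^{-1}\lesssim h_K^{-1}$ and $J_K=|\det A_K|\sim h_K^d$. Then $\omega_j^K=J_K\omega_j^{\widehat K}\sim h_K^d$ follows because $\min_j\omega_j^{\widehat K}>0$.

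For faces, $J_{\gamma,K}$ is the ratio $|\gamma|/|\widehat\gamma|$. This can be checked via the cofactor formula, or bounded directly: $J_K|A_K^{-T}\widehat{\mathbf n}|\le J_K\|A_K^{-1}\|\lesssim h_K^{d-1}$. Conversely, $|A_K^{-T}\widehat{\mathbf n}|\ge \|A_K^{T}\|^{-1}\gtrsim h_K^{-1}$, so $J_{\gamma,K}\sim h_K^{d-1}$, and hence $\tau_s^\gamma\sim h_K^{d-1}$.

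The norm equivalence is then immediate. We have $\|\vec{\mathbf w}^K\|_{h,K}^2=\sum_j\omega_j^K|\mathbf w_j^K|^2$, and the uniform two-sided bound on $\omega_j^K$ gives $\sim h_K^d\|\vec{\mathbf w}^K\|_{\ell^2,K}^2$.

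For the operator norms, the Kronecker factor $I_n$ does not change $\ell^q$ norms built from Euclidean norms of blocks beyond an $n$-dependent constant. It therefore suffices to treat the scalar matrices. Since all dimensions $N_{Q,k}$ and $N_{B,k}$ are fixed, every $\ell^q$ operator norm of a fixed matrix is bounded by a constant, for instance by the maximal entry times the dimension.

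With this reduction each bound follows from \eqref{eq:affine-map-sbp}:
\begin{itemize}
\item $\|M^K\|_{\ell^q}=J_K\|M^{\widehat K}\|_{\ell^q}\lesssim h_K^d$;
\item $R^{\gamma,K}=R^{\widehat\gamma,\widehat K}$ and $Q^K=Q^{\widehat K}$ give $O(1)$ bounds;
\item $\|D_m^K\|_{\ell^q}\le\sum_\alpha|(A_K^{-T})_{m\alpha}|\,\|D_\alpha^{\widehat K}\|_{\ell^q}\lesssim\|A_K^{-1}\|\lesssim h_K^{-1}$.
\end{itemize}

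Finally, the contraction property follows from \eqref{eq:QK-ortho}. Because $Q^K$ is idempotent and $M^KQ^K$ is symmetric, $Q^K$ is the orthogonal projection in the inner product $\langle x,y\rangle=x^TM^Ky$. Pythagoras gives $\|\vec{\mathbf w}^K\|_{h,K}^2=\|Q^K\vec{\mathbf w}^K\|_{h,K}^2+\|(I-Q^K)\vec{\mathbf w}^K\|_{h,K}^2$. The cross term vanishes because $(Q^K x)^TM^K(I-Q^K)x=x^T M^KQ^K(I-Q^K)x=0$, using $(M^KQ^K)^T=M^KQ^K$ and $Q^K(I-Q^K)=0$. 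The same holds blockwise for $\mathbf Q^K$.

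The main point requiring care is the two-sided bound on $J_{\gamma,K}$ and $\|A_K^{-1}\|$. This is exactly where shape regularity, $h_K/\rho_K\lesssim1$, is essential. Quasi-uniformity is used only so that the constants, stated in terms of $h_K$, are uniform over the mesh. Everything else is finite-dimensional bookkeeping on $\widehat K$.
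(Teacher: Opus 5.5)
Your proposal is correct and follows essentially the same route as the paper. Like the paper, you reduce everything to the reference element via the affine relations \eqref{eq:affine-map-sbp}, use the shape-regularity bounds on $A_K$, $A_K^{-1}$ and $\det A_K$, and get the contraction of $\mathbf Q^K$ from the Pythagorean identity implied by \eqref{eq:QK-ortho}. You also supply details the paper leaves implicit, namely the two-sided bound on $J_{\gamma,K}$ and the vanishing of the cross term, and your Kronecker reduction via block-norm equivalence is interchangeable with the paper's entrywise bound $|(\mathbf A\vec{\mathbf z})_i|\le\sum_j|A_{ij}|\,|\mathbf z_j|$.
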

The proof of Lemma~\ref{lem:scaling-sbp} is given in the appendix; 
see Section~\ref{app:lem-scaling}.
We next recall the following inverse estimates \cite{1978-PG-FEM}
for piecewise polynomials.
\begin{lem}[Inverse inequalities]\label{lem:inverse-inequalities}
There exists a constant \(C>0\), independent of \(h\), such that for any
\(\mathbf{v}_h \in \mathbf{W}_h^k\),
\[
    |\mathbf{v}_h|_{H^1} \lesssim h^{-1}\|\mathbf{v}_h\|_{L^2},\quad
    \|\mathbf{v}_h\|_{L^\infty} \lesssim h^{-d/2}\|\mathbf{v}_h\|_{L^2}.
\]
\end{lem}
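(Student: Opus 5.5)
The plan is to prove both inequalities on the reference simplex $\widehat K$ and then transfer them to each $K\in\mathcal T_h$ by the affine map \eqref{eq:affine-map}, using shape regularity and quasi-uniformity of $\{\mathcal T_h\}$. Fix $K$ and write $\mathbf v_h^K=\hat{\mathbf v}\circ F_K^{-1}$ with $\hat{\mathbf v}\in[\mathcal P^k(\widehat K)]^n$. The space $[\mathcal P^k(\widehat K)]^n$ is finite dimensional, and on it $|\cdot|_{H^1(\widehat K)}$ and $\|\cdot\|_{L^\infty(\widehat K)}$ are continuous with respect to the norm $\|\cdot\|_{L^2(\widehat K)}$. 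Equivalence of norms (or compactness of the unit sphere) therefore gives a constant $\widehat C$, depending only on $k$, $n$, $d$ and $\widehat K$, such that
\[
|\hat{\mathbf v}|_{H^1(\widehat K)}\le \widehat C\|\hat{\mathbf v}\|_{L^2(\widehat K)},
\qquad
\|\hat{\mathbf v}\|_{L^\infty(\widehat K)}\le \widehat C\|\hat{\mathbf v}\|_{L^2(\widehat K)} .
\]

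Next come the standard change-of-variables scalings. Since $J_K=|\det A_K|\sim h_K^d$, we have $\|\mathbf v_h^K\|_{L^2(K)}^2=J_K\|\hat{\mathbf v}\|_{L^2(\widehat K)}^2$. The chain rule gives $\nabla\mathbf v_h^K=(A_K^{-1})^T\widehat\nabla\hat{\mathbf v}$, and shape regularity gives $\|A_K^{-1}\|\lesssim \rho_K^{-1}\lesssim h_K^{-1}$. Combining these,
\[
|\mathbf v_h^K|_{H^1(K)}\lesssim h_K^{-1}J_K^{1/2}|\hat{\mathbf v}|_{H^1(\widehat K)}\lesssim h_K^{-1}\|\mathbf v_h^K\|_{L^2(K)} .
\]
The $L^\infty$ norm is invariant under the map, so
\[
\|\mathbf v_h^K\|_{L^\infty(K)}=\|\hat{\mathbf v}\|_{L^\infty(\widehat K)}\lesssim J_K^{-1/2}\|\mathbf v_h^K\|_{L^2(K)}\lesssim h_K^{-d/2}\|\mathbf v_h^K\|_{L^2(K)} .
\]

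Finally we globalize. Quasi-uniformity gives $h_K\sim h$ uniformly in $K$. Squaring the local $H^1$ bound and summing over $K\in\mathcal T_h$ yields $|\mathbf v_h|_{H^1}\lesssim h^{-1}\|\mathbf v_h\|_{L^2}$, where $|\cdot|_{H^1}$ is the broken seminorm, which is the natural one for $\mathbf W_h^k$. For $L^\infty$ we take the element $K$ attaining the maximum and use $\|\mathbf v_h^K\|_{L^2(K)}\le\|\mathbf v_h\|_{L^2}$.

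The only point needing care is uniformity of the constants: the reference constant $\widehat C$ is independent of $h$, and the bounds $\|A_K^{-1}\|\lesssim h_K^{-1}$ and $J_K\sim h_K^d$ rest on shape regularity, while the passage from $h_K$ to $h$ in the $L^\infty$ estimate rests on quasi-uniformity. This is exactly where the hypotheses on $\mathcal T_h$ enter; everything else is routine, and the argument is the classical one in \cite{1978-PG-FEM}.
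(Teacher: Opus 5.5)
Your proof is correct: it is the classical reference-element scaling argument of \cite{1978-PG-FEM}, which is all the paper relies on, since it states this lemma without proof and only cites that reference. One small point: quasi-uniformity ($h_K\gtrsim h$) is needed when you globalize the $H^1$ bound, not only the $L^\infty$ bound, and your argument does in fact use it there when you sum the local estimates.
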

The next lemma gives the local truncation estimate of the ESDG method, see \cite{2017-ChenShu-JCP}.

\begin{lem}[Local truncation errors of the ESDG method]\label{lem:local-truncation-error}
Assume that the exact solution \(\mathbf u\), physical fluxes \(\mathbf{f}_m\) and the entropy conservative fluxes \(\mathbf f_{m,S}\) are sufficiently smooth, and \(\hat{\mathbf{f}}_m\) are locally Lipschitz continuous with respect to both arguments. Let
\begin{align*}
    \boldsymbol\tau_j^K(\mathbf u)
    :=&  \frac{d \mathbf u_j^K}{dt}
    +2\sum_{m=1}^d\sum_{\ell=1}^{N_{Q,k}}
    D_{m,j\ell}^K \mathbf f_{m,S}(\mathbf u_j^K,\mathbf u_\ell^K) \\
    &-\sum_{\gamma\in\mathcal E_K}\sum_{s=1}^{N_{B,k}}
    R_{sj}^{\gamma,K}\frac{\tau_s^\gamma}{\omega_j^K}
    \bigl(
        \mathbf f_{\mathbf n}(\mathbf u_s^{\gamma,K})
        -\hat{\mathbf f}_{\mathbf n}
        (\mathbf u_s^{\gamma,K},\mathbf u_s^{\gamma,\tilde{K}})
    \bigr),
\end{align*}
for \(j=1,\ldots,N_{Q,k}.\) Then we have
\begin{equation*}
    \|\vec{\boldsymbol\tau}^K(\mathbf u)\|_{\ell^q,K}\lesssim h^k,
    \quad 1\le q\le\infty,
\end{equation*}
where $\vec{\boldsymbol\tau}^K(\mathbf u)=
\begin{bmatrix}
    \boldsymbol\tau_1^K(\mathbf u)^T,\ldots,
    \boldsymbol\tau_{N_{Q,k}}^K(\mathbf u)^T
\end{bmatrix}^T$.
\end{lem}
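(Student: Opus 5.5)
We must bound the local truncation error $\boldsymbol\tau_j^K(\mathbf u)$ obtained by inserting the nodal values of the exact solution into the scheme. Since $\mathbf u$ solves \eqref{eq:conservation-laws-a} pointwise, we replace $d\mathbf u_j^K/dt$ by $-\sum_m \partial_{x_m}\mathbf f_m(\mathbf u)(\mathbf x_j^K)$. Then $\boldsymbol\tau_j^K$ splits into a volume part and a surface part. The volume part is
\[
\mathbf T_{1,j}=\sum_m\Bigl(2\sum_\ell D^K_{m,j\ell}\mathbf f_{m,S}(\mathbf u_j^K,\mathbf u_\ell^K)-\partial_{x_m}\mathbf f_m(\mathbf u)(\mathbf x_j^K)\Bigr),
\]
and the surface part $\mathbf T_{2,j}$ is the lifted interface term. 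We bound each in $\ell^\infty$ by $O(h^k)$. Because $N_{Q,k}$ is fixed, all $\ell^q$ norms on $K$ are then equivalent up to constants independent of $h$, which gives the claim for every $1\le q\le\infty$.

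For the surface part we use that $\mathbf u$ is smooth and hence continuous across $\gamma$, so $\mathbf u_s^{\gamma,K}=\mathbf u_s^{\gamma,\tilde K}$. Consistency of $\hat{\mathbf f}_{\mathbf n}$ then gives $\hat{\mathbf f}_{\mathbf n}(\mathbf u_s,\mathbf u_s)=\mathbf f_{\mathbf n}(\mathbf u_s)$, so $\mathbf T_2\equiv 0$ exactly. The Lipschitz assumption is needed only later, in the error equation. Because the surface nodes are collocated, $R^{\gamma,K}$ is a restriction and no extrapolation error enters.

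For the volume part, fix the node $j$ and consider $\ell\mapsto \mathbf g_j(\mathbf x):=\mathbf f_{m,S}(\mathbf u_j^K,\mathbf u(\mathbf x))$, a smooth function on $K$. Let $\pi_k\mathbf g_j$ be its degree-$k$ Taylor polynomial about $\mathbf x_j^K$; its nodal remainder is $O(h^{k+1})$ in $\ell^\infty$. Exactness \eqref{eq: exact-for-nodal-SBP} gives $(D_m^K\vec{\pi_k\mathbf g_j})_j=\partial_{x_m}\pi_k\mathbf g_j(\mathbf x_j^K)=\partial_{x_m}\mathbf g_j(\mathbf x_j^K)$. The scaling bound $\|\mathbf D_m^K\|_{\ell^\infty}\lesssim h^{-1}$ from Lemma~\ref{lem:scaling-sbp} then yields
\[
\Bigl|\sum_\ell D^K_{m,j\ell}\mathbf g_j(\mathbf x_\ell^K)-\partial_{x_m}\mathbf g_j(\mathbf x_j^K)\Bigr|\lesssim h^{-1}h^{k+1}=h^k.
\]
It remains to identify $2\partial_{x_m}\mathbf g_j(\mathbf x_j^K)=2\,\partial_{\mathbf u_R}\mathbf f_{m,S}(\mathbf u_j,\mathbf u_j)\,\partial_{x_m}\mathbf u(\mathbf x_j^K)$.

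This identification is the key step: we show $2\,\partial_{\mathbf u_R}\mathbf f_{m,S}(\mathbf u,\mathbf u)=\mathbf f_m'(\mathbf u)$. Differentiate the consistency relation $\mathbf f_{m,S}(\mathbf u,\mathbf u)=\mathbf f_m(\mathbf u)$ to get $\partial_L\mathbf f_{m,S}+\partial_R\mathbf f_{m,S}=\mathbf f_m'$ on the diagonal. Symmetry $\mathbf f_{m,S}(\mathbf a,\mathbf b)=\mathbf f_{m,S}(\mathbf b,\mathbf a)$ gives $\partial_L\mathbf f_{m,S}=\partial_R\mathbf f_{m,S}$ there. Hence each partial derivative equals $\tfrac12\mathbf f_m'$, and $\mathbf T_{1,j}=O(h^k)$.

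The constants depend on bounds of derivatives up to order $k+1$ of $\mathbf u$, $\mathbf f_m$ and $\mathbf f_{m,S}$ on a compact set of states. They also depend on the reference operators, which are uniform over the mesh by the affine scaling \eqref{eq:affine-map-sbp}.

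I expect the main subtlety to be precisely this nonstandard consistency mechanism. The nodal difference operator is applied to a $j$-dependent function, so the Taylor argument must be carried out node by node with remainder constants uniform in $j$. The factor $2$ must then be accounted for through the symmetry of the entropy conservative flux. Note that entropy conservation itself is not needed for the truncation bound; only consistency, symmetry and smoothness are used.
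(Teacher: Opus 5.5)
Your proof is correct and follows the standard consistency argument of Chen and Shu, to which the paper defers without giving a proof of its own: the interface term vanishes identically by continuity of $\mathbf u$ and consistency of $\hat{\mathbf f}_{\mathbf n}$, and the flux-differencing term is handled node by node by applying the degree-$k$ exactness of $D_m^K$ to $\mathbf x\mapsto\mathbf f_{m,S}(\mathbf u_j^K,\mathbf u(\mathbf x))$ together with $2\,\partial_{\mathbf u_R}\mathbf f_{m,S}(\mathbf u,\mathbf u)=\mathbf f_m'(\mathbf u)$, which follows from symmetry and consistency. The only caveat is the trivial case $k=0$: there $\partial_{x_m}\pi_0\mathbf g_j=0$, so your identification step fails, but the required $O(1)$ bound follows directly from $D_m^K\vec 1^K=\vec 0$ and $\|D_m^K\|_{\ell^\infty}\lesssim h^{-1}$.
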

The next lemma, intuitively, 
measures the difference between the central flux and the entropy conservative flux.

\begin{lem}\label{lem:Phi-structure}
Define
\[
    \mathbf f_{m,c}(\mathbf a,\mathbf b)
    :=
    \frac{\mathbf f_m(\mathbf a)+\mathbf f_m(\mathbf b)}{2},
    \quad
    \boldsymbol\Phi_m(\mathbf a,\mathbf b)
    :=
    \mathbf f_{m,S}(\mathbf a,\mathbf b)-\mathbf f_{m,c}(\mathbf a,\mathbf b),
    \quad \mathbf a,\mathbf b\in\mathbb R^n.
\]
Assume that \(\mathbf{f}_{m}\in C^3(\mathbb{R}^n)\), 
\(\mathbf{f}_{m,S}\in C^3(\mathbb{R}^n \times \mathbb{R}^n)\) 
and \(\mathbf{f}_{m,S}\) is the symmetric and consistent flux. 
For any convex compact set \( S \subset \mathbb R^n\), 
if \(\mathbf a,\mathbf b,\mathbf c,\mathbf d\in S\) and satisfy
\[
    |\mathbf a-\mathbf b|+|\mathbf c-\mathbf d|\lesssim h,
\]
then
\begin{equation*}
    |\boldsymbol\Phi_m(\mathbf a,\mathbf b)-\boldsymbol\Phi_m(\mathbf c,\mathbf d)|
    \lesssim h\bigl(|\mathbf a-\mathbf c|+|\mathbf b-\mathbf d|\bigr).
\end{equation*}
\end{lem}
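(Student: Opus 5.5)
The plan is to show that $\boldsymbol\Phi_m$ vanishes to second order on the diagonal. Then its first derivatives are $O(h)$ near pairs of nearby points, and the mean value theorem gives the estimate.

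\textbf{Step 1: structure on the diagonal.} Consistency $\mathbf f_{m,S}(\mathbf a,\mathbf a)=\mathbf f_m(\mathbf a)$ gives $\boldsymbol\Phi_m(\mathbf a,\mathbf a)=0$ for all $\mathbf a$. Symmetry gives $\boldsymbol\Phi_m(\mathbf a,\mathbf b)=\boldsymbol\Phi_m(\mathbf b,\mathbf a)$, and hence $\partial_1\boldsymbol\Phi_m(\mathbf a,\mathbf a)=\partial_2\boldsymbol\Phi_m(\mathbf a,\mathbf a)$. Differentiating the identity $\boldsymbol\Phi_m(\mathbf a,\mathbf a)=0$ along the diagonal gives $\partial_1\boldsymbol\Phi_m(\mathbf a,\mathbf a)+\partial_2\boldsymbol\Phi_m(\mathbf a,\mathbf a)=0$. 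Combining the two, $\partial_1\boldsymbol\Phi_m(\mathbf a,\mathbf a)=\partial_2\boldsymbol\Phi_m(\mathbf a,\mathbf a)=0$. The same conclusion for $\mathbf f_{m,S}$ is the standard identity $\partial_i\mathbf f_{m,S}(\mathbf a,\mathbf a)=\tfrac12\mathbf f_m'(\mathbf a)$, and $\mathbf f_{m,c}$ has the same diagonal derivatives.

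\textbf{Step 2: derivative bounds near the diagonal.} Since $\mathbf f_m,\mathbf f_{m,S}\in C^3$, the map $\boldsymbol\Phi_m$ is $C^2$ at least. Its second derivatives are therefore bounded on $S\times S$ by a constant depending only on $S$; convexity of $S$ keeps every segment we use inside $S$. For $(\mathbf x,\mathbf y)\in S\times S$, Taylor expansion of $\partial_i\boldsymbol\Phi_m$ about the diagonal point $(\mathbf x,\mathbf x)$, using Step 1, gives
\[
|\partial_i\boldsymbol\Phi_m(\mathbf x,\mathbf y)|=|\partial_i\boldsymbol\Phi_m(\mathbf x,\mathbf y)-\partial_i\boldsymbol\Phi_m(\mathbf x,\mathbf x)|\lesssim|\mathbf x-\mathbf y|,\qquad i=1,2.
\]

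\textbf{Step 3: mean value argument.} Set $\mathbf z(\theta)=(\mathbf c,\mathbf d)+\theta\bigl((\mathbf a,\mathbf b)-(\mathbf c,\mathbf d)\bigr)$ for $\theta\in[0,1]$; this segment lies in $S\times S$. Write
\[
\boldsymbol\Phi_m(\mathbf a,\mathbf b)-\boldsymbol\Phi_m(\mathbf c,\mathbf d)=\int_0^1\Bigl[\partial_1\boldsymbol\Phi_m(\mathbf z(\theta))(\mathbf a-\mathbf c)+\partial_2\boldsymbol\Phi_m(\mathbf z(\theta))(\mathbf b-\mathbf d)\Bigr]\,d\theta.
\]
The two components of $\mathbf z(\theta)$ differ by $\theta(\mathbf a-\mathbf b)+(1-\theta)(\mathbf c-\mathbf d)$, whose norm is at most $|\mathbf a-\mathbf b|+|\mathbf c-\mathbf d|\lesssim h$. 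Step 2 then gives $|\partial_i\boldsymbol\Phi_m(\mathbf z(\theta))|\lesssim h$, and therefore $|\boldsymbol\Phi_m(\mathbf a,\mathbf b)-\boldsymbol\Phi_m(\mathbf c,\mathbf d)|\lesssim h\bigl(|\mathbf a-\mathbf c|+|\mathbf b-\mathbf d|\bigr)$.

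\textbf{Main obstacle.} The only delicate point is Step 1: showing that the first derivatives of $\boldsymbol\Phi_m$ vanish on the diagonal. This uses symmetry and consistency only, not the entropy conservation property. Everything after that is routine, given uniform $C^2$ bounds on the compact set $S\times S$.
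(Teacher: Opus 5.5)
Your proposal is correct and follows essentially the same route as the paper. Both arguments show, from consistency and symmetry alone, that the first derivatives of $\boldsymbol\Phi_m$ vanish on the diagonal; both then bound them by $|\mathbf a-\mathbf b|$ using bounded second derivatives on the compact convex set, and finish with the mean-value integral along the segment from $(\mathbf c,\mathbf d)$ to $(\mathbf a,\mathbf b)$. The only cosmetic difference is that the paper works in the coordinates $(\mathbf a+\mathbf b)/2$, $(\mathbf a-\mathbf b)/2$, where symmetry becomes evenness in the second variable, while you expand directly about a diagonal point.
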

The proof of Lemma~\ref{lem:Phi-structure} is given in the appendix; 
see Section~\ref{app:lem-phi-structure}.

\section{Scalar conservation laws}
\label{sec:scalar-case}

In this section, we establish the error estimate of the ESDG scheme for the scalar 
conservation laws, which reads
\begin{equation}
\label{eq:scalar-esdg-scheme}
    M^K\frac{d\vec{u}_h^K}{dt}
    +
    2\sum_{m=1}^d
    \bigl(
       S_m^K\circ F_{m,S}(\vec{u}_h^K,\vec{u}_h^K)
    \bigr)\vec{1}^K
    =
    \sum_{\gamma\in\mathcal E_K}
    ( R^{\gamma,K})^T B ^\gamma
    \left(
        \vec{f}_{\mathbf n}^{\gamma,K}
        -
        \vec{\hat{f}}_{\mathbf n}^{\gamma,K}
    \right).
\end{equation}
Before proceeding to the analysis, 
we point out a distinction between the ESDG method and the classical DG method. 
In a classical DG method, the numerical solution is naturally a piecewise polynomial. 
In contrast, the ESDG scheme \eqref{eq:scalar-esdg-scheme} 
is formulated as an evolution equation for nodal vectors, 
which are not automatically associated with functions in a prescribed polynomial space.
To employ standard polynomial approximation and inverse estimates, 
we assume that each nodal vector admits a polynomial reconstruction.

\begin{assum}
\label{ass:polynomial-space}
For each element \(K\in\mathcal T_h\), there exists a finite-dimensional polynomial space \(V_h(K)\) satisfying
\[
\mathcal P^k(K)\subset V_h(K)\subset\mathcal P^r(K),
\]
where \(r\geq k\) is fixed and independent of \(h\), such that the nodal evaluation map
\[
\mathcal N_K:V_h(K)\to\mathbb R^{N_{Q,k}},
\quad
\mathcal N_K w
:=
\bigl(w(\mathbf x_1^K),\ldots,w(\mathbf x_{N_{Q,k}}^K)\bigr)^T,
\]
is an isomorphism. Hence, every nodal vector is identified with its unique polynomial reconstruction in \(V_h(K)\).
\end{assum}

We also introduce the following a priori bound, 
which is a standard auxiliary assumption in nonlinear DG error analysis 
for nonlinear conservation laws \cite{2004-ZhangShu-SINUM}.

\begin{assum}
\label{ass:a-priori}
Let \(u_h\) denote the polynomial reconstruction provided by Assumption~\ref{ass:polynomial-space}. The numerical solution satisfies
\[
\|u-u_h\|_{L^\infty}\leq h,
\quad 0\le t\le T.
\]
\end{assum}

We first prove the error estimate under Assumptions~\ref{ass:polynomial-space} and~\ref{ass:a-priori}. 
A further discussion and justification of these assumptions will be given 
in Subsection~\ref{subsec:discussion-assumptions}.

\subsection{Error Estimate for the ESDG scheme}

\begin{thm}\label{thm:scalar-esdg}
Given a uniform convex entropy function \(U(u)\), assume that the exact solution \(u\), 
the physical fluxes \(f_m\), the entropy conservative fluxes \(f_{m,S}\), 
are sufficiently smooth, and the entropy stable fluxes \(\hat f_{\mathbf n}\) are 
locally Lipschitz continuous with respect to both arguments.
Suppose that Assumption~\ref{ass:a-priori} and~\ref{ass:polynomial-space} hold on \([0,T]\)
and that the initial error satisfies
\begin{equation*}
    \| u-u_h\|_h\lesssim h^k,\quad t=0.
\end{equation*}
Then the solution of \eqref{eq:scalar-esdg-scheme} satisfies
\begin{equation*}
    \|u-u_h\|_h\lesssim h^k,
    \quad 0\le t\le T.
\end{equation*}
\end{thm}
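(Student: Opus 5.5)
We argue by consistency and stability at the nodal level. Set $\vec e^K:=\vec u^K-\vec u_h^K$, where $\vec u^K$ collects the exact solution at the nodes. Subtracting \eqref{eq:scalar-esdg-scheme} from the scalar version of the identity that defines $\vec{\tau}^K(u)$ in Lemma~\ref{lem:local-truncation-error} (multiplied by $M^K$), we obtain the error equation
\[
M^K\frac{d\vec e^K}{dt}+2\sum_{m}\Bigl[\bigl(S_m^K\circ F_{m,S}(\vec u^K,\vec u^K)\bigr)\vec 1-\bigl(S_m^K\circ F_{m,S}(\vec u_h^K,\vec u_h^K)\bigr)\vec 1\Bigr]
=\sum_{\gamma}(R^{\gamma,K})^TB^\gamma\bigl(\Delta\vec f_{\mathbf n}-\Delta\vec{\hat f}_{\mathbf n}\bigr)+M^K\vec\tau^K .
\]
We then test with $\vec e^K$ and sum over $K$. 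The truncation term is bounded by Cauchy--Schwarz and Lemmas~\ref{lem:scaling-sbp}--\ref{lem:local-truncation-error}:
\[
\sum_K(\vec e^K)^TM^K\vec\tau^K\le\|\vec e\|_h\Bigl(\sum_K h^d\|\vec\tau^K\|_{\ell^2,K}^2\Bigr)^{1/2}\lesssim\|\vec e\|_h\,h^k,
\]
where the last step uses $\sum_K h^d\lesssim|\Omega|$ by quasi-uniformity. The goal is to reach
\[
\tfrac12\tfrac{d}{dt}\|\vec e\|_h^2\lesssim\|\vec e\|_h^2+h^{2k},
\]
after which Gronwall together with the initial bound gives the claim.

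\textbf{Volume terms.} Split $f_{m,S}=f_{m,c}+\Phi_m$. The central part gives $\bigl(S_m^K\circ F_{m,c}(\vec w,\vec w)\bigr)\vec 1=\tfrac12 S_m^K\vec f_m(\vec w)+\tfrac12\operatorname{diag}(S_m^K\vec 1)\vec f_m(\vec w)$, and $S_m^K\vec 1=0$ by exactness. So the central contribution is $(\vec e^K)^TS_m^K\bigl(\vec f_m(\vec u)-\vec f_m(\vec u_h)\bigr)$. Linearize: $\vec f_m(\vec u)-\vec f_m(\vec u_h)=\operatorname{diag}(f_m'(\vec u))\vec e-\vec r$, where $|\vec r|\lesssim|\vec e|^2$. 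For the leading part, write $f_m'(u_j)=f_m'(\bar u_K)+O(h)$ with $\bar u_K$ a cell constant. The constant piece gives $f_m'(\bar u_K)(\vec e)^TS_m^K\vec e=\tfrac12 f_m'(\bar u_K)\sum_\gamma n_m\langle e,e\rangle_{\gamma,\tau}$ by \eqref{eq: nodal-SBP}, which is a pure boundary term. The $O(h)$ variation piece, combined with $\|S_m^K\|\lesssim h^{d-1}$, is $\lesssim\|\vec e^K\|_{h,K}^2$. The quadratic remainder is bounded by $h^{d-1}\|\vec e\|_{\ell^\infty}\|\vec e\|_{\ell^2}^2\lesssim\|\vec e\|_{h,K}^2$, using Assumption~\ref{ass:a-priori}: it gives $\|\vec e\|_{\ell^\infty}\le h$, since the nodal values of $u_h$ are those of its reconstruction from Assumption~\ref{ass:polynomial-space}. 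For the $\Phi_m$ part, Lemma~\ref{lem:Phi-structure} applies with $|u_j-u_\ell|\lesssim h$, and with $|u_{h,j}-u_{h,\ell}|\lesssim h$, which follows from the a priori bound. It yields $|\Phi_m(u_j,u_\ell)-\Phi_m(u_{h,j},u_{h,\ell})|\lesssim h(|e_j|+|e_\ell|)$. Paired with $|S_{m,j\ell}|\lesssim h^{d-1}$, this gives $\lesssim\|\vec e^K\|_{h,K}^2$.

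\textbf{Interface terms.} Combine the boundary terms $\tfrac12\sum f_m'(\bar u_K)n_m\langle e,e\rangle$ produced above with $\sum_\gamma\langle e^{\gamma,K},\Delta f_{\mathbf n}-\Delta\hat f_{\mathbf n}\rangle$. Summing both sides of each face gives, at each face node, a quantity of the form $-[e]\bigl(\hat f(u^-,u^+)-\hat f(u_h^-,u_h^+)\bigr)+\tfrac12 f'(\cdot)[e^2]$ plus $O(h|e|^2)$ terms. Here we use that $\hat f(u,u)=f(u)$ is single valued, and that the nodal/cell-constant mismatches $f'(u_s)-f'(\bar u_K)=O(h)$ are absorbed through $\tau_s\lesssim h^{d-1}$ and the trace bound $\|R\|\lesssim1$.

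This face step is the main obstacle. Entropy stability of $\hat f$ yields a dissipative structure only for the entropy-variable jump. To exploit it I would Taylor expand $\hat f$ around $(u,u)$; since $u$ is smooth, the exact-solution flux is $\hat f(u^-,u^+)=f(u)$. This gives
\[
\hat f(u_h^-,u_h^+)=f(u)+\partial_1\hat f\,(u_h^--u)+\partial_2\hat f\,(u_h^+-u)+O(|e|^2).
\]
Entropy stability implies that the symmetric part of the linearization is dissipative, in the sense that $\partial_1\hat f-\partial_2\hat f\ge0$ (in the $H$-weighted sense for systems) with $\partial_1\hat f+\partial_2\hat f=f'$. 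This makes the quadratic face form nonpositive, up to $O(h)$ perturbations controlled as above. Should local Lipschitz continuity be insufficient for differentiability, I would instead use the Lipschitz bound directly, estimating $|[e]|\,|e^\pm|\tau_s$ by the trace inequality $\tau_s|e_s|^2\lesssim h^{-1}\|\vec e^K\|_{h,K}^2$. That route would cost $h^{-1}$, so the differentiable-flux dissipation argument is the one to pursue. Collecting all terms gives the differential inequality, and Gronwall concludes.
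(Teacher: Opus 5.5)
Your treatment of the truncation term and of the volume terms is sound. For scalar equations, testing with $\vec e^K$ works there and even spares you the term $\frac12(\vec e^K)^T\frac{dH^K}{dt}M^K\vec e^K$. The gap is the interface step, which you yourself flag as the main obstacle. Your dissipativity argument expands $\hat f_{\mathbf n}(u_h^-,u_h^+)$ about $(u,u)$ with an $O(|e|^2)$ remainder. That requires $\hat f_{\mathbf n}$ to be differentiable with locally Lipschitz derivative near the diagonal; differentiability is needed even to state $\partial_1\hat f-\partial_2\hat f\ge 0$. The theorem, however, assumes only local Lipschitz continuity of $\hat f_{\mathbf n}$. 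Your fallback, which uses only the Lipschitz bound, gives a face contribution of size $h^{-1}\|\vec e\|_h^2$, and Gronwall cannot absorb that uniformly in $h$. So, as written, you prove the theorem only under an extra smoothness hypothesis on the interface flux. The missing idea is to apply the entropy-stability inequality itself to the discrete states, not its linearization at $(u,u)$.

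The paper does this by testing the error equation with the entropy-variable error $\vec e_v^K=H^K\vec e^K$, i.e.\ by working in $\|\vec e\|_U$. It splits $S_m^K=A_m^K+\frac12\sum_{\gamma}n_m^{\gamma,K}E^{\gamma,K}$ and sums over the two sides of each face. The interface flux then enters exactly as $(v_{h,s}^{\gamma,K^+}-v_{h,s}^{\gamma,K^-})\,\hat f_{\mathbf n_\gamma}(u_{h,s}^{\gamma,K^-},u_{h,s}^{\gamma,K^+})$, because $v(u)$ is continuous across $\gamma$. Entropy stability bounds this by $\tilde\psi_{\mathbf n_\gamma}(u_{h,s}^{\gamma,K^+})-\tilde\psi_{\mathbf n_\gamma}(u_{h,s}^{\gamma,K^-})$ with no differentiation of $\hat f$. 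What remains is $G_{\gamma,s}(e^+)-G_{\gamma,s}(e^-)$ with $G_{\gamma,s}(0)=G_{\gamma,s}'(0)=G_{\gamma,s}''(0)=0$, hence $\lesssim h\,\tau_s(|e^-|^2+|e^+|^2)$. The price is twofold: the extra term with $\frac{dH^K}{dt}$, controlled through $|\partial_t u_{h,i}^K|\lesssim 1$ read off from the scheme, and freezing both $U''$ and $f_m'$ at a cell point so that skew-symmetry of $A_m^K$ removes the leading volume term.

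Alternatively, in the scalar case you can repair your own $L^2$ route without differentiability. At each face node, your summed face quantity equals $\tau_s$ times $(u_h^+-u_h^-)\bigl(\hat f_{\mathbf n}-f_{\mathbf n,S}\bigr)(u_h^-,u_h^+)+(u_h^+-u_h^-)\sum_m n_m\Phi_m(u_h^-,u_h^+)$, up to $O\bigl(h\,\tau_s(|e^-|^2+|e^+|^2)\bigr)$. Entropy stability gives $(v_h^+-v_h^-)(\hat f_{\mathbf n}-f_{\mathbf n,S})\le 0$, and since $v$ is strictly increasing, the first term is nonpositive. Lemma~\ref{lem:Phi-structure}, applied with $\mathbf c=\mathbf d=\mathbf a$, bounds the second term by $h\,|u_h^+-u_h^-|^2\le 2h(|e^-|^2+|e^+|^2)$. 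This variant does not extend to systems, which is why the entropy-weighted test is the natural choice.
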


\begin{proof}
For each element \(K\in\mathcal T_h\), define
\[
    \vec e^K:=\vec u^K-\vec u_h^K,\quad\vec e^K_v:=\vec v^K-\vec v_h^K,
\]
where $\vec v^K:= [v(u^K_1),\cdots,v(u^K_{N_{Q,k}})]^T,\vec v_h^K:= [v(u^K_{h,1}),\cdots,v(u^K_{h,N_{Q,k}})]^T$, 
and \(v(u)=U'(u)\) is the entropy variable. 
By Taylor's expansion we have
\(
   \vec e^K_v=H^K \vec e^K,
\)
where
\[
H^K:= diag(H_1^K,\cdots,H_{N_{Q,k}}^K), 
\quad H_i^K:= \int_{0}^1 U''(u^K_{h,i}+\theta(u^K_{i}-u^K_{h,i}))d\theta.
\]
Let
\[
    f_{m,c}(a,b):=\frac{f_m(a)+f_m(b)}{2},
    \quad
    \Phi_m(a,b):=f_{m,S}(a,b)-f_{m,c}(a,b).
\]
The corresponding matrices are defined entrywise by
\[
    \bigl(F_{m,c}(\vec w^K,\vec w^K)\bigr)_{ij}
    :=f_{m,c}(w_i^K,w_j^K),
    \quad
    \bigl(\Phi_m(\vec w^K,\vec w^K)\bigr)_{ij}
    :=\Phi_m(w_i^K,w_j^K).
\]
Since \(f_{m,S}=f_{m,c}+\Phi_m\), the scheme \eqref{eq:scalar-esdg-scheme} can be rewritten as
\begin{equation}
    \label{eq:scheme-split-general}
    \begin{aligned}
    M^K\frac{d\vec u_h^K}{dt}
    &+2\sum_{m=1}^d\bigl(S_m^K\circ F_{m,c}(\vec u_h^K,\vec u_h^K)\bigr)\vec 1^K
    +2\sum_{m=1}^d\bigl(S_m^K\circ \Phi_m(\vec u_h^K,\vec u_h^K)\bigr)\vec 1^K
    \\
    &={}\sum_{\gamma\in\mathcal E_K}(R^{\gamma,K})^T B^\gamma
    \bigl(\vec f_{\mathbf n}(\vec u_h^{\gamma,K})-
    \vec{\hat f}_{\mathbf n}(\vec u_h^{\gamma,K},\vec{u}_h^{\gamma,\tilde{K}})\bigr).
    \end{aligned}
\end{equation}
Using \(S_m^K\vec 1^K=\vec 0\), we have
\[
    2\bigl(S_m^K\circ F_{m,c}(\vec w^K,\vec w^K)\bigr)\vec 1^K
    =S_m^K\vec f_m(\vec w^K), \quad \forall \vec w^K \in \mathbb{R}^{N_{Q,k}}.
\]
By Lemma~\ref{lem:local-truncation-error}, the exact solution satisfies
\begin{equation}
    \label{eq:exact-split-general}
    \begin{aligned}
    M^K\frac{d\vec u^K}{dt}
    &+\sum_{m=1}^d S_m^K\vec f_m(\vec u^K)
    +2\sum_{m=1}^d\bigl(S_m^K\circ \Phi_m(\vec u^K,\vec u^K)\bigr)\vec 1^K
    \notag\\
    &=\sum_{\gamma\in\mathcal E_K}(R^{\gamma,K})^T B^\gamma
    \bigl(\vec f_{\mathbf n}(\vec u^{\gamma,K})-
    \vec{\hat f}_{\mathbf n}(\vec u^{\gamma,K},\vec{u}^{\gamma,\tilde{K}})\bigr)
    +\vec T^K(u),
\end{aligned}
\end{equation}
where $\vec T^K(u)=M^K \vec \tau^K(u)$. Subtracting \eqref{eq:scheme-split-general} from \eqref{eq:exact-split-general}, we get
\begin{align}
        M^K\frac{d\vec e^K}{dt}
    &+\sum_{m=1}^d S_m^K\Delta\vec f_m^K
    +2\sum_{m=1}^d\bigl(S_m^K\circ\Delta\Phi_m^K\bigr)\vec 1^K \notag\\
    &=\sum_{\gamma\in\mathcal E_K}(R^{\gamma,K})^T B^\gamma
    \bigl(\Delta\vec f_{\mathbf n}^{\gamma,K}
    -\Delta\vec{\hat f}_{\mathbf n}^\gamma\bigr)
    +\vec T^K(u).
    \label{eq:error-equation-general}
\end{align}
where
\begin{align*}
    &\Delta\vec f_m^K:=\vec f_m(\vec u^K)-\vec f_m(\vec u_h^K),
    \quad 
    \Delta\Phi_m^K:=\Phi_m(\vec u^K,\vec u^K)-\Phi_m(\vec u_h^K,\vec u_h^K),
    \\
    &\Delta\vec f_{\mathbf n}^{\gamma,K}
    :=\vec f_{\mathbf n}(\vec u^{\gamma,K})-\vec f_{\mathbf n}(\vec u_h^{\gamma,K}),
    \quad
    \Delta\vec{\hat f}_{\mathbf n}^\gamma
    :=\vec{\hat f}_{\mathbf n}(\vec u^{\gamma,K},\vec{u}^{\gamma,\tilde{K}})-
    \vec{\hat f}_{\mathbf n}(\vec u_h^{\gamma,K},\vec{u}_h^{\gamma,\tilde{K}}).
\end{align*}
Left multiplying \eqref{eq:error-equation-general} by \((\vec e^K_v)^T\), we obtain
    \begin{align}
    (\vec e^K)^T H^K M^K\frac{d\vec e^K}{dt}
    =&-\sum_{m=1}^d(\vec e^K)^T H^K S_m^K\Delta\vec f_m^K
    -2\sum_{m=1}^d(\vec e^K)^T H^K\bigl(S_m^K\circ\Delta\Phi_m^K\bigr)\vec 1^K \notag
    \\
    &+\sum_{\gamma\in\mathcal E_K}(\vec e^{\gamma,K}_v)^T B^\gamma
    \bigl(\Delta\vec f_{\mathbf n}^{\gamma,K}
    -\Delta\vec{\hat f}_{\mathbf n}^\gamma\bigr)
    +(\vec e^K)^T H^K \vec T^K(u), \label{eq:energy-before-split-general}
    \end{align}
where $\vec e^{\gamma,K}_v:=R^{\gamma,K}\vec e^K_v$.
We then decompose \(S_m^K\) into its symmetric and skew-symmetric parts:
\begin{equation}
    S_m^K=\frac12\bigl(S_m^K+(S_m^K)^T\bigr)
    +\frac12\bigl(S_m^K-(S_m^K)^T\bigr).
    \label{eq:S-decomposition-general}
\end{equation}
Let $A_m^K:=\frac12\bigl(S_m^K-(S_m^K)^T\bigr)$, by \eqref{eq: nodal-SBP} and \eqref{eq:S-decomposition-general}, we have
\begin{equation}
    (\vec e^K)^T H^K M^K\frac{d\vec e^K}{dt}
    =\mathcal S_1^K+\mathcal S_2^K+\mathcal S_3^K+\mathcal S_4^K,
    \label{eq:energy-init-general}
\end{equation}
where
\begin{align*}
    \mathcal S_1^K
    &:=\sum_{\gamma\in\mathcal E_K}(\vec e^{\gamma,K}_v)^T B^\gamma
    \left(\frac12\Delta\vec f_{\mathbf n}^{\gamma,K}
    -\Delta\vec{\hat f}_{\mathbf n}^\gamma\right),\quad
    \mathcal S_2^K
    :=-\sum_{m=1}^d(\vec e^K)^T H^K A_m^K\Delta\vec f_m^K,\\
    \mathcal S_3^K
    &:=-2\sum_{m=1}^d(\vec e^K)^T H^K\bigl(S_m^K\circ\Delta\Phi_m^K\bigr)\vec 1^K,\quad
    \mathcal S_4^K
    :=(\vec e^K)^T H^K \vec T^K(u).
\end{align*}
We then define 
\[
    \| \vec{w}^K \|_{U,K}^2:= 
    (\vec{w}^K)^T H^K M^K \vec{w}^K,\quad 
    \| \vec{w} \|_{U}^2=\sum_{K\in \mathcal{T}_h}\| \vec{w}^K \|_{U,K}^2.
\]
According to the uniform convexity of the entropy \(U\),
the \(\| \cdot \|_{U,K} \) is equivalent to \(\| \cdot \|_{h,K} \).
Then \eqref{eq:energy-init-general} becomes
\begin{align*}
    \frac12\frac{d}{dt}\|\vec e^K\|_{U,K}^2
    =\mathcal S_0^K+\mathcal S_1^K+\mathcal S_2^K+\mathcal S_3^K+\mathcal S_4^K,
\end{align*}
where
\[
    \mathcal S_0^K=\frac12 (\vec e^K)^T\frac{dH^K}{dt}M^K\vec e^K.
\]
Summing over all elements gives
\begin{equation}
    \frac12\frac{d}{dt}\|\vec e\|_U^2
    =\sum_{K\in\mathcal T_h}\mathcal S_0^K
    +\sum_{K\in\mathcal T_h}\mathcal S_1^K
    +\sum_{K\in\mathcal T_h}\mathcal S_2^K
    +\sum_{K\in\mathcal T_h}\mathcal S_3^K
    +\sum_{K\in\mathcal T_h}\mathcal S_4^K.
    \label{eq:global-energy-decomposition-general}
\end{equation}
We first estimate \(\sum_K\mathcal S_0^K\). In fact,
\begin{align*}
    \sum_{K \in\mathcal T_h}\mathcal S_0^K = 
    \frac12 \sum_{K \in\mathcal T_h} \sum_{i=1}^{N_{Q,k}} \omega_i^K \partial_t H_i^K (e^K_i)^2,
\end{align*}
where
\begin{align*}
    \partial_t H_i^K &=\int_{0}^1 U'''(u^K_{h,i}+\theta(u^K_{i}-u^K_{h,i}))
    ((1-\theta)\partial_t u^K_{h,i}+\theta \partial_t u^K_{i})d \theta, \\
    |\partial_t H_i^K| &\leq \sup_s | U'''(s)| \bigl( |\partial_t u^K_i|+ |\partial_t u^K_{h,i}| \bigr),
\end{align*}
according to \eqref{eq:scalar-esdg-scheme}, using the fact that \(D_m^K\vec 1^K= \vec 0\) again, 
we have
\begin{align*}
    \partial_t u_{h,i}^K  =
    &-2\sum_{m=1}^d\sum_{j=1}^{N_{Q,k}}D_{m,ij}^K f_{m,S}(u_{h,i}^K,u_{h,j}^K)  
    \\&+\sum_{\gamma\in\mathcal E_K}\sum_{s=1}^{N_{B,k}}R_{si}^{\gamma,K}\frac{\tau_s^\gamma}{\omega_i^K}
    \bigl(f_{\mathbf n}(u_{h,s}^{\gamma,K})-\hat f_{\mathbf n}(u_{h,s}^{\gamma,K},u_{h,s}^{\gamma,\tilde{K}})\bigr) \\
    = 
    &-2\sum_{m=1}^d\sum_{j=1}^{N_{Q,k}}D_{m,ij}^K \bigl( f_{m,S}(u_{h,i}^K,u_{h,j}^K)
    - f_{m,S}(u_{h,i}^K,u_{h,i}^K)\bigr)\\
    & +\sum_{\gamma\in\mathcal E_K}\sum_{s=1}^{N_{B,k}}R_{si}^{\gamma,K}\frac{\tau_s^\gamma}{\omega_i^K}
    \bigl(f_{\mathbf n}(u_{h,s}^{\gamma,K})-\hat f_{\mathbf n}(u_{h,s}^{\gamma,K},u_{h,s}^{\gamma,\tilde{K}})\bigr).\\
\end{align*}
By Lemma~\ref{lem:scaling-sbp} and the smoothness of the numerical flux, and Assumption~\ref{ass:a-priori},
we have
\begin{align*}
    |\partial_t u^K_{h,i}| \leq & 2\sum_{m=1}^d\sum_{j=1}^{N_{Q,k}}|D_{m,ij}^K| |\bigl( f_{m,S}(u_{h,i}^K,u_{h,j}^K)  - f_{m,S}(u_{h,i}^K,u_{h,i}^K)\bigr)| \\ 
    &+\sum_{\gamma\in\mathcal E_K}\sum_{s=1}^{N_{B,k}}|R_{si}^{\gamma,K} \frac{\tau_s^\gamma}{\omega_i^K}| 
    |\bigl(f_{\mathbf n}(u_{h,s}^{\gamma,K})-\hat f_{\mathbf n}(u_{h,s}^{\gamma,K},u_{h,s}^{\gamma,\tilde{K}})\bigr)|\\
    \lesssim & h^{-1} \sum_{j=1}^{N_{Q,k}}|u_{h,i}^K-u_{h,j}^K|+ h^{-1}\sum_{s=1}^{N_{B,k}}|u_{h,s}^{\gamma,K}-u_{h,s}^{\gamma,\tilde{K}}| 
    \lesssim 1.
\end{align*}
Thus, we have the estimate for \(\mathcal{S}_0^K\),
\begin{equation}
    \sum_{K \in\mathcal T_h}\mathcal S_0^K 
    \lesssim \sum_{K \in\mathcal T_h} \sum_{i=1}^{N_{Q,k}} \omega_i^K (e^K_i)^2 
    \lesssim  \sum_{K \in\mathcal T_h} \|\vec e^K\|_{h,K}^2 
    \lesssim  \sum_{K \in\mathcal T_h} \|\vec e^K\|_{U,K}^2
    \lesssim \|\vec e\|_{U}^2.
    \label{eq: S0-Estimate-General}
\end{equation}
We then estimate \(\sum_K\mathcal S_1^K\). 
At the surface quadrature node \(\mathbf x_s^\gamma\), we denote
$e_{v,s}^{\gamma,K^{\pm}}:=v_s^\gamma-v_{h,s}^{\gamma,K^{\pm}},
    e_{s}^{\gamma,K^{\pm}}:=u_s^\gamma-u_{h,s}^{\gamma,K^{\pm}}$.
Since the outward normal of \(K^-\) on \(\gamma\) is \(\mathbf n_\gamma\), 
while the outward normal of \(K^+\) on \(\gamma\) is \(-\mathbf n_\gamma\),  we have
\begin{align*}
    \sum_{K\in\mathcal T_h}\mathcal S_1^K
    ={}&\sum_{\gamma\in\mathcal E_h}\sum_{s=1}^{N_{B,k}}\tau_s^\gamma
    \Bigg\{
    e_{v,s}^{\gamma,K^-}
    \left(
    \hat f_{\mathbf n_\gamma}(u_{h,s}^{\gamma,K^-},u_{h,s}^{\gamma,K^+})
    -\frac12\bigl(f_{\mathbf n_\gamma}(u_s^\gamma)
    +f_{\mathbf n_\gamma}(u_{h,s}^{\gamma,K^-})\bigr)
    \right)\notag\\
    &\hspace{1.0cm}
    +e_{v,s}^{\gamma,K^+}
    \left(
    \frac12\bigl(f_{\mathbf n_\gamma}(u_s^\gamma)
    +f_{\mathbf n_\gamma}(u_{h,s}^{\gamma,K^+})\bigr)
    -\hat f_{\mathbf n_\gamma}(u_{h,s}^{\gamma,K^-},u_{h,s}^{\gamma,K^+})
    \right)
    \Bigg\}.
\end{align*}
We combine the two terms associated with the entropy-stable flux and then rewrite the resulting expression by adding and subtracting the entropy-conservative flux. 
\begin{align*}
    \sum_{K\in\mathcal T_h}\mathcal S_1^K
    ={}&\sum_{\gamma\in\mathcal E_h}\sum_{s=1}^{N_{B,k}}\tau_s^\gamma
    \Bigg\{
    \bigl(v_{h,s}^{\gamma,K^+}-v_{h,s}^{\gamma,K^-}\bigr)
    \Bigl(
    \hat f_{\mathbf n_\gamma}(u_{h,s}^{\gamma,K^-},u_{h,s}^{\gamma,K^+})
    -f_{\mathbf n_\gamma,S}(u_{h,s}^{\gamma,K^-},u_{h,s}^{\gamma,K^+})
    \Bigr)\notag\\
    &
    +\bigl(v_{h,s}^{\gamma,K^+}-v_{h,s}^{\gamma,K^-}\bigr)
    f_{\mathbf n_\gamma,S}(u_{h,s}^{\gamma,K^-},u_{h,s}^{\gamma,K^+})
    -\frac12 e_{v,s}^{\gamma,K^-}
    \bigl(f_{\mathbf n_\gamma}(u_s^\gamma)
    +f_{\mathbf n_\gamma}(u_{h,s}^{\gamma,K^-})\bigr)
    \notag\\
    &\hspace{2.0cm}
    +\frac12 e_{v,s}^{\gamma,K^+}
    \bigl(f_{\mathbf n_\gamma}(u_s^\gamma)
    +f_{\mathbf n_\gamma}(u_{h,s}^{\gamma,K^+})\bigr)
    \Bigg\}.
\end{align*}
By the definitions of entropy conservative flux and entropy stable flux, we have
    \begin{align}
    \sum_{K\in\mathcal T_h}\mathcal S_1^K
    \le{}&\sum_{\gamma\in\mathcal E_h}\sum_{s=1}^{N_{B,k}}\tau_s^\gamma
    \Bigg\{
    \tilde \psi_{\mathbf n_\gamma}(u_{h,s}^{\gamma,K^+})
    -\tilde \psi_{\mathbf n_\gamma}(u_{h,s}^{\gamma,K^-})
    \notag \\
    &
    -\frac12 e_{v,s}^{\gamma,K^-}
    \bigl(f_{\mathbf n_\gamma}(u_s^\gamma)
    +f_{\mathbf n_\gamma}(u_{h,s}^{\gamma,K^-})\bigr)
    +\frac12 e_{v,s}^{\gamma,K^+}
    \bigl(f_{\mathbf n_\gamma}(u_s^\gamma)
    +f_{\mathbf n_\gamma}(u_{h,s}^{\gamma,K^+})\bigr)
    \Bigg\} \notag \\
    ={}&\sum_{\gamma\in\mathcal E_h}\sum_{s=1}^{N_{B,k}}\tau_s^\gamma
    \left(G_{\gamma,s}(e_{s}^{\gamma,K^+})
    -G_{\gamma,s}(e_{s}^{\gamma,K^-})\right), \label{eq:S1-before-G-general}
    \end{align}
where \(\tilde \psi_{\mathbf n_\gamma}(u)=\psi_{\mathbf n_\gamma}(v(u))\) and 
\begin{equation*}
    G_{\gamma,s}(\mu)
    :=\frac12 (v(u_s^\gamma)-v(u_s^\gamma-\mu))\bigl(f_{\mathbf n_\gamma}(u_s^\gamma)
    +f_{\mathbf n_\gamma}(u_s^\gamma-\mu)\bigr)
    +\tilde \psi_{\mathbf n_\gamma}(u_s^\gamma-\mu)
    -\tilde \psi_{\mathbf n_\gamma}(u_s^\gamma).
\end{equation*}
Since \(\tilde \psi_{\mathbf n_\gamma}'=v'f_{\mathbf n_\gamma}\), we have
\(
    G_{\gamma,s}(0)=G_{\gamma,s}'(0)=G_{\gamma,s}''(0)=0.
\)
By Taylor's expansion and Assumption~\ref{ass:a-priori}, we have
\begin{align*}
    \left|G_{\gamma,s}(e_{s}^{\gamma,K^-})\right|
    +\left|G_{\gamma,s}(e_{s}^{\gamma,K^+})\right| 
    \le C\left(|e_{s}^{\gamma,K^-}|^3+|e_{s}^{\gamma,K^+}|^3\right) 
    \le Ch\left((e_{s}^{\gamma,K^-})^2+(e_{s}^{\gamma,K^+})^2\right).
\end{align*}
By \eqref{eq:S1-before-G-general} and Lemma~\ref{lem:scaling-sbp}, we have
\begin{equation}
    \label{eq:S1-estimate-general}
    \begin{aligned}
        \sum_{K\in\mathcal T_h}\mathcal S_1^K
        \lesssim  h\sum_{\gamma\in\mathcal E_h}\sum_{s=1}^{N_{B,k}}\tau_s^\gamma
        \left((e_s^{\gamma,K^-})^2+(e_s^{\gamma,K^+})^2\right)
        \lesssim \|\vec e\|_h^2
        \lesssim \|\vec e\|_U^2.
    \end{aligned}
\end{equation}
We next estimate \(
\sum_{K\in\mathcal T_h}\mathcal S_2^K
\).
By Taylor's expansion, there exists \(\theta_{m,j}^K\) 
between \(u_j^K\) and \(u_{h,j}^K\) such that
% \begin{equation*}
%     f_m(u_j^K)-f_m(u_{h,j}^K)
%     =f_m'(u_j^K)e_j^K+r_{m,j}^K,
%     \quad
%     r_{m,j}^K=-\frac12 f_m''(\theta_{m,j}^K)(e_j^K)^2.
% \end{equation*}
% Equivalently,
\begin{equation*}
    \Delta\vec f_m^K
    =\vec J_m^K\circ\vec e^K+\vec r_m^K,
\end{equation*}
where
\begin{align*}
    &\vec J_m^K:=\bigl[f_m'(u_1^K),\ldots,f_m'(u_{N_{Q,k}}^K)\bigr]^T, \quad
    \vec r_m^K:=\bigl[r_{m,1}^K,\ldots,r_{m,N_{Q,k}}^K\bigr]^T.
\end{align*}
Here \(r_{m,j}^K=-\frac12 f_m''(\theta_{m,j}^K)(e_j^K)^2\). 
Define $\vec h^K:=H^K \vec{1}^K$, we have
\begin{equation}
    \begin{aligned}
        \mathcal S_2^K = & -\sum_{m=1}^d(\vec h^K \circ\vec e^K)^T A_m^K(\vec J_m^{K}\circ\vec e^K+\vec r_m^K) \\
        = &-\sum_{m=1}^d( (\vec h^K-\vec h^K_0) \circ\vec e^K)^T A_m^K(\vec J_m^K\circ\vec e^K+\vec r_m^K) \\
        &-\sum_{m=1}^d(\vec h^K_0 \circ\vec e^K)^T A_m^K((\vec J_m^K-\vec J_{m,0}^K)\circ\vec e^K+\vec r_m^K),
    \end{aligned}
    \label{eq:S2-decom}
\end{equation}
where 
\[
\vec h^K_0:=U''(u^K_0) \vec{1}^K,\quad \vec J_{m,0}^K:=f_m'(u^K_0)\vec{1}^K,
\]
and \(\mathbf x_0^K\in K\) is a fixed point and \(u^K_0:=u(\mathbf x_0^K)\). 
In the last equality of \eqref{eq:S2-decom} we have used the property of skew-symmetric matrix, i.e.,
\[
    (\vec h^K_0 \circ\vec e^K)^T A_m^K (\vec J_{m,0}^K \circ\vec e^K)
    =
    U''(u^K_0)f_m'(u^K_0) (\vec e^K)^T A_m^K \vec e^K
    =
    0.
\]
Using Assumption~\ref{ass:a-priori} again,
together with the smoothness of \(u,f_m\) and \(U\), and Lemma \ref{lem:scaling-sbp}, we have 
    \begin{align}
        \sum_{K\in\mathcal T_h}\mathcal S_2^K
        \leq & 
        \sum_{K\in\mathcal T_h}\sum_{m=1}^d
        \|(\vec h^K-\vec h^K_0) \circ\vec e^K\|_{\ell^2,K} 
        \|A_m^K\|_{\ell^2} 
        (\|\vec J_m^{K}\circ\vec e^K\|_{\ell^2,K}+\|\vec r_m^K\|_{\ell^2,K}) 
        \notag \\
        +&
        \sum_{K\in\mathcal T_h}\sum_{m=1}^d
        \|\vec h^K_0 \circ\vec e^K\|_{\ell^2,K} 
        \|A_m^K\|_{\ell^2}
        (\|(\vec J_m^K-\vec J_{m,0}^K)\circ\vec e^K\|_{\ell^2,K}+\|\vec r_m^K\|_{\ell^2,K}) 
        \notag \\
        \lesssim & 
        \sum_{K\in\mathcal T_h} h^d\|\vec{e}^K\|_{\ell^2,K}^2 
        \lesssim \sum_{K\in\mathcal T_h} \|\vec{e}^K\|_{h,K}^2
        \lesssim \|\vec e\|_h^2 \lesssim \| \vec{e}\|_U^2.  \label{eq:S2-global-estimate-general}
    \end{align}
We then estimate \(\mathcal S_3^K\).
By Assumption~\ref{ass:a-priori} and Lemma \ref{lem:Phi-structure}, we have 
\begin{equation*}
    |(\Delta\Phi_m^K)_{ij}|
    \lesssim h\bigl(|e_i^K|+|e_j^K|\bigr).
\end{equation*}
Thus, 
\begin{equation}
    \label{eq:S3-global-estimate-general}
    \begin{aligned}
        \sum_{K\in\mathcal T_h} \mathcal S_3^K
        &\lesssim \sum_{K\in\mathcal T_h}\sum_{m=1}^d\sum_{i,j=1}^{N_{Q,k}}
        |e_i^K| |H_i^K(S_m^K)_{ij}| |(\Delta\Phi_m^K)_{ij}|
        \\
        &\lesssim \sum_{K\in\mathcal T_h}h^d\sum_{j=1}^{N_{Q,k}}|e_j^K|^2
        \lesssim \sum_{K\in\mathcal T_h} \|\vec e^K\|_{h,K}^2
        \lesssim \|\vec e\|_h^2  \lesssim\|\vec e\|_U^2.
        \end{aligned}
\end{equation}
Finally, for \(\mathcal S_4^K\), by Lemma \ref{lem:scaling-sbp}, ~\ref{lem:local-truncation-error} and the Cauchy-Schwarz inequality, we have
\begin{equation}
\label{eq:S4-estimate-general}
\begin{aligned}
    \sum_{K\in\mathcal T_h}\mathcal S_4^K
    &\le \sum_{K\in\mathcal T_h}\|\vec e^K\|_{\ell^2,K}\|H^K \vec T^K(u)\|_{\ell^2,K}
    \\
    &\lesssim
    \left(\sum_{K\in\mathcal T_h} h^{-d}\|\vec e^K\|_{h,K}^2\right)^{1/2}
    \left(\sum_{K\in\mathcal T_h} h^{2k+2d} \right)^{1/2}
    \\
    &\lesssim
     h^k\|\vec e\|_h\lesssim h^k\|\vec e\|_U \lesssim h^{2k}+\|\vec e\|_U^2. 
\end{aligned}
\end{equation}
Substituting the estimates from \eqref{eq: S0-Estimate-General}, \eqref{eq:S1-estimate-general}, \eqref{eq:S2-global-estimate-general}, \eqref{eq:S3-global-estimate-general}, and \eqref{eq:S4-estimate-general} into \eqref{eq:global-energy-decomposition-general} yields
\begin{equation*}
    \frac12\frac{d}{dt}\|\vec e\|_U^2
    \lesssim \|\vec e\|_U^2+h^{2k}.
\end{equation*}
Gronwall's inequality, the initial approximation, and the equivalence of the norms complete the proof.
\end{proof}

\subsection{Discussion of the assumptions}
\label{subsec:discussion-assumptions}

We now proceed to discuss the two assumptions introduced at the outset of this section. 
First, we elaborate on the implications of the polynomial reconstruction assumption and 
establish a criterion for the existence of such a reconstruction space. 
Second, we justify the a priori \( L^\infty\) bound via a standard continuity argument.

\paragraph{Polynomial reconstruction}
Under Assumption~\ref{ass:polynomial-space}, each nodal vector is uniquely identified with a polynomial in \(V_h(K)\). This correspondence is applied in two main contexts. First, the positivity of the quadrature weights, together with the uniqueness of the polynomial reconstruction from nodal values, implies the uniform equivalence of the discrete norm to the standard $L^2(K)$ norm:
\[
    C_1\|w_h\|_{L^2(K)}^2
    \leq
    \|w_h\|_{h,K}^2
    \leq
    C_2\|w_h\|_{L^2(K)}^2,
    \quad w_h\in V_h(K),
\]
where \(C_1,C_2>0\) are independent of \(h\). Thus, the quadrature-based norm used
in our error estimate is equivalent to the usual broken \(L^2\) norm on the
reconstructed polynomial space.

Second, since \(V_h(K)\subset\mathcal P^r(K)\), where \(r\) is fixed
independently of \(h\), the standard inverse estimate yields
\[
    \|w_h\|_{L^\infty(K)}
    \leq
    C h_K^{-d/2}\|w_h\|_{L^2(K)}
    \leq
    C h_K^{-d/2}\|w_h\|_{h,K},
    \quad w_h\in V_h(K).
\]
In particular, this estimate can be applied to the reconstructed numerical
solution \(u_h\).
\begin{prop}[Polynomial reconstruction from nodal values]
\label{prop:polynomial-reconstruction}
Let \(V_k^K\) be the Vandermonde matrix associated with the volume quadrature
nodes and \(\mathcal P^k(K)\), as defined in Subsection~\ref{subsec: SBP-intro}.
Then the following statements are equivalent:
\begin{enumerate}
    \item \(V_k^K\) has full column rank.
    \item There exists an integer \(r\geq k\) and a polynomial space \(V_h(K)\)
    satisfying
    \[
        \mathcal P^k(K)\subset V_h(K)\subset\mathcal P^r(K),
        \quad
        \dim V_h(K)=N_{Q,k},
    \]
    such that the nodal evaluation map on the volume quadrature nodes is an
    isomorphism from \(V_h(K)\) onto \(\mathbb R^{N_{Q,k}}\).
\end{enumerate}
\end{prop}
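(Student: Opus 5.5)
The direction (2)$\Rightarrow$(1) is immediate, and I would dispose of it first. Since $\mathcal P^k(K)\subset V_h(K)$ and the nodal evaluation map $\mathcal N_K$ is injective on $V_h(K)$, it is in particular injective on $\mathcal P^k(K)$. With respect to the basis $\{p_l\}_{l=1}^{N_{P,k}}$, the restriction of $\mathcal N_K$ to $\mathcal P^k(K)$ is represented exactly by $V_k^K$, because $\mathcal N_K(\sum_l c_l p_l)=V_k^K\vec c$. Injectivity of this map is equivalent to $V_k^K$ having trivial kernel, i.e.\ full column rank $N_{P,k}$.

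For (1)$\Rightarrow$(2) I would build $V_h(K)$ by extending $\mathcal P^k(K)$ with suitably chosen higher-degree polynomials.

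\textbf{Step 1 (an interpolating space of some degree $r$).} The nodes $\mathbf x_1^K,\ldots,\mathbf x_{N_{Q,k}}^K$ are distinct points of $\mathbb R^d$. Hence there is an $r$ such that the evaluation map $\mathcal N_K^{(r)}:\mathcal P^r(K)\to\mathbb R^{N_{Q,k}}$ is surjective. Concretely, for each node $j$ I would take a product of affine functions vanishing at every other node but not at $\mathbf x_j^K$. For each $i\ne j$ one picks a linear form separating $\mathbf x_i^K$ from $\mathbf x_j^K$. This gives a Lagrange-type polynomial $\ell_j$ of degree at most $N_{Q,k}-1$. Taking $r:=\max(k,N_{Q,k}-1)$, the vectors $\mathcal N_K\ell_j$ are the standard basis vectors, so the evaluation matrix on $\mathcal P^r(K)$ has full row rank. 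The choice of $r$ depends only on the reference element, because the nodes are affine images of fixed reference nodes and affine maps preserve polynomial degree. So $r$ is independent of $h$, as Assumption~\ref{ass:polynomial-space} requires.

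\textbf{Step 2 (basis extension).} Let $V=\operatorname{range}(V_k^K)\subset\mathbb R^{N_{Q,k}}$. By (1), $\dim V=N_{P,k}$. Since $\mathcal N_K^{(r)}$ is surjective, I can pick polynomials $q_1,\ldots,q_{N_{Q,k}-N_{P,k}}\in\mathcal P^r(K)$ whose nodal vectors complete a basis of $V$ to a basis of $\mathbb R^{N_{Q,k}}$. This is the standard exchange lemma applied to the spanning family $\{\mathcal N_K\ell_j\}$. Set
\[
V_h(K):=\mathcal P^k(K)\oplus\operatorname{span}\{q_1,\ldots,q_{N_{Q,k}-N_{P,k}}\}.
\]
Then $\mathcal N_K$ maps a spanning set of $V_h(K)$ with $N_{Q,k}$ elements onto a basis of $\mathbb R^{N_{Q,k}}$. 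Therefore these $N_{Q,k}$ polynomials are linearly independent, $\dim V_h(K)=N_{Q,k}$, and $\mathcal N_K$ is an isomorphism. The inclusions $\mathcal P^k(K)\subset V_h(K)\subset\mathcal P^r(K)$ hold by construction.

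\textbf{Main obstacle.} The only subtle point is the existence of the interpolating degree $r$ in Step 1, i.e.\ that polynomials separate finitely many distinct points, together with keeping $r$ uniform in $h$. I would handle the uniformity by doing the construction once on $\widehat K$ and transporting it via the affine map $F_K$. Composition with $F_K^{-1}$ preserves $\mathcal P^k$ and $\mathcal P^r$, and by \eqref{eq:affine-map-sbp} it preserves the nodal correspondence. I would also note that the collocated surface nodes play no role here; only the distinctness of the volume nodes is used.
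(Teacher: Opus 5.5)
Your proposal is correct and follows essentially the same argument as the paper. Necessity comes from injectivity of the nodal map restricted to $\mathcal P^k(K)$. Sufficiency comes from lifting a complement of $\mathcal N_K(\mathcal P^k(K))$ through Lagrange-type products of affine factors of degree $N_{Q,k}-1$, with $r=\max\{k,N_{Q,k}-1\}$. The paper phrases this lift via an interpolation operator $\mathcal I_K$ applied to a complementary subspace $W$, while you phrase it as basis completion; the two are equivalent.
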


The proof of Proposition~\ref{prop:polynomial-reconstruction} is given in the
appendix; see Section \ref{app:prop-reconstruction}. Next, we provide three examples illustrating the proposition.

\begin{exmp}[One-dimensional Gauss-Lobatto nodes]
    When \(d=1\), the \(k+1\) Gauss-Lobatto nodes are distinct.
    Hence the nodal evaluation map on \(\mathcal P^k(\widehat K)\) is an
    isomorphism onto \(\mathbb R^{k+1}\), and one may simply take
    \(
        V_h(\widehat{K})=\mathcal P^k(\widehat K),
    \)
    and hence Assumption~\ref{ass:polynomial-space} holds with \(r=k\).
\end{exmp}
For the next two examples, let
\[
    \widehat K
    :=
    \left\{
        (\widehat x_1,\widehat x_2):
        \widehat x_1\geq0,\quad
        \widehat x_2\geq0,\quad
        \widehat x_1+\widehat x_2\leq1
    \right\},
\]
and let \(\lambda_1,\lambda_2,\lambda_3\) be its barycentric coordinates.
We denote by \(\operatorname{Per}(a,b,c)\) the set of all distinct
permutations of \((a,b,c)\).

\begin{exmp}[Triangular quadrature nodes for \(k=1\)]
When \(k=1\), the six quadrature nodes constructed in \cite{2017-ChenShu-JCP} have barycentric
coordinates
\[
    \hat{\mathcal X}_1
    =
    \operatorname{Per}
    \left(
        0,
        a,
        b
    \right), \quad
    a=\frac12-\frac{\sqrt3}{6},
    \quad
    b=\frac12+\frac{\sqrt3}{6}.
\]
We order the nodes as
\[
\begin{aligned}
    \widehat{\mathbf x}_1=(0,a,b),\,
    \widehat{\mathbf x}_2=(0,b,a),\,
    \widehat{\mathbf x}_3=(a,0,b),\,
    \widehat{\mathbf x}_4=(b,0,a),\,
    \widehat{\mathbf x}_5=(a,b,0),\,
    \widehat{\mathbf x}_6=(b,a,0).
\end{aligned}
\]
Following the standard construction in classic finite element methods, we can construct the Lagrange 
basis as
\begin{align*}
L_1
={}&
\frac12-\lambda_1
-3\sqrt3\,\lambda_2\lambda_3(\lambda_2-\lambda_3)
-3\lambda_1\lambda_3(\lambda_3-\lambda_1)
-3\lambda_1\lambda_2(\lambda_2-\lambda_1),
\\
L_2
={}&
\frac12-\lambda_1
-3\sqrt3\,\lambda_2\lambda_3(\lambda_3-\lambda_2)
-3\lambda_1\lambda_3(\lambda_3-\lambda_1)
-3\lambda_1\lambda_2(\lambda_2-\lambda_1),
\\
L_3
={}&
\frac12-\lambda_2
-3\sqrt3\,\lambda_1\lambda_3(\lambda_1-\lambda_3)
-3\lambda_2\lambda_3(\lambda_3-\lambda_2)
-3\lambda_1\lambda_2(\lambda_1-\lambda_2),
\\
L_4
={}&
\frac12-\lambda_2
-3\sqrt3\,\lambda_1\lambda_3(\lambda_3-\lambda_1)
-3\lambda_1\lambda_2(\lambda_1-\lambda_2)
-3\lambda_2\lambda_3(\lambda_3-\lambda_2),
\\
L_5
={}&
\frac12-\lambda_3
-3\sqrt3\,\lambda_1\lambda_2(\lambda_1-\lambda_2)
-3\lambda_2\lambda_3(\lambda_2-\lambda_3)
-3\lambda_1\lambda_3(\lambda_1-\lambda_3),
\\
L_6
={}&
\frac12-\lambda_3
-3\sqrt3\,\lambda_1\lambda_2(\lambda_2-\lambda_1)
-3\lambda_1\lambda_3(\lambda_1-\lambda_3)
-3\lambda_2\lambda_3(\lambda_2-\lambda_3).
\end{align*}
Clearly \(V_h(\widehat{K}):=\operatorname{span} \left\{ L_i \right\}_{i=1}^6\) satisfies our assumption.
It is worth noting that one cannot simply take
\(V_h(\widehat{K})=\mathcal P^2(\widehat K)\), even though
\(\dim\mathcal P^2(\widehat K)=6\). Indeed, the nonzero polynomial
\[
    q
    :=
    \frac16
    -
    \bigl(
        \lambda_1\lambda_2
        +
        \lambda_2\lambda_3
        +
        \lambda_3\lambda_1
    \bigr)
\]
vanishes at all six nodes in \(\widehat{\mathcal X}_1\).
\end{exmp}

\begin{exmp}[Triangular quadrature nodes for \(k=2\)]
When \(k=2\), the ten quadrature nodes constructed in \cite{2017-ChenShu-JCP} have barycentric
coordinates
\[
    \widehat{\mathcal X}_2
    =
    \left\{
        \left(\frac13,\frac13,\frac13\right)
    \right\}
    \cup
    \operatorname{Per}
    \left(
        0,\frac12,\frac12
    \right)
    \cup
    \operatorname{Per}
    \left(
        0,
        \frac12-\frac{\sqrt{15}}{10},
        \frac12+\frac{\sqrt{15}}{10}
    \right).
\]
Since
\(
    N_{Q,2}=10
    =
    \dim\mathcal P^3(\widehat K),
\)
one may take
\[
    V_h(\widehat K)
    :=
    \mathcal P^3(\widehat K).
\]
It is easy to check
\[
    \mathcal P^2(\widehat K)
    \subset
    V_h(\widehat K)
    =
    \mathcal P^3(\widehat K),
\]
and the nodal evaluation map is an isomorphism from
\(V_h(\widehat K)\) onto \(\mathbb R^{10}\).
\end{exmp}

\begin{rem}
    For higher-order triangular quadrature nodes (e.g., $k=3,4$), as shown in \cite{2017-ChenShu-JCP}, we can still numerically construct $V_h(\hat{K})$ that satisfies Assumption \ref{ass:polynomial-space}. Due to space limitations, we omit the construction details.
\end{rem}

\paragraph{A priori assumption}
We next justify Assumption~\ref{ass:a-priori} by a standard continuity
argument. Let 
\[
    T^*
    :=
    \sup\left\{
        t\in (0,T]:
        \|u(s)-u_h(s)\|_{L^\infty}\le h,
        \quad 0\le s\le t
    \right\}.
\]
On the interval \([0,T^*]\), Assumption~\ref{ass:a-priori} holds, and hence
Theorem~\ref{thm:scalar-esdg} gives
\[
    \|u(t)-u_h(t)\|_h\lesssim h^k,
    \quad 0\le t\le T^*.
\] 
Assumption~\ref{ass:polynomial-space} allows us to use the standard 
interpolation estimate and the inverse estimate to obtain
\begin{align*}
    \|u(t)-u_h(t)\|_{L^\infty}
    &\le
    \|u(t)-\mathcal I_hu(t)\|_{L^\infty}
    +
    \|\mathcal I_hu(t)-u_h(t)\|_{L^\infty} \\
    &\lesssim
    h^{k+1}
    +
    h^{-d/2}\|\mathcal I_hu(t)-u_h(t)\|_h\\
    &\lesssim h^{k+1}+h^{k-d/2},
\end{align*}
 where \(\mathcal I_hu|_{K} \in V_h(K)\) interpolates \(u\) at \(\left\{ \mathbf{x}_i^K\right\}_{i=1}^{N_{Q,k}}\) for each $K\in \cT_h$.
If
\(
    k-\frac d2 > 1,
\)
then for sufficiently small \(h\) we have
\[
    \|u(t)-u_h(t)\|_{L^\infty}
    \le
    \frac{1}{2}h
    < h,
    \quad 0\le t\le T^*.
\]
By the continuity of \(u_h(t)\) in time, this improved estimate implies
\(T^*=T\).

\subsection{Extension to the ESOFDG scheme}
In \cite{2024-LiuLuShu-SISC}, 
the authors proposed the ESOFDG scheme to control spurious oscillations 
by adding a damping term to the original ESDG formulation.
In this subsection, we derive an error estimate for the ESOFDG scheme for the 
scalar conservation law, which reads
\begin{equation}
    \label{eq:scalar-ESOFDG}
    \begin{aligned}
        M^K\frac{d\vec u_h^K}{dt}
        &+
        2\sum_{m=1}^d
        \bigl(
        S_m^K\circ F_{m,S}(\vec u_h^K,\vec u_h^K)
        \bigr)\vec 1^K
        \\&=
        \sum_{\gamma\in\mathcal E_K}
        ( R^{\gamma,K})^T B^\gamma
        \left(
            \vec f_{\mathbf n}^{\gamma,K}
            -
            \vec{\widehat{f}}_{\mathbf n}^{\gamma,K}
        \right)    
        - \sigma^K(u_h) M^K Q^K \vec{u}_h^K.
    \end{aligned}
\end{equation}
Here, \(Q^K=I-V_0^KP_0^K\) is the projection matrix mentioned in 
Subsection~\ref{subsec: SBP-intro}.
The damping coefficient \(\sigma^K(u_h)\) is defined as follows:
\begin{equation}
    \label{eq:damp-coeff}
    \sigma^K(u_h)= \Big( \sum_{l=0}^1\frac{h_K^{2l}}{l+1} \sum_{|\alpha|=l} 
    \frac{1}{N_\Gamma} \sum_{\nu\in\partial K} [\![\partial^\alpha u_h |_\nu]\!]^2\Big)
    ^{\frac12},
\end{equation}
where the vector \(\alpha\) is the multi-index, \(N_\Gamma\) stands for the number of faces of \(\mathcal{E}_K\),
and \([\![w |_\nu]\!]\) denotes the jump of the function \(w\) on the vertex \(\nu\in K\). 
We only consider the face-adjacent neighbors of the element. For a smooth exact solution, the jumps appearing in
\eqref{eq:damp-coeff} vanish. We show that the additional damping term
does not reduce the convergence rate. To obtain the desired estimate, we first give a lemma that gives a suitable bound of the damping term. 
\begin{lem}[Bound of damping term]
Let $\sigma^K(u_h)$ be defined in \eqref{eq:damp-coeff}, we have
\label{lem:damping-term}
    \[
        \sigma^K(u_h)\lesssim  h^{k+1}+
        h^{-\frac{d}{2}} \sum_{\tilde{K}\in\Omega_K} \|\vec{e}^{\tilde{K}}\|_{h,\tilde{K}},
    \]
    where \(\Omega_K:=\{K\}\cup\{ \tilde{K}\in \mathcal{T}_h: K\text{ and }\tilde K\text{ share a face} \}\).
    Under Assumption~\ref{ass:a-priori}, this further implies
    \[
    \sigma^K(u_h)\lesssim h.
    \]
\end{lem}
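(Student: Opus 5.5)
The plan is to bound each jump in \eqref{eq:damp-coeff} by inserting the exact solution, or rather its nodal interpolant. Let $\mathcal I_h u$ be the piecewise interpolant of $u$ in $V_h(K)$ at the nodes $\{\mathbf x_i^K\}$, as in the continuity argument above; this is well defined by Assumption~\ref{ass:polynomial-space}. Since $u$ is smooth, $[\![\partial^\alpha u|_\nu]\!]=0$. Hence, for a vertex $\nu$ shared by $K$ and a face neighbour $\tilde K$,
\[
[\![\partial^\alpha u_h|_\nu]\!]
=
\bigl(\partial^\alpha(u_h-u)|_K-\partial^\alpha(u_h-u)|_{\tilde K}\bigr)(\nu),
\]
so that $|[\![\partial^\alpha u_h|_\nu]\!]|\le \sum_{K'\in\{K,\tilde K\}}\|\partial^\alpha(u-u_h)\|_{L^\infty(K')}$. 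For each such $K'$ I split $u-u_h=(u-\mathcal I_hu)+(\mathcal I_hu-u_h)$.

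\textbf{Interpolation part.} By standard interpolation estimates on the affine-equivalent family (the reference interpolation onto $V_h(\widehat K)\supset\mathcal P^k$ reproduces $\mathcal P^k$), we have $\|\partial^\alpha(u-\mathcal I_hu)\|_{L^\infty(K')}\lesssim h^{k+1-|\alpha|}$. This holds for $|\alpha|\le1$ if $u\in W^{k+1,\infty}$. After multiplying by the weight $h_K^{|\alpha|}$ (the factor $h_K^{2l}$ inside the square root gives $h_K^{l}$ outside), every term contributes $\lesssim h^{k+1}$.

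\textbf{Discrete part.} The function $w:=\mathcal I_hu-u_h$ restricted to $K'$ lies in $V_h(K')\subset\mathcal P^r(K')$, and its nodal vector is exactly $\vec u^{K'}-\vec u_h^{K'}=\vec e^{K'}$, because $\mathcal I_hu$ matches $u$ at the nodes. The inverse inequalities of Lemma~\ref{lem:inverse-inequalities}, applied on the fixed-degree space $\mathcal P^r$ (elementwise, by scaling from $\widehat K$), give $h^{|\alpha|}\|\partial^\alpha w\|_{L^\infty(K')}\lesssim h^{-d/2}\|w\|_{L^2(K')}$. The norm equivalence stated under Assumption~\ref{ass:polynomial-space} then gives $\|w\|_{L^2(K')}\lesssim\|\vec e^{K'}\|_{h,K'}$.

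\textbf{Combining.} Since the number of vertices, faces and multi-indices is bounded, the square root of the finite sum is bounded by the sum of the square roots. This yields
\[
\sigma^K(u_h)\lesssim h^{k+1}+h^{-d/2}\sum_{\tilde K\in\Omega_K}\|\vec e^{\tilde K}\|_{h,\tilde K}.
\]

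\textbf{The second claim.} Under Assumption~\ref{ass:a-priori} I would argue more directly rather than through the $\ell^2$ sum. The nodal error satisfies $\|\vec e^{K'}\|_{\ell^\infty,K'}\le h$. Lemma~\ref{lem:scaling-sbp} then gives $\|\vec e^{K'}\|_{h,K'}\lesssim h^{d/2}\|\vec e^{K'}\|_{\ell^\infty,K'}\lesssim h^{d/2}\cdot h$, so $h^{-d/2}\|\vec e^{K'}\|_{h,K'}\lesssim h$. Together with $h^{k+1}\le h$ for $h\le1$, this gives $\sigma^K(u_h)\lesssim h$.

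\textbf{Main obstacle.} The delicate step is justifying the inverse inequality and the norm equivalence uniformly in $h$ for the nonstandard space $V_h(K)$. I would handle this by pulling back to $\widehat K$ via \eqref{eq:affine-map}. There, $V_h(\widehat K)$ is a fixed finite-dimensional space, so all norms are equivalent, and the affine scaling together with shape regularity yields the $h$-powers. This requires that $V_h(K)$ be the affine image of a fixed reference space, which I would assume, consistent with Remark~\ref{rem:affine-transformation-sbp}.
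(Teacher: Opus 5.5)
Your proposal is correct and follows essentially the same route as the paper. Both arguments rely on the nodal interpolant $\mathcal I_h u$ in $V_h$, an interpolation estimate giving $h^{k+1}$, and an inverse estimate plus norm equivalence for $\mathcal I_h u-u_h$ (whose nodal vector is $\pm\vec e$); the a priori bound then converts $\|\vec e^{\tilde K}\|_{h,\tilde K}$ into $h^{1+d/2}$. The only cosmetic difference is that you split each jump before summing, whereas the paper applies the triangle inequality directly to the seminorm $\sigma^K(u_h)\le\sigma^K(u_h-\mathcal I_h u)+\sigma^K(\mathcal I_h u)$.
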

The proof of Lemma \ref{lem:damping-term} is given in the appendix; see Section \ref{app:lem-damping}.
\begin{thm}
    Under the same assumption of Theorem~\ref{thm:scalar-esdg}, 
    the solution of the ESOFDG method \eqref{eq:scalar-ESOFDG} 
    satisfies
    \begin{equation*}
        \|u-u_h\|_h\lesssim h^k,
        \quad 0\le t\le T.
    \end{equation*}
\end{thm}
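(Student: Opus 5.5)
The plan is to repeat the energy argument in the proof of Theorem~\ref{thm:scalar-esdg} verbatim, and to treat the damping term as one additional contribution. The exact solution satisfies \eqref{eq:exact-split-general} with no damping term. Subtracting \eqref{eq:scalar-ESOFDG} from it therefore gives the error equation \eqref{eq:error-equation-general} with an extra term $+\sigma^K(u_h)M^KQ^K\vec u_h^K$ on the left-hand side. Left-multiplying by $(\vec e_v^K)^T=(\vec e^K)^TH^K$ reproduces $\mathcal S_0^K,\dots,\mathcal S_4^K$ together with a new term
\[
\mathcal S_5^K:=\sigma^K(u_h)\,(\vec e^K)^TH^KM^KQ^K\vec u_h^K .
\]
The estimates \eqref{eq: S0-Estimate-General}--\eqref{eq:S4-estimate-general} carry over almost unchanged. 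The only exception is $\mathcal S_0^K$, where the bound $|\partial_t u_{h,i}^K|\lesssim 1$ now has an extra piece $\sigma^K(u_h)|(Q^K\vec u_h^K)_i|$. This piece is harmless: Lemma~\ref{lem:damping-term} gives $\sigma^K\lesssim h$, and Lemma~\ref{lem:scaling-sbp} gives $\|Q^K\|_{\ell^\infty}\lesssim 1$ with $\vec u_h^K$ bounded.

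It then remains to show $\sum_K\mathcal S_5^K\lesssim h^{2k}+\|\vec e\|_U^2$. Write $\vec u_h^K=\vec u^K-\vec e^K$ and split $\mathcal S_5^K$ accordingly.

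\emph{Exact-solution part.} Since $Q^K$ annihilates constants and $u$ is smooth, $\|Q^K\vec u^K\|_{\ell^\infty,K}\lesssim h$. Lemma~\ref{lem:damping-term} gives $\sigma^K\lesssim h^{k+1}+h^{-d/2}\sum_{\tilde K\in\Omega_K}\|\vec e^{\tilde K}\|_{h,\tilde K}$. Combining these with $\|H^KM^K\|\lesssim h^d$, the term is bounded by
\[
h^{d+1}\|\vec e^K\|_{\ell^1,K}\Bigl(h^{k+1}+h^{-d/2}\sum_{\tilde K\in\Omega_K}\|\vec e^{\tilde K}\|_{h,\tilde K}\Bigr)\lesssim h^{d/2+1}\|\vec e^K\|_{h,K}\,h^{k+1}+h\,\|\vec e^K\|_{h,K}\sum_{\tilde K\in\Omega_K}\|\vec e^{\tilde K}\|_{h,\tilde K}.
\]
Summing over $K$ with Cauchy--Schwarz, using the bounded number of neighbours and $\#\mathcal T_h\sim h^{-d}$, this is $\lesssim h^{k+2}\|\vec e\|_h+h\|\vec e\|_h^2$.

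\emph{Error part.} The term $-\sigma^K(\vec e^K)^TH^KM^KQ^K\vec e^K$ is bounded in absolute value by $\sigma^K\|H^K\|\,\|\vec e^K\|_{h,K}\|Q^K\vec e^K\|_{h,K}$. Using $\sigma^K\lesssim h$ and the contraction property of $Q^K$ from Lemma~\ref{lem:scaling-sbp}, this is $\lesssim h\|\vec e^K\|_{h,K}^2$. No sign is needed here. If one prefers to exploit dissipation, one can write $H^K=H_0+(H^K-H_0)$ with a constant $H_0=U''(u_0^K)$; the identity \eqref{eq:QK-ortho} then makes $(\vec e^K)^TM^KQ^K\vec e^K\ge0$, and $|H^K-H_0|\lesssim h$. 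This refinement is unnecessary because the crude bound already suffices.

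Collecting the contributions gives $\frac12\frac{d}{dt}\|\vec e\|_U^2\lesssim \|\vec e\|_U^2+h^{2k}$. Gronwall's inequality and the equivalence of $\|\cdot\|_U$ and $\|\cdot\|_h$ then finish the proof, exactly as for Theorem~\ref{thm:scalar-esdg}.

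The main obstacle is the exact-solution part of $\mathcal S_5^K$. Here $\sigma^K$ depends on errors in neighbouring elements, and one must check that the factor $h^{-d/2}$ from Lemma~\ref{lem:damping-term} is compensated by the $O(h)$ size of $Q^K\vec u^K$ together with the weight scaling $h^d$. As the computation shows, these powers do balance. The $L^\infty$ bound used for $\vec u_h^K$ in the $\mathcal S_0^K$ term comes from Assumption~\ref{ass:a-priori}.
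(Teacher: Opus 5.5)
Your proposal is correct and essentially follows the paper's proof: the paper writes the extra term as $\sigma^K(u_h)(\vec e^K)^TH^KM^KQ^K\vec u_h^K$, splits $\vec u_h^K=\vec u^K-\vec e^K$, bounds the error part with $\sigma^K(u_h)\lesssim h$, and bounds the exact-solution part with $|(Q^K\vec u^K)_j|\lesssim h$ and the first estimate of Lemma~\ref{lem:damping-term}, reaching $h^{2k+4}+\|\vec e\|_h^2$. Two small remarks: the damping term enters the error equation as $+\sigma^K(u_h)M^KQ^K\vec u_h^K$ on the right-hand side, not the left, which is harmless because your $\mathcal S_5^K$ has the correct sign and you only use absolute bounds; and your check that the damping contribution to $\partial_t u_{h,i}^K$ does not spoil the $\mathcal S_0^K$ bound covers a point the paper leaves implicit.
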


\begin{proof}
    Notice that
    \(
        \sigma^K(u)=0
    \)
    for smooth solution \(u\).
    So, under the same procedure as the proof of Theorem~\ref{thm:scalar-esdg} we get
    \begin{equation*}
    \frac12\frac{d}{dt}\|\vec e\|_U^2
    =\sum_{K\in\mathcal T_h}\mathcal S_0^K
    +\sum_{K\in\mathcal T_h}\mathcal S_1^K
    +\sum_{K\in\mathcal T_h}\mathcal S_2^K
    +\sum_{K\in\mathcal T_h}\mathcal S_3^K
    +\sum_{K\in\mathcal T_h}\mathcal S_4^K
    +\sum_{K\in \mathcal{T}_h} \mathcal{S}^K_{OF},
    \end{equation*}
    where
    \begin{equation}
        \label{eq:new-error-term-ESOFDG}
        \sum_{K\in \mathcal{T}_h} \mathcal{S}^K_{OF}:= \sum_{K\in \mathcal{T}_h} 
        \sigma^K(u_h) (\vec{e}^K)^T H^K M^K Q^K \vec{u}_h^K,
    \end{equation}
    and \(\mathcal S_i^K(0\leq i\leq 4)\) are the same as those in 
    \eqref{eq:global-energy-decomposition-general}. Thus,
    we only need to estimate $\mathcal{S}_{OF}^K$.
    We rewrite \eqref{eq:new-error-term-ESOFDG} as
    \begin{align*}
        \sum_{K\in \mathcal{T}_h} \mathcal{S}^K_{OF}=&\sum_{K\in \mathcal{T}_h} 
        -\sigma^K(u_h) (\vec{e}^K)^T H^K M^K Q^K \vec{e}^K
        +\sigma^K(u_h) (\vec{e}^K)^T H^K M^K Q^K \vec{u}^K \\
        =: & \sum_{K\in \mathcal{T}_h} \mathcal{O}^K_1+\sum_{K\in \mathcal{T}_h}\mathcal{O}^K_2.
    \end{align*}
    Let us first estimate \(\mathcal{O}_1^K\). By Lemmas \ref{lem:scaling-sbp}, \ref{lem:damping-term}, and the uniform convexity of the entropy, we have
    \begin{align*}
        \mathcal{O}^K_1 = & 
        -\sigma^K(u_h) (\vec{e}^K)^T H^K M^K Q^K \vec{e}^K
        \leq 
        \sigma^K(u_h) | (\vec{e}^K)^T H^K M^K Q^K \vec{e}^K | \\
        \leq &
        \sigma^K(u_h) \|\vec{e}^K\|_{\ell^2,K} \| H^K M^K Q^K \vec{e}^K\|_{\ell^2,K}
        \lesssim 
         h^{d+1}\|\vec{e}^K\|_{{\ell^2},K}^2 \lesssim \|\vec{e}^K\|_{h,K}^2.
    \end{align*}
    So we have 
    \[
         \sum_{K\in \mathcal{T}_h}\mathcal{O}^K_1 \lesssim \|\vec{e}\|_{h}^2\lesssim \|\vec{e}\|_{U}^2.
    \]
    For \(\mathcal{O}_2^K\), notice that according to the construction of \(Q^K\) and the smoothness of \(u\)
    \begin{align*}
        | (Q^K \vec{u}^K)_j |
        &= 
        |u_j - \frac{\sum_{i=1}^{N_{Q,k}} \omega_i u_i }{\sum_{i=1}^{N_{Q,k}} \omega_i}|
        = 
        |\frac{\sum_{i=1}^{N_{Q,k}} \omega_i (u_j-u_i) }{\sum_{i=1}^{N_{Q,k}} \omega_i}|
        \\
        &
        \leq 
        \frac{1}{\sum_{i=1}^{N_{Q,k}} \omega_i} (\sum_{i=1}^{N_{Q,k}} \omega_i |u_j-u_i| ) 
        \lesssim h,
    \end{align*}        
    so we have 
    \begin{align*}
        \sum_{K\in \mathcal{T}_h} \mathcal{O}_2^K 
        \leq &
        \sum_{K\in \mathcal{T}_h}
        \left| \sigma^K(u_h) (\vec{e}^K)^T H^K M^K Q^K \vec{u}^K \right| \\ 
        \lesssim &
        \sum_{K\in \mathcal{T}_h}
         \sigma^K(u_h)
        h^d\|\vec e^K\|_{\ell^2,K}
        \|Q^K\vec u^K\|_{\ell^2,K} 
        \lesssim 
        \sum_{K\in \mathcal{T}_h}
        \sigma^K(u_h)
        h^{1+\frac d2}
        \|\vec e^K\|_{h,K} \\
        \lesssim &
        \sum_{K\in \mathcal{T}_h}
        \Big(
         h^{k+2+\frac d 2}\|\vec e^K\|_{h,K}
        + h\|\vec e^K\|_{h,K}\sum_{\tilde{K}\in \Omega_K}\|\vec e^{\tilde K}\|_{h,\tilde K}
        \Big)
        \lesssim
         \, h^{2k+4}+ \|\vec e\|_{h}^2.
    \end{align*}
    It follows that
    \begin{align*}
        \sum_{K\in \mathcal{T}_h} \mathcal{S}^K_{OF}
        \lesssim \|\vec{e}\|_{U}^2
        + h^{2k+4}.
    \end{align*}
Applying the above estimate together with Gronwall’s inequality and the norm equivalence immediately yields the desired conclusion.
\end{proof}

\section{The hyperbolic system}
\label{sec:system-case}

We now extend our analysis to the system of conservation laws.
The proof follows the same consistency-stability framework as in
Section~\ref{sec:scalar-case}, with the main differences arising from
the matrix-valued entropy Hessian and flux Jacobians. We first introduce
the system counterparts of Assumptions~\ref{ass:polynomial-space}
and~\ref{ass:a-priori}.

\begin{assum}
\label{ass:system-polynomial-space}
For each element \(K\in\mathcal T_h\), let \(V_h(K)\) be the scalar
polynomial reconstruction space in
Assumption~\ref{ass:polynomial-space}, and define
\[
    \bigl[\mathcal P^k(K)\bigr]^n
    \subset
    \mathbf V_h(K):=[V_h(K)]^n
    \subset
    \bigl[\mathcal P^r(K)\bigr]^n.
\]
Then every vector of nodal values admits a unique polynomial
reconstruction in \(\mathbf V_h(K)\).
\end{assum}

\begin{assum}
\label{ass:system-a-priori}
Let \(\mathbf u_h\) denote the piecewise polynomial reconstruction
associated with Assumption~\ref{ass:system-polynomial-space}. Assume that
\[
    \|\mathbf u-\mathbf u_h\|_{L^\infty}
    \leq h,
    \quad 0\leq t\leq T.
\]
\end{assum}

\subsection{Error estimates for the ESDG scheme}
\label{subsec:system-case-error}

\begin{thm}
\label{thm:system-esdg}
Let \(U\) be a uniformly convex entropy function. Assume that the exact
solution \(\mathbf u\), the physical fluxes \(\mathbf f_m\), the entropy
conservative fluxes \(\mathbf f_{m,S}\)
are sufficiently smooth, and the entropy stable fluxes \(\hat {\mathbf f}_{\mathbf n}\) are 
locally Lipschitz continuous with respect to both arguments.
Suppose that
Assumption~\ref{ass:system-polynomial-space} and Assumption~\ref{ass:system-a-priori} 
holds, and
\begin{equation*}
    \|\mathbf u-\mathbf u_h\|_h
    \lesssim h^k, 
    \quad 
    t=0.
\end{equation*}
Then the solution of the ESDG scheme \eqref{eq:esdg-scheme} satisfies
\begin{equation*}
    \|\mathbf u-\mathbf u_h\|_h
    \lesssim h^k, 
    \quad 
    0\leq t\leq T.
\end{equation*}
\end{thm}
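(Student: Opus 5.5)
I would follow the proof of Theorem~\ref{thm:scalar-esdg} line by line, replacing scalars by $n$-vectors and the diagonal Hessian by a block-diagonal one. Set $\vec{\mathbf e}^K:=\vec{\mathbf u}^K-\vec{\mathbf u}_h^K$ and $\vec{\mathbf e}_v^K:=\vec{\mathbf v}^K-\vec{\mathbf v}_h^K$. Taylor's formula gives $\vec{\mathbf e}_v^K=\mathbf H^K\vec{\mathbf e}^K$, where $\mathbf H^K=\operatorname{diag}(H_1^K,\dots,H_{N_{Q,k}}^K)$ and
\[
H_i^K=\int_0^1 U''\bigl(\mathbf u_{h,i}^K+\theta\mathbf e_i^K\bigr)\,d\theta .
\]
Each block $H_i^K$ is symmetric with $C_1I_n\le H_i^K\le C_2I_n$; Assumption~\ref{ass:system-a-priori} keeps all states in a compact convex set. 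I would split $\mathbf f_{m,S}=\mathbf f_{m,c}+\boldsymbol\Phi_m$ and use $\mathbf S_m^K\vec{\mathbf 1}=0$ to turn the central part into $\mathbf S_m^K\vec{\mathbf f}_m$. Subtracting the truncation identity of Lemma~\ref{lem:local-truncation-error} then gives the error equation. I would test it with $(\vec{\mathbf e}_v^K)^T$, decompose $\mathbf S_m^K$ via \eqref{eq: nodal-SBP}, and work in the norm $\|\vec{\mathbf w}^K\|_{U,K}^2:=(\vec{\mathbf w}^K)^T\mathbf H^K\mathbf M^K\vec{\mathbf w}^K$. Since $\mathbf H^K$ and $\mathbf M^K$ commute blockwise and $\mathbf H^K$ is symmetric, this produces
\[
\frac12\frac{d}{dt}\|\vec{\mathbf e}\|_U^2=\sum_K\bigl(\mathcal S_0^K+\dots+\mathcal S_4^K\bigr)
\]
with the same meaning for each term as in the scalar case.

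Four of the five terms carry over with only notational changes.
\begin{itemize}
\item \emph{$\mathcal S_0$:} bounded by $\|\vec{\mathbf e}\|_U^2$, since $|\partial_t\mathbf u_{h,i}^K|\lesssim1$ follows exactly as before from the $h^{-1}$ scaling of $\mathbf D_m^K$ and the a priori bound.
\item \emph{$\mathcal S_1$:} the entropy-stable and entropy-conservative flux inequalities are scalar identities in $\mathbf v$. They reduce the face sum to $\sum\tau_s^\gamma\bigl(G_{\gamma,s}(\mathbf e^{+})-G_{\gamma,s}(\mathbf e^{-})\bigr)$, where $G_{\gamma,s}(\boldsymbol\mu)$ is the analogous function of $\boldsymbol\mu\in\mathbb R^n$. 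One checks $G(0)=0$, $\nabla G(0)=0$, and $\nabla^2G(0)=0$, the last using $\psi_{\mathbf n}'(\mathbf v)=\mathbf f_{\mathbf n}^T$. This gives a cubic, hence $h|\mathbf e|^2$, bound.
\item \emph{$\mathcal S_3$:} Lemma~\ref{lem:Phi-structure} is already stated for systems, so this term follows as in \eqref{eq:S3-global-estimate-general}.
\item \emph{$\mathcal S_4$:} Cauchy--Schwarz with Lemma~\ref{lem:local-truncation-error}, exactly as in \eqref{eq:S4-estimate-general}.
\end{itemize}

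The main obstacle is $\mathcal S_2^K=-\sum_m(\vec{\mathbf e}^K)^T\mathbf H^K\mathbf A_m^K\Delta\vec{\mathbf f}_m^K$. In the scalar case the skew term vanished because $U''f_m'$ is a scalar. Now I would write $\Delta\vec{\mathbf f}_m^K=\mathbf J_m^K\vec{\mathbf e}^K+\vec{\mathbf r}_m^K$ with $\mathbf J_m^K=\operatorname{diag}(\mathbf f_m'(\mathbf u_j^K))$ and $|\mathbf r_{m,j}^K|\lesssim|\mathbf e_j^K|^2\lesssim h|\mathbf e_j^K|$. Then I would freeze both the Hessian and the Jacobian at a point $\mathbf u_0^K=\mathbf u(\mathbf x_0^K)$: $\mathbf H^K=\mathbf H_0^K+(\mathbf H^K-\mathbf H_0^K)$ and $\mathbf J_m^K=\mathbf J_{m,0}^K+(\mathbf J_m^K-\mathbf J_{m,0}^K)$. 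The perturbations are $O(h)$ blockwise, by smoothness of $\mathbf u$ and the a priori bound. The frozen term equals $(\vec{\mathbf e}^K)^T\bigl(A_m^K\otimes H_0\mathbf f_m'(\mathbf u_0)\bigr)\vec{\mathbf e}^K$. The key is entropy symmetrization: differentiating the entropy-pair identity \eqref{eq:entropy-pair} shows $U''\mathbf f_m'$ is symmetric. The Kronecker product of the skew matrix $A_m^K$ with a symmetric matrix is skew, so this quadratic form vanishes. Each remaining piece has the form $O(h)\cdot\|\mathbf A_m^K\|_{\ell^2}\|\vec{\mathbf e}^K\|_{\ell^2}^2$. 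Since $\|\mathbf A_m^K\|\lesssim h^{d-1}$ by Lemma~\ref{lem:scaling-sbp}, each piece is $\lesssim h^d\|\vec{\mathbf e}^K\|_{\ell^2}^2\lesssim\|\vec{\mathbf e}^K\|_{h,K}^2$.

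Collecting terms gives $\frac{d}{dt}\|\vec{\mathbf e}\|_U^2\lesssim\|\vec{\mathbf e}\|_U^2+h^{2k}$. Gronwall's inequality, the initial error assumption, and the equivalence $\|\cdot\|_U\sim\|\cdot\|_h$ (from $C_1I_n\le H\le C_2I_n$) then conclude the proof.
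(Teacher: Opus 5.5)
Your proposal is correct and follows the paper's proof essentially step for step. It uses the same block-diagonal $\mathbf H^K$-weighted energy and five-term splitting, carries $\mathcal S_0,\mathcal S_1,\mathcal S_3,\mathcal S_4$ over from the scalar case, and handles $\mathcal S_2$ the same way: freeze $\mathbf H^K$ and $\mathbf J_m^K$ at $\mathbf u(\mathbf x_0^K)$, note that $A_m^K\otimes(H_0^KJ_{m,0}^K)$ is skew because $U''\mathbf f_m'$ is symmetric, and bound the $O(h)$ perturbations using $\|\mathbf A_m^K\|_{\ell^2}\lesssim h^{d-1}$.
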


\begin{proof} 
For each \(K\in\mathcal T_h\), define
\[
    \vec{\mathbf e}^K
    :=
    \vec{\mathbf u}^K-\vec{\mathbf u}_h^K,
    \quad
    \vec{\mathbf e}_v^K
    :=
    \vec{\mathbf v}^K-\vec{\mathbf v}_h^K.
\]
By Taylor's expansion, we have
\(
   \vec{\mathbf e}^K_v=\mathbf H^K \vec{\mathbf e}^K,
\)
where
\begin{align*}
    &\mathbf H^K:= \operatorname{diag}(H^K_1,\cdots,H^K_{N_{Q,k}})\in \mathbb{R}^{n N_{Q,k}\times n N_{Q,k}}, \\
    &
    H_i^K:= \int_{0}^1 U''(\mathbf u^K_{h,i}+\theta(\mathbf u^K_{i}-\mathbf u^K_{h,i}))\, d\theta \in \mathbb{R}^{n\times n}.
\end{align*}
Each \(H_i^K\) is symmetric positive definite. Define the
entropy-weighted energy by
\[
    \|\vec{\mathbf e}\|_U^2
    :=\sum_{K\in\mathcal T_h}
    (\vec{\mathbf e}^K)^T\mathbf H^K\mathbf M^K\vec{\mathbf e}^K.
\]
The uniform convexity and smoothness of \(U\) imply $\|\vec{\mathbf e}\|_U \sim \|\vec{\mathbf e}\|_h$.
Under the same procedure of proof of Theorem~\ref{thm:scalar-esdg}, we can derive the error equation
\begin{equation*}
    \frac12\frac{d}{dt}\|\vec{\mathbf e}\|_U^2
    =\sum_{K\in\mathcal T_h}\mathcal S_0^K
    +\sum_{K\in\mathcal T_h}\mathcal S_1^K
    +\sum_{K\in\mathcal T_h}\mathcal S_2^K
    +\sum_{K\in\mathcal T_h}\mathcal S_3^K
    +\sum_{K\in\mathcal T_h}\mathcal S_4^K,
\end{equation*}
where
\begin{align*}
    \mathcal S_0^K
    &=\frac12 (\vec{\mathbf e}^K)^T \frac{d \mathbf H^K}{dt} \mathbf M^K \vec{\mathbf e}^K, 
    \quad
    \mathcal S_1^K
    =\sum_{\gamma\in\mathcal E_K}(\vec{\mathbf e}^{\gamma,K}_v)^T \mathbf B^\gamma
    \left(\frac12\Delta\vec{\mathbf f}_{\mathbf n}^{\gamma,K}
     -\Delta\vec{\hat{\mathbf f}}_{\mathbf n}^\gamma\right),\\ \mathcal S_2^K
     &=-\sum_{m=1}^d(\vec{\mathbf e}^K)^T \mathbf H^K \mathbf A_m^K\Delta\vec{\mathbf f}_m^K,\quad 
    \mathcal S_3^K
    =-2\sum_{m=1}^d(\vec{\mathbf e}^K)^T \mathbf H^K\bigl(\mathbf S_m^K\circ\Delta\boldsymbol\Phi_m^K\bigr)\vec{\mathbf 1}^K,\\
    \mathcal S_4^K
    &=(\vec{\mathbf e}^K)^T \mathbf H^K \vec{\mathbf T}^K(\mathbf{u}).
\end{align*}
For \(\mathcal{S}_0^K,\mathcal{S}_1^K,\mathcal{S}_3^K\) and \(\mathcal{S}_4^K\), the proof follows the same procedure as the 
scalar case, and we can obtain
\[
    \sum_{K\in\mathcal T_h}\mathcal S_0^K \lesssim \|\vec{\mathbf e}\|_U^2,
    \quad
    \sum_{K\in\mathcal T_h}\mathcal S_1^K \lesssim \|\vec{\mathbf e}\|_U^2,
    \quad
    \sum_{K\in\mathcal T_h}\mathcal S_3^K \lesssim \|\vec{\mathbf e}\|_U^2,
    \quad
    \sum_{K\in\mathcal T_h}\mathcal S_4^K \lesssim \|\vec{\mathbf e}\|_U^2+h^{2k}.
\]
We only estimate \(
\sum_{K\in\mathcal T_h}\mathcal S_2^K
\).
Similar to the scalar case, Taylor's expansion at the exact solution gives 
\begin{equation*}
    \Delta\vec{\mathbf f}_m^K
    =
    \mathbf J_m^K\vec{\mathbf e}^K
    +
    \vec{\mathbf r}_m^K,
\end{equation*}
where
\[
    \mathbf J_m^K
    :=
    \operatorname{diag}
    \bigl(
        \mathbf f_m'(\mathbf u_1^K),\ldots,
        \mathbf f_m'(\mathbf u_{N_{Q,k}}^K)
    \bigr),
\]
and the Taylor's remainder satisfies
\[
    |\mathbf r_{m,i}^K|
    \lesssim |\mathbf e_i^K|^2,
    \quad 1\leq i\leq N_{Q,k}.
\]
For a fixed point \(\mathbf{x}_0^K\in K\), we define
\begin{align*}
    H_0^K:= U''(\mathbf{u}(\mathbf{x}_0^K)),& \quad J_{m,0}^K:= \mathbf{f}_m^\prime (\mathbf{u}(\mathbf{x}_0^K)),
    \\
    \mathbf{H}_0^K := I_{N_{Q,k}} \otimes H_0^K,& \quad \mathbf{J}_{m,0}^K:= I_{N_{Q,k}}\otimes J_{m,0}^K,
\end{align*}
Then 
\begin{align*}
    \mathcal S_2^K = &-\sum_{m=1}^d(\vec{\mathbf e}^K)^T (\mathbf{H}^K - \mathbf{H}_0^K) \mathbf A_m^K 
    (\mathbf J_m^{K}\vec{\mathbf e}^K+\vec{\mathbf r}_m^K) \\
    &
     -\sum_{m=1}^d (\vec{\mathbf e}^K)^T \mathbf{H}_0^K \mathbf A_m^K 
    ((\mathbf{J}_m^{K}-\mathbf{J}_{m,0}^K)\vec{\mathbf e}^K+\vec{\mathbf r}_m^K) - \sum_{m=1}^d (\vec{\mathbf e}^K)^T \mathbf{H}_0^K \mathbf A_m^K \mathbf{J}_{m,0}^K \vec{\mathbf e}^K.
\end{align*}
By the property of the Kronecker product, we obtain
\[
\begin{aligned}
    \mathbf H_0^K\mathbf A_m^K\mathbf J_{m,0}^K
    =
    (I_{N_{Q,k}}\otimes H_0^K)
    (A_m^K\otimes I_n)
    (I_{N_{Q,k}}\otimes J_{m,0}^K)
    =
    A_m^K\otimes(H_0^KJ_{m,0}^K).
\end{aligned}
\]
Differentiating \eqref{eq:entropy-pair}, we obtain
\[
        H(\mathbf u)\mathbf f_m'(\mathbf u)=F_m''(\mathbf u)-
        \sum_{r=1}^n v_r(\mathbf u)f_{m,r}''(\mathbf u),
\]
where $f_{m,r}''$ denotes the Hessian matrix of the $r$-th component of $\mathbf{f}_m$.
Since \(F_m''(\mathbf u)\) and \(f_{m,r}''(\mathbf u), \, r=1,\ldots, n\) are
symmetric matrices, then the $H(\mathbf u)\mathbf f_m'(\mathbf u)$ is  symmetric matrix. Since \(A_m^K\) is a skew-symmetric matrix, we have
 \begin{equation*}
    \sum_{m=1}^d (\vec{\mathbf e}^K)^T \mathbf{H}_0^K \mathbf A_m^K \mathbf{J}_{m,0}^K \vec{\mathbf e}^K=0.
\end{equation*}
By a priori Assumption~\ref{ass:system-a-priori} and smoothness of the exact solution, and 
\(\|\mathbf A_m^K\|_{\ell^2}\lesssim h^{d-1}\), we have
\begin{equation*}
    \sum_{K\in\mathcal T_h}\mathcal S_2^K
    \lesssim \sum_{K\in\mathcal T_h}
    h^d\|\vec{\mathbf e}^K\|_{\ell^2,K}^2
    \lesssim \|\vec{\mathbf e}\|_h^2
    \lesssim \|\vec{\mathbf e}\|_U^2.
\end{equation*}
Finally, Gronwall's inequality and initial approximation complete the proof.
\end{proof}

\subsection{Extension to the ESOFDG scheme}

The ESOFDG scheme for conservation law systems reads
\begin{equation}
\begin{aligned}
    \mathbf M^K
    \frac{d\vec{\mathbf u}_h^K}{dt}
    &+
    2\sum_{m=1}^d
    \bigl(
        \mathbf S_m^K
        \circ
        \mathbf F_{m,S}
        (\vec{\mathbf u}_h^K,\vec{\mathbf u}_h^K)
    \bigr)
    \vec{\mathbf 1}^K\\
    &=
    \sum_{\gamma\in \mathcal E_K}
    (\mathbf R^{\gamma,K})^T
    \mathbf B^\gamma
    \left(
        \vec{\mathbf f}_{\mathbf n}^{\gamma,K}
        -
        \vec{\hat{\mathbf f}}_{\mathbf n}^{\gamma,K}
    \right)
    -
    \sigma^K(\mathbf u_h)
    \mathbf M^K\mathbf Q^K\vec{\mathbf u}_h^K.
\end{aligned}
    \label{eq:system-ESOFDG}
\end{equation}
The damping coefficient \(\sigma^K(\mathbf{u}_h)\) is defined as follows:
\begin{equation}
    \label{eq:system-damp-coeff}
    \sigma^K(\mathbf{u}_h)= \max_{1\leq s\leq n} \Big( \sum_{l=0}^1\frac{h_K^{2l}}{l+1} \sum_{|\alpha|=l} 
    \frac{1}{N_\Gamma} \sum_{\nu\in\partial K} [[(\mathbf{L}\partial^\alpha \mathbf{u}_h)_s |_\nu]]^2\Big)
    ^{\frac12}.
\end{equation}
The matrix \(\mathbf{L}\) comes from the characteristic decomposition such that
\[
    \sum_{i=1}^d n_i \mathbf{f}_i'(\overline{\mathbf{u}}_h)=\mathbf{L}^{-1} \mathbf{\Lambda} \mathbf{L},
\]
where \(\mathbf{n}=(n_1,\cdots,n_d)^T\) denotes a unit outward normal. \(\overline{\mathbf{u}}_h\) denotes 
some average of \(\mathbf{u}_h\) at the point \(\nu\in\partial K\) \cite{2024-LiuLuShu-SISC}. 
Hence, for each individual jump, the same matrix is applied to the two traces.
Moreover, a priori Assumption \ref{ass:system-a-priori} implies that \(\mathbf L\) is uniformly bounded:
\[
    \|\mathbf L\|_{\ell^2}
    \le C_L,
\]
where \(C_L\) is independent of \(h\), the element, and the face. Similar to the scalar case, the following lemma gives the estimate required for the additional
damping term. 
\begin{lem}
\label{lem:system-damping-term}
Let $\sigma^K(\mathbf u_h)$ be defined in \eqref{eq:system-damp-coeff}, we have
\[
    \sigma^K(\mathbf u_h)
    \lesssim h_K^{k+1}
    +
    h_K^{-d/2}
    \sum_{\tilde K\in\Omega_K}
    \|\vec{\mathbf e}^{\tilde K}\|_{h,\tilde K}.
\]
Under Assumption~\ref{ass:system-a-priori}, this further implies
\[
    \sigma^K(\mathbf u_h)
    \lesssim h_K.
\]
\end{lem}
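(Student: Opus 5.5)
The bound will follow from the same argument as Lemma~\ref{lem:damping-term}, applied componentwise after multiplication by the characteristic matrix $\mathbf L$. The starting point is that the exact solution $\mathbf u$ is smooth, so $[\![(\mathbf L\partial^\alpha\mathbf u)_s|_\nu]\!]=0$ at every vertex $\nu$ and for every $|\alpha|\le1$. The key structural fact is that each jump uses the same matrix $\mathbf L$ on both traces. This lets us write
\[
[\![(\mathbf L\partial^\alpha\mathbf u_h)_s|_\nu]\!]=-\bigl(\mathbf L[\![\partial^\alpha(\mathbf u-\mathbf u_h)|_\nu]\!]\bigr)_s .
\]
With $\|\mathbf L\|_{\ell^2}\le C_L$, this gives
\[
\bigl|[\![(\mathbf L\partial^\alpha\mathbf u_h)_s|_\nu]\!]\bigr|\le C_L\bigl(|\partial^\alpha(\mathbf u-\mathbf u_h)|_K(\nu)|+|\partial^\alpha(\mathbf u-\mathbf u_h)|_{\tilde K}(\nu)|\bigr),
\]
where $\tilde K$ is the face neighbour containing $\nu$, so $\tilde K\in\Omega_K$. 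Taking the maximum over $s$ only costs a constant, because $n$ is fixed.

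Next I split the error on each $\tilde K\in\Omega_K$ with the nodal interpolant $\mathcal I_h\mathbf u\in\mathbf V_h(\tilde K)$ provided by Assumption~\ref{ass:system-polynomial-space}:
\[
\mathbf u-\mathbf u_h=(\mathbf u-\mathcal I_h\mathbf u)+(\mathcal I_h\mathbf u-\mathbf u_h).
\]
The nodal vector of the second piece is exactly $\vec{\mathbf e}^{\tilde K}$.

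\textbf{Interpolation part.} Since $\mathcal P^k\subset\mathbf V_h\subset\mathcal P^r$ and the nodal isomorphism is affine-invariant, a Bramble--Hilbert/scaling argument gives $\|\partial^\alpha(\mathbf u-\mathcal I_h\mathbf u)\|_{L^\infty(\tilde K)}\lesssim h^{k+1-|\alpha|}$. Multiplying by the weight $h_K^{|\alpha|}$ in \eqref{eq:system-damp-coeff} turns this into $h^{k+1}$ for both $l=0$ and $l=1$.

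\textbf{Discrete part.} The inverse estimates of Lemma~\ref{lem:inverse-inequalities} hold on $\mathcal P^r$, and Assumption~\ref{ass:system-polynomial-space} gives equivalence of $\|\cdot\|_{h,\tilde K}$ and $\|\cdot\|_{L^2(\tilde K)}$ on $\mathbf V_h(\tilde K)$. Together these give
\[
h^{|\alpha|}\|\partial^\alpha(\mathcal I_h\mathbf u-\mathbf u_h)\|_{L^\infty(\tilde K)}\lesssim h^{-d/2}\|\vec{\mathbf e}^{\tilde K}\|_{h,\tilde K}.
\]
The number of vertices, faces and multi-indices is bounded, and quasi-uniformity gives $h_{\tilde K}\sim h_K$. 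Summing and using $\sqrt{\sum a_i^2}\le\sum a_i$ then yields the first estimate.

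For the second claim I will not use the first bound directly. I go back to the jump bound and use $\|\mathbf u-\mathbf u_h\|_{L^\infty}\le h$ from Assumption~\ref{ass:system-a-priori}, which controls the $l=0$ terms by $h$. For $l=1$ I need $h\|\nabla(\mathbf u-\mathbf u_h)\|_{L^\infty(\tilde K)}\lesssim h$. I get this by splitting off $\mathcal I_h\mathbf u$ again and applying the $L^\infty$ inverse inequality $|\mathbf w|_{W^{1,\infty}}\lesssim h^{-1}\|\mathbf w\|_{L^\infty}$ on $\mathcal P^r$ to $\mathcal I_h\mathbf u-\mathbf u_h$. Since
\[
\|\mathcal I_h\mathbf u-\mathbf u_h\|_{L^\infty}\le h+h^{k+1}\lesssim h,
\]
this gives the required bound, and hence $\sigma^K(\mathbf u_h)\lesssim h_K$.

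The only delicate point is the uniform boundedness of $\mathbf L$, which depends on the averaged state $\overline{\mathbf u}_h$. The a priori bound keeps $\overline{\mathbf u}_h$ in a compact neighbourhood of the range of $\mathbf u$, where the eigenvector matrices can be chosen uniformly bounded by strict hyperbolicity; I take $C_L$ as given in the text.
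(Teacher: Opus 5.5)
Your proposal is correct and follows the paper's proof. You use the fact that the same $\mathbf L$ acts on both traces to bound each characteristic jump by $C_L$ times the plain jump of $\partial^\alpha\mathbf u_h$, and then run the interpolant splitting of Appendix~\ref{app:lem-damping}: the interpolation part gives $h^{k+1}$, and the inverse estimate with norm equivalence gives $h^{-d/2}\|\vec{\mathbf e}^{\tilde K}\|_{h,\tilde K}$. The only deviation is in the second claim. The paper simply inserts $\|\vec{\mathbf e}^{\tilde K}\|_{h,\tilde K}\lesssim h^{d/2}\|\mathbf u-\mathbf u_h\|_{L^\infty}\lesssim h^{1+d/2}$ into the first bound, whereas you rerun the argument with a $W^{1,\infty}$ inverse inequality; both routes are valid, and the paper's is shorter.
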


\begin{proof}
    For any vector-valued piecewise polynomial \(\mathbf v_h\) and
    each pairwise jump, the assumptions on the characteristic matrix
    give
    \[
        \left|
                [[(\mathbf L\partial^\alpha\mathbf v_h)_s|_\nu]]
        \right|
        =
        \left|
            \left(
                \mathbf L
                [[\partial^\alpha\mathbf v_h|_\nu]]
            \right)_s
        \right|
        \le
        \|\mathbf L\|_{{\ell^2}}
        \left|
            [[\partial^\alpha\mathbf v_h|_\nu]]
        \right|
        \lesssim
        \left|
            [[\partial^\alpha\mathbf v_h|_\nu]]
        \right|.
    \]
    Thus, the characteristic transformation does not affect the
    jump estimates. A similar argument in
    Appendix~\ref{app:lem-damping} therefore yields the result.
\end{proof}
Now we can obtain the error estimate of the ESOFDG scheme for hyperbolic systems in the following Theorem.
\begin{thm}
\label{thm:system-ESOFDG}
For the symmetrizable system of conservation laws \eqref{eq:conservation-laws}, assume
that the solution \(\mathbf{u}\) and the flux function $\mathbf{f}_m$ are sufficiently smooth with bounded derivatives. Let $\mathbf{u}_h$ be the numerical solution of the ESOFDG scheme \eqref{eq:system-ESOFDG} and \(U\) be a uniformly convex entropy function. Assume that the entropy
conservative fluxes \(\mathbf f_{m,S}\)
are sufficiently smooth, and the entropy stable fluxes \(\hat {\mathbf f}_{\mathbf n}\) are locally Lipschitz continuous with respect to both arguments. Suppose that
Assumption~\ref{ass:system-polynomial-space} and Assumption~\ref{ass:system-a-priori} 
holds, and
\begin{equation*}
    \|\mathbf u-\mathbf u_h\|_h
    \lesssim h^k, 
    \quad 
    t=0.
\end{equation*}
Then the solution of the ESOFDG scheme \eqref{eq:system-ESOFDG} satisfies
\begin{equation*}
    \|\mathbf u-\mathbf u_h\|_h
    \lesssim h^k,
    \quad 0\leq t\leq T.
\end{equation*}
\end{thm}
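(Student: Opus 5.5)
The plan is to combine the energy argument from the proof of Theorem~\ref{thm:system-esdg} with the damping-term treatment used for the scalar ESOFDG scheme. Since the exact solution is smooth, all jumps in \eqref{eq:system-damp-coeff} vanish, so $\sigma^K(\mathbf u)=0$. Inserting the nodal values of $\mathbf u$ into \eqref{eq:system-ESOFDG} therefore gives exactly the same truncation equation as for the ESDG scheme. Subtracting the scheme and left-multiplying by $(\vec{\mathbf e}_v^K)^T=(\vec{\mathbf e}^K)^T\mathbf H^K$ then produces
\begin{equation*}
    \frac12\frac{d}{dt}\|\vec{\mathbf e}\|_U^2
    =\sum_{K\in\mathcal T_h}\bigl(\mathcal S_0^K+\mathcal S_1^K+\mathcal S_2^K+\mathcal S_3^K+\mathcal S_4^K\bigr)
    +\sum_{K\in\mathcal T_h}\mathcal S_{OF}^K,
\end{equation*}
with $\mathcal S_{OF}^K:=\sigma^K(\mathbf u_h)(\vec{\mathbf e}^K)^T\mathbf H^K\mathbf M^K\mathbf Q^K\vec{\mathbf u}_h^K$.

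The terms $\mathcal S_0^K,\dots,\mathcal S_4^K$ are bounded exactly as in the proof of Theorem~\ref{thm:system-esdg}, giving $\lesssim\|\vec{\mathbf e}\|_U^2+h^{2k}$. One point needs checking for $\mathcal S_0^K$: it used the bound $|\partial_t\mathbf u_{h,i}^K|\lesssim 1$. This bound now picks up the extra term $\sigma^K(\mathbf u_h)(\mathbf Q^K\vec{\mathbf u}_h^K)_i$. That term is $O(1)$, since $\sigma^K\lesssim h$ by Lemma~\ref{lem:system-damping-term} and $\|\mathbf Q^K\|_{\ell^\infty}\lesssim1$ by Lemma~\ref{lem:scaling-sbp}. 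So that bound persists.

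For the new term, write $\vec{\mathbf u}_h^K=\vec{\mathbf u}^K-\vec{\mathbf e}^K$ and split $\mathcal S_{OF}^K=\mathcal O_1^K+\mathcal O_2^K$, where
\begin{equation*}
    \mathcal O_1^K=-\sigma^K(\mathbf u_h)(\vec{\mathbf e}^K)^T\mathbf H^K\mathbf M^K\mathbf Q^K\vec{\mathbf e}^K,
    \qquad
    \mathcal O_2^K=\sigma^K(\mathbf u_h)(\vec{\mathbf e}^K)^T\mathbf H^K\mathbf M^K\mathbf Q^K\vec{\mathbf u}^K.
\end{equation*}
$\mathcal H^K$ is a nonconstant block-diagonal matrix, so no sign is available for $\mathcal O_1^K$. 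I will not try to exploit one. I will simply use $\sigma^K\lesssim h$, $\|\mathbf H^K\|\lesssim1$, $\|\mathbf M^K\|\lesssim h^d$ and $\|\mathbf Q^K\|\lesssim1$ to get $|\mathcal O_1^K|\lesssim h^{d+1}\|\vec{\mathbf e}^K\|_{\ell^2,K}^2\lesssim\|\vec{\mathbf e}^K\|_{h,K}^2$.

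For $\mathcal O_2^K$, the key observation is that $\mathbf Q^K$ removes the weighted cell mean componentwise. Smoothness of $\mathbf u$ therefore gives $|(\mathbf Q^K\vec{\mathbf u}^K)_j|\lesssim h$, hence $\|\mathbf Q^K\vec{\mathbf u}^K\|_{\ell^2,K}\lesssim h$. This yields $|\mathcal O_2^K|\lesssim\sigma^K(\mathbf u_h)h^{1+d/2}\|\vec{\mathbf e}^K\|_{h,K}$. Inserting the first bound of Lemma~\ref{lem:system-damping-term} gives
\begin{equation*}
    |\mathcal O_2^K|\lesssim h^{k+2+d/2}\|\vec{\mathbf e}^K\|_{h,K}
    +h\,\|\vec{\mathbf e}^K\|_{h,K}\sum_{\tilde K\in\Omega_K}\|\vec{\mathbf e}^{\tilde K}\|_{h,\tilde K}.
\end{equation*}

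Summing over $K$ requires care. The first piece needs Cauchy--Schwarz together with quasi-uniformity, $\#\mathcal T_h\lesssim h^{-d}$. This gives $\sum_K h^{k+2+d/2}\|\vec{\mathbf e}^K\|_{h,K}\lesssim h^{k+2}\|\vec{\mathbf e}\|_h\lesssim h^{2k+4}+\|\vec{\mathbf e}\|_h^2$. The second piece uses Young's inequality and the fact that each element lies in only boundedly many patches $\Omega_K$, by shape regularity. This summation over neighbours and mesh cardinality is the only delicate bookkeeping step.

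Collecting the bounds and using $\|\cdot\|_U\sim\|\cdot\|_h$ gives $\frac{d}{dt}\|\vec{\mathbf e}\|_U^2\lesssim\|\vec{\mathbf e}\|_U^2+h^{2k}$. Gronwall's inequality and the initial error bound then conclude the proof.
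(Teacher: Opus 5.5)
Your proposal is correct and follows the route the paper intends: the paper gives no separate proof, relying instead on the system ESDG energy identity combined with the scalar ESOFDG splitting $\mathcal S_{OF}^K=\mathcal O_1^K+\mathcal O_2^K$, using Lemma~\ref{lem:system-damping-term} in place of Lemma~\ref{lem:damping-term}. Two of your steps fill in details the paper leaves implicit: the check that the damping term keeps $|\partial_t\mathbf u_{h,i}^K|\lesssim 1$ in $\mathcal S_0^K$, and the closing of the sums over $K$ via $\#\mathcal T_h\lesssim h^{-d}$ and bounded patch overlap.
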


\section{Numerical experiments}
\label{sec:numerical}

In this section, we show some numerical results to validate our error estimate in
Section~\ref{sec:scalar-case} and Section~\ref{sec:system-case}. We use a nonuniform mesh, which is 20\% random
perturbation of the uniform mesh for one-dimensional tests. Two-dimensional tests are performed on unstructured triangular meshes generated by 
Gmsh \cite{2009-GeuRe-GMSH}. 
The initial data are specified at the volume quadrature nodes. For the numerical experiments, we consider piecewise-polynomial approximations of degrees \(k=1,2,3\) and perform a comparison between the ESDG and ESOFDG schemes. Temporal discretization is achieved by the classical third-order SSP Runge–Kutta method \cite{2001-GottliebShu-SIREV}.
We set the time step \(\tau\) size as
\[
    \tau=\frac{\textrm{CFL}}{\sigma_0+\lambda_0} h^{\max(1,\frac{k+1}{3})},
\]
for one-dimensional problems, and the condition for two-dimensional problems is 
similar. 
Here, \(\sigma_0= \max_K \sigma^K(\mathbf{u}_h)\) defined in \eqref{eq:system-damp-coeff} and 
we set \(\sigma_0=0\) for the ESDG method,
\(\lambda_0\) is the maximum of the spectral radius of the Jacobian \(\mathbf{f}^\prime(\mathbf{u})\)
over each element.
In order to mitigate the contribution of temporal discretization errors to the total error, we adopt a fixed CFL number of $0.1$ for the numerical experiments described subsequently. For scalar conservation laws, the entropy‑stable flux is taken to be the local Lax–Friedrichs flux. In the system case, we solve the compressible Euler equations; the entropy‑conservative flux is that recommended by Chandrashekar \cite{2013-Chandra-CiCP}, and the entropy‑stable flux is realized via the local Lax–Friedrichs flux, with the extreme wave speeds suitably approximated \cite{2017-ChenShu-JCP}.

\subsection{One-dimensional problem}

\begin{exmp}
    We first consider the Burgers equation \(u_t+(\frac{u^2}{2})_x=0\) with periodic boundary condition.
    The initial condition is \(u_0(x)=e^{\cos x}\sin x+(\sin x)^2,x\in(0,2\pi).\) We adopt the entropy function \(U(u)=u^2+e^u\) and
    compute the solution at \(T=0.4\).  
\end{exmp}
The results of ESDG and ESOFDG are listed in Table~\ref{tab:burgers-esdg-esofdg}.
    It is reported that the convergence rate is below the optimal order
    and vary with the polynomial degree \(k\), which is
    better than our estimate. Such a phenomenon is broadly observed in experiments \cite{2017-ChenShu-JCP,2020-ChenShu-CSIAM,2024-LiuLuShu-SISC}.
\begin{table}[htbp]
    \centering
    \caption{Errors and orders of the ESDG and ESOFDG schemes for the one-dimensional Burgers equation
    at the final time \(T=0.4\).}
    \label{tab:burgers-esdg-esofdg}
\resizebox{\textwidth}{!}{
    \begin{tabular}{c cc cc cc|cc cc cc}
        \hline
        & \multicolumn{6}{c}{ESDG}
        & \multicolumn{6}{|c}{ESOFDG} \\
        \cline{2-13}
        & \multicolumn{2}{c}{\(k=1\)}
        & \multicolumn{2}{c}{\(k=2\)}
        & \multicolumn{2}{c}{\(k=3\)}
        & \multicolumn{2}{|c}{\(k=1\)}
        & \multicolumn{2}{c}{\(k=2\)}
        & \multicolumn{2}{c}{\(k=3\)} \\
        \cline{2-13}
        \(N\)
        & \(\|u-u_h\|_h\) & Order
        & \(\|u-u_h\|_h\) & Order
        & \(\|u-u_h\|_h\) & Order
        & \(\|u-u_h\|_h\) & Order
        & \(\|u-u_h\|_h\) & Order
        & \(\|u-u_h\|_h\) & Order \\
        \hline
        16
        & \(2.81\mathrm{E}{-01}\) & --
        & \(4.77\mathrm{E}{-02}\) & --
        & \(1.62\mathrm{E}{-02}\) & --
        & \(2.85\mathrm{E}{-01}\) & --
        & \(4.82\mathrm{E}{-02}\) & --
        & \(1.74\mathrm{E}{-02}\) & -- \\

        32
        & \(1.01\mathrm{E}{-01}\) & 1.480
        & \(1.10\mathrm{E}{-02}\) & 2.120
        & \(1.62\mathrm{E}{-03}\) & 3.322
        & \(1.01\mathrm{E}{-01}\) & 1.503
        & \(1.12\mathrm{E}{-02}\) & 2.106
        & \(1.65\mathrm{E}{-03}\) & 3.402 \\

        64
        & \(3.85\mathrm{E}{-02}\) & 1.386
        & \(2.03\mathrm{E}{-03}\) & 2.432
        & \(1.74\mathrm{E}{-04}\) & 3.213
        & \(3.86\mathrm{E}{-02}\) & 1.382
        & \(2.02\mathrm{E}{-03}\) & 2.473
        & \(1.76\mathrm{E}{-04}\) & 3.228 \\

        128
        & \(1.27\mathrm{E}{-02}\) & 1.595
        & \(4.05\mathrm{E}{-04}\) & 2.328
        & \(1.66\mathrm{E}{-05}\) & 3.390
        & \(1.27\mathrm{E}{-02}\) & 1.598
        & \(4.04\mathrm{E}{-04}\) & 2.319
        & \(1.67\mathrm{E}{-05}\) & 3.401 \\

        256
        & \(4.00\mathrm{E}{-03}\) & 1.672
        & \(6.56\mathrm{E}{-05}\) & 2.625
        & \(1.10\mathrm{E}{-06}\) & 3.923
        & \(3.99\mathrm{E}{-03}\) & 1.673
        & \(6.55\mathrm{E}{-05}\) & 2.624
        & \(1.10\mathrm{E}{-06}\) & 3.925 \\

        512
        & \(1.25\mathrm{E}{-03}\) & 1.678
        & \(1.09\mathrm{E}{-05}\) & 2.595
        & \(7.88\mathrm{E}{-08}\) & 3.798
        & \(1.25\mathrm{E}{-03}\) & 1.678
        & \(1.09\mathrm{E}{-05}\) & 2.594
        & \(7.88\mathrm{E}{-08}\) & 3.798 \\
        \hline
    \end{tabular}
}
\end{table}

\subsection{Two-dimensional problem}

\begin{exmp}
    We then test the two-dimensional Burgers equation which reads \(u_t+(\frac{u^2}{2})_x+(\frac{u^2}{2})_y=0.\)
    The initial condition is \(u_0(x,y)=0.5\sin(2\pi(x+y)),(x,y)\in(0,1)\times(0,1)\).
    We simply take the square entropy function \(U(u)=\frac{u^2}{2}\) and evolve both schemes up to
    \(T=0.1\). 
\end{exmp}
The results are tabulated in Table~\ref{tab:burgers-2d-esdg-esofdg}. For $k=1,3$, we observe that the convergence rates of both ESDG and ESOFDG are slightly above $k$ but do not reach $k+\frac 12$.
\begin{table}[htbp]
    \centering
    \caption{Errors and orders of the ESDG and ESOFDG schemes for the two-dimensional Burgers equation
    at the final time \(T=0.1\).}
    \label{tab:burgers-2d-esdg-esofdg}
\resizebox{\textwidth}{!}{
    \begin{tabular}{c cc cc cc|cc cc cc}
        \hline
        & \multicolumn{6}{c}{ESDG}
        & \multicolumn{6}{|c}{ESOFDG} \\
        \cline{2-13}
        & \multicolumn{2}{c}{\(k=1\)}
        & \multicolumn{2}{c}{\(k=2\)}
        & \multicolumn{2}{c}{\(k=3\)}
        & \multicolumn{2}{|c}{\(k=1\)}
        & \multicolumn{2}{c}{\(k=2\)}
        & \multicolumn{2}{c}{\(k=3\)} \\
        \cline{2-13}
        \(1/h\)
        & \(\|u-u_h\|_h\) & Order
        & \(\|u-u_h\|_h\) & Order
        & \(\|u-u_h\|_h\) & Order
        & \(\|u-u_h\|_h\) & Order
        & \(\|u-u_h\|_h\) & Order
        & \(\|u-u_h\|_h\) & Order \\
        \hline
        8
        & \(3.98\mathrm{E}{-02}\) & --
        & \(1.12\mathrm{E}{-02}\) & --
        & \(3.66\mathrm{E}{-03}\) & --
        & \(3.99\mathrm{E}{-02}\) & --
        & \(1.11\mathrm{E}{-02}\) & --
        & \(3.71\mathrm{E}{-03}\) & -- \\

        16
        & \(1.87\mathrm{E}{-02}\) & 1.092
        & \(3.00\mathrm{E}{-03}\) & 1.895
        & \(6.10\mathrm{E}{-04}\) & 2.585
        & \(1.87\mathrm{E}{-02}\) & 1.096
        & \(3.00\mathrm{E}{-03}\) & 1.885
        & \(6.17\mathrm{E}{-04}\) & 2.587 \\

        32
        & \(8.10\mathrm{E}{-03}\) & 1.205
        & \(6.84\mathrm{E}{-04}\) & 2.135
        & \(7.92\mathrm{E}{-05}\) & 2.946
        & \(8.11\mathrm{E}{-03}\) & 1.204
        & \(6.85\mathrm{E}{-04}\) & 2.132
        & \(7.96\mathrm{E}{-05}\) & 2.954 \\

        64
        & \(3.38\mathrm{E}{-03}\) & 1.260
        & \(1.31\mathrm{E}{-04}\) & 2.388
        & \(9.68\mathrm{E}{-06}\) & 3.033
        & \(3.38\mathrm{E}{-03}\) & 1.261
        & \(1.31\mathrm{E}{-04}\) & 2.389
        & \(9.70\mathrm{E}{-06}\) & 3.036 \\

        128
        & \(1.37\mathrm{E}{-03}\) & 1.298
        & \(2.35\mathrm{E}{-05}\) & 2.474
        & \(1.07\mathrm{E}{-06}\) & 3.174
        & \(1.38\mathrm{E}{-03}\) & 1.298
        & \(2.35\mathrm{E}{-05}\) & 2.474
        & \(1.07\mathrm{E}{-06}\) & 3.176 \\

        256
        & \(5.42\mathrm{E}{-04}\) & 1.344
        & \(3.89\mathrm{E}{-06}\) & 2.598
        & \(1.12\mathrm{E}{-07}\) & 3.256
        & \(5.42\mathrm{E}{-04}\) & 1.345
        & \(3.89\mathrm{E}{-06}\) & 2.598
        & \(1.12\mathrm{E}{-07}\) & 3.257 \\
        \hline
    \end{tabular}
}
\end{table}

\begin{exmp}
    We now consider the vortex evolution problem for two-dimensional Euler equations \cite{2017-ChenShu-JCP}.
    The computational domain is \([0,20]^2\) and the initial condition is given by
    \begin{align*}
        &w_1(\mathbf{x},0)=1-(x_2-y_2)\phi(r), \quad w_2(\mathbf{x},0)=1+(x_1-y_1)\phi(r), \\
        &\frac{p(\mathbf{x},0)}{\rho(\mathbf{x},0)}=1-\frac{\gamma-1}{2\gamma}\phi(r)^2, \quad p(\mathbf{x},0)=\rho(\mathbf{x},0)^\gamma.
    \end{align*}
    Here \(r=\sqrt{(x_1-y_1)^2+(x_2-y_2)^2}\). 
    We take \((y_1,y_2)=(10,10)\) and \(\phi(r)=\frac{5}{2\pi}e^{\frac{1-r^2}{2}}\).
    The exact solution is \(\mathbf{u}(x,y,t)=\mathbf{u}(x-t,y-t,0)\).
    We use the exact solution to prescribe boundary conditions and list the computational results at
    \(T=0.1\). 
\end{exmp}
As reported in Table~\ref{tab:2d-euler-esdg-esofdg}, the observed convergence rates remain slightly below the optimal threshold $k+1$. Furthermore, the ESOFDG scheme achieves rates that are almost indistinguishable from those of the ESDG scheme. This verifies that the inclusion of damping terms does not sacrifice accuracy when applied to the Euler systems.
\begin{table}[htbp]
    \centering
    \caption{Errors and orders of the ESDG and ESOFDG schemes for the two-dimensional Euler equation
    at the final time \(T=0.1\).}
    \label{tab:2d-euler-esdg-esofdg}
\resizebox{\textwidth}{!}{
    \begin{tabular}{c cc cc cc|cc cc cc}
        \hline
        & \multicolumn{6}{c}{ESDG}
        & \multicolumn{6}{|c}{ESOFDG} \\
        \cline{2-13}
        & \multicolumn{2}{c}{\(k=1\)}
        & \multicolumn{2}{c}{\(k=2\)}
        & \multicolumn{2}{c}{\(k=3\)}
        & \multicolumn{2}{|c}{\(k=1\)}
        & \multicolumn{2}{c}{\(k=2\)}
        & \multicolumn{2}{c}{\(k=3\)} \\
        \cline{2-13}
        \(20/h\)
        & \(\|\mathbf{u}-\mathbf{u}_h\|_h\) & Order
        & \(\|\mathbf{u}-\mathbf{u}_h\|_h\) & Order
        & \(\|\mathbf{u}-\mathbf{u}_h\|_h\) & Order
        & \(\|\mathbf{u}-\mathbf{u}_h\|_h\) & Order
        & \(\|\mathbf{u}-\mathbf{u}_h\|_h\) & Order
        & \(\|\mathbf{u}-\mathbf{u}_h\|_h\) & Order \\
        \hline
        16
        & \(3.32\mathrm{E}{-01}\) & --
        & \(8.98\mathrm{E}{-02}\) & --
        & \(2.60\mathrm{E}{-02}\) & --
        & \(3.31\mathrm{E}{-01}\) & --
        & \(9.00\mathrm{E}{-02}\) & --
        & \(2.65\mathrm{E}{-02}\) & -- \\

        32
        & \(1.54\mathrm{E}{-01}\) & 1.111
        & \(2.30\mathrm{E}{-02}\) & 1.968
        & \(2.42\mathrm{E}{-03}\) & 3.422
        & \(1.53\mathrm{E}{-01}\) & 1.110
        & \(2.30\mathrm{E}{-02}\) & 1.969
        & \(2.43\mathrm{E}{-03}\) & 3.446 \\

        64
        & \(6.45\mathrm{E}{-02}\) & 1.252
        & \(4.11\mathrm{E}{-03}\) & 2.482
        & \(1.93\mathrm{E}{-04}\) & 3.651
        & \(6.45\mathrm{E}{-02}\) & 1.249
        & \(4.11\mathrm{E}{-03}\) & 2.484
        & \(1.93\mathrm{E}{-04}\) & 3.655 \\

        128
        & \(2.24\mathrm{E}{-02}\) & 1.524
        & \(6.68\mathrm{E}{-04}\) & 2.621
        & \(1.49\mathrm{E}{-05}\) & 3.696
        & \(2.24\mathrm{E}{-02}\) & 1.523
        & \(6.68\mathrm{E}{-04}\) & 2.621
        & \(1.49\mathrm{E}{-05}\) & 3.697 \\

        256
        & \(7.66\mathrm{E}{-03}\) & 1.550
        & \(1.10\mathrm{E}{-04}\) & 2.602
        & \(1.28\mathrm{E}{-06}\) & 3.538
        & \(7.66\mathrm{E}{-03}\) & 1.550
        & \(1.10\mathrm{E}{-04}\) & 2.603
        & \(1.28\mathrm{E}{-06}\) & 3.537 \\

        512
        & \(2.46\mathrm{E}{-03}\) & 1.640
        & \(1.82\mathrm{E}{-05}\) & 2.599
        & \(1.16\mathrm{E}{-07}\) & 3.469
        & \(2.46\mathrm{E}{-03}\) & 1.640
        & \(1.82\mathrm{E}{-05}\) & 2.599
        & \(1.16\mathrm{E}{-07}\) & 3.469 \\
        \hline
    \end{tabular}
}
\end{table}

\section{Concluding Remarks}
\label{sec:conclude}
A rigorous error analysis is established for the semi-discrete entropy-stable discontinuous Galerkin method applied to scalar hyperbolic conservation laws and systems. Through a finite-difference-type consistency–stability framework, $k$-th order convergence is proved in a quadrature-based norm equivalent to the broken $L^2$  norm on the finite-dimensional reconstruction space. The analysis extends to the entropy-stable oscillation-free DG method, where the additional damping terms are shown not to degrade the convergence order. Although numerical experiments indicate observed rates up to half an order above the theoretical bound, these results furnish a rigorous convergence theory for this class of entropy-stable schemes. It remains open whether the $k$-th order bound is sharp for certain numerical examples, and whether sharper $(k+\frac 12)$-th order estimates can be derived within the variational framework of the finite element method.

\begin{appendices}
\section{Some technical proofs of Lemmas}
\label{sec:app}
In this appendix, we provide some technical proofs of lemmas in the error analysis.
\subsection{Proof of Lemma~\ref{lem:scaling-sbp}}
\label{app:lem-scaling}
We first prove the estimates for scalar nodal data. Let
\[
    F_K(\widehat{\mathbf x})
    =
    A_K\widehat{\mathbf x}+\mathbf b_K,
\]
be the affine mapping from the reference simplex \(\widehat K\) onto
\(K\). By shape regularity,
\begin{equation*}
    \|A_K\|\lesssim h_K,
    \quad
    \|A_K^{-1}\|\lesssim h_K^{-1},
    \quad
    |\det A_K|\sim h_K^d.
\end{equation*}
Then by the construction of SBP operators on the physical element \eqref{eq:affine-map-sbp},
it follows directly
\[
        \omega_j^K \sim h_K^d,\quad
        \tau_s^\gamma\sim h_K^{d-1},
\]
and
\begin{align*}
        \| D_m^K\|_{\ell^q}\lesssim h_K^{-1},
        \quad
        \| M^K\|_{\ell^q}\lesssim h_K^{d}, \quad \| R^{\gamma,K}\|_{\ell^q}\lesssim 1,
        \quad
        \| Q^K\|_{\ell^q} \lesssim 1.
\end{align*}
Consequently, for every scalar volume nodal vector
\(\vec w^{\,K}\),
\[
    \|\vec w^{\,K}\|_{h,K}^2
    =
    \sum_{j=1}^{N_{Q,k}}
    \omega_j^K|w_j^K|^2
    \sim
    h_K^d
    \sum_{j=1}^{N_{Q,k}}|w_j^K|^2
    =
    h_K^d\|\vec w^{\,K}\|_{\ell^2,K}^2.
\]
Moreover, by the orthogonal relation \eqref{eq:QK-ortho}, we have
\begin{align*}
    \|\vec w^{\,K}\|_{h,K}^2&=\|Q^K\vec w^{\,K}\|_{h,K}^2+\|(I-Q^K)\vec w^{\,K}\|_{h,K}^2
    \\
    &\geq  \|Q^K\vec w^{\,K}\|_{h,K}^2.
\end{align*}
It remains to extend the estimates to vector-valued nodal data. For
every matrix \(A\), let \(\mathbf A=A\otimes I_n\). If
\(\vec{\mathbf z}=(\mathbf z_j)_j\), then
\[
    \bigl|(\mathbf A\vec{\mathbf z})_i\bigr|
    =
    \left|
        \sum_j A_{ij}\mathbf z_j
    \right|
    \le
    \sum_j|A_{ij}|\,|\mathbf z_j|.
\]
Since the nodal dimensions and \(n\) are fixed, the scalar operator
estimates therefore imply the corresponding estimates for
\(\mathbf A\) under the block nodal \(\ell^q\)-norms. 
Finally,
\[
    \|\vec{\mathbf w}^{\,K}\|_{h,K}^2
    =
    \sum_{j=1}^{N_{Q,k}}
    \omega_j^K|\mathbf w_j^K|^2
    \sim
    h_K^d
    \|\vec{\mathbf w}^{\,K}\|_{\ell^2,K}^2,
\]
and, since \(\mathbf Q^K\) is an
\(\mathbf M^K\)-orthogonal projection,
\[
    \|\mathbf Q^K\vec{\mathbf w}^{\,K}\|_{h,K}
    \le
    \|\vec{\mathbf w}^{\,K}\|_{h,K}.
\]
This completes the proof.

\subsection{Proof of Lemma~\ref{lem:Phi-structure}}
\label{app:lem-phi-structure}
For notational simplicity, we fix \(m\) and write
\(\boldsymbol{\Phi}=\boldsymbol{\Phi}_m\). Let
$\mathbf{x}
    =
    \frac{\mathbf{a}+\mathbf{b}}{2},
    \mathbf{y}
    =
    \frac{\mathbf{a}-\mathbf{b}}{2}
$,
and define
$
    \mathbf{G}(\mathbf{x},\mathbf{y})
    :=
    \boldsymbol{\Phi}(\mathbf{x}+\mathbf{y}, \mathbf{x}-\mathbf{y})$. Since \(\mathbf{f}_{m,S}\) is consistent, we have
\[\mathbf{G}(\mathbf{x},0)=\mathbf{0},\quad D_{\mathbf{x}}\mathbf{G}(\mathbf{x},0)=\mathbf{0}.\]
And \(\boldsymbol{\Phi}\) is symmetric about $\mathbf{a}$ and $\mathbf{b}$, then 
\[D_{\mathbf{y}}\mathbf{G}(\mathbf{x},0)=\mathbf{0}.\]
By the fundamental theorem of calculus,
\begin{align*}
    D_{\mathbf{x}}\mathbf{G}(\mathbf{x},\mathbf{y})
    &=
    \int_0^1
    \frac{d}{dt}
    D_{\mathbf{x}}\mathbf{G}(\mathbf{x},t\mathbf{y})
    \,dt 
    =
    \int_0^1
    D_{\mathbf{y}}D_{\mathbf{x}}
    \mathbf{G}(\mathbf{x},t\mathbf{y})
    [\mathbf{y}]
    \,dt,
\end{align*}
where $D_{\mathbf{y}}D_{\mathbf{x}}
    \mathbf{G}(\mathbf{x},t\mathbf{y})
    [\mathbf{y}] \in \mathbb{R}^{n\times n}$, the $(i,j)$-th entry is 
    \[(D_{\mathbf{y}}D_{\mathbf{x}}
    \mathbf{G}(\mathbf{x},t\mathbf{y})
    [\mathbf{y}])_{ij}=\sum_{l=1}^n\frac{\partial^2 (\mathbf{G})_i}{\partial y_l \partial x_j}(\mathbf{x},t\mathbf{y})y_l.\]
Since
\(\boldsymbol{\Phi}\in C^3(\mathbb{R}^n\times\mathbb{R}^n)\), the
second derivatives of \(\mathbf{G}\) are uniformly bounded on the
corresponding compact set $S$. We obtain
\[\left\|
        D_{\mathbf{x}}\mathbf{G}(\mathbf{x},\mathbf{y})
    \right\|
   \lesssim |\mathbf{y}|.\]
Similarly, we also have
\[\left\|
        D_{\mathbf{y}}\mathbf{G}(\mathbf{x},\mathbf{y})
    \right\|
   \lesssim |\mathbf{y}|.\]
Hence,
\[
\left\|
        D_{\mathbf{a}}\boldsymbol{\Phi}(\mathbf{a},\mathbf{b})
    \right\|
    +
    \left\|
        D_{\mathbf{b}}\boldsymbol{\Phi}(\mathbf{a},\mathbf{b})
    \right\|
    \lesssim |\mathbf{a}-\mathbf{b}|.\]
Finally, since \(\mathbf{a},\mathbf{b},\mathbf{c},\mathbf{d}
    \in S\) and satisfy
\[
    |\mathbf{a}-\mathbf{b}|
    +
    |\mathbf{c}-\mathbf{d}|
    \lesssim h,\]
and 
\begin{align*}
    \boldsymbol{\Phi}(\mathbf{a},\mathbf{b})
    -
    \boldsymbol{\Phi}(\mathbf{c},\mathbf{d})
    =
    \int_0^1
    \Bigl(
        &D_{\mathbf{a}}\boldsymbol{\Phi}
        (\mathbf{c}+t(\mathbf{a}-\mathbf{c}),\mathbf{d}+t(\mathbf{b}-\mathbf{d}))
        [\mathbf{a}-\mathbf{c}] \\
        &+
        D_{\mathbf{b}}\boldsymbol{\Phi}
        (\mathbf{c}+t(\mathbf{a}-\mathbf{c}),\mathbf{d}+t(\mathbf{b}-\mathbf{d}))
        [\mathbf{b}-\mathbf{d}]
    \Bigr)
    \,dt,
\end{align*}
we have
\[
    \left|
        \boldsymbol{\Phi}(\mathbf{a},\mathbf{b})
        -
        \boldsymbol{\Phi}(\mathbf{c},\mathbf{d})
    \right|
    \lesssim h
    \left(
        |\mathbf{a}-\mathbf{c}|
        +
        |\mathbf{b}-\mathbf{d}|
    \right).\]
This completes the proof.

\subsection{Proof of Proposition~\ref{prop:polynomial-reconstruction}}
\label{app:prop-reconstruction}
We first prove the necessity. 
Suppose that such a space \(V_h(K)\) exists. Since
\[
    \mathcal P^k(K)\subset V_h(K),
\]
and
\[
    \mathcal N_K:V_h(K)\to\mathbb R^{N_{Q,k}},
\]
is an isomorphism, the restriction
\[
    \mathcal N_K|_{\mathcal P^k(K)}:\mathcal P^k(K)\to\mathbb R^{N_{Q,k}},
\]
is injective. Hence, the corresponding Vandermonde matrix \(V_k^K\) has full column rank.

We now prove the sufficiency. Assume that \(V_k^K\) has full column
rank. Then
\[
    \mathcal N_K:
    \mathcal P^k(K)
    \to
    U:=\mathcal N_K(\mathcal P^k(K)),
\]
is an isomorphism. Choose a complementary subspace \(W\) such that
\[
    \mathbb R^{N_{Q,k}}=U\oplus W.
\]
Since the quadrature nodes are pairwise distinct, there exists a
polynomial interpolation operator
\[
    \mathcal I_K:
    \mathbb R^{N_{Q,k}}
    \to
    \mathcal P^{N_{Q,k}-1}(K),
\]
such that
\[
    \mathcal N_K\mathcal I_K
    =
    I_{\mathbb R^{N_{Q,k}}}.
\]
Indeed, one may define
\[
    \mathcal I_K\vec a
    :=
    \sum_{i=1}^{N_{Q,k}}a_iL_i,
    \quad \forall \vec{a}=(a_1,\cdots,a_{N_{Q,k}})^T \in \mathbb{R}^{N_{Q,k}},
\]
where
\[
    L_i(\mathbf x)
    :=
    \prod_{j\ne i}
    \frac{
        (\mathbf x_i-\mathbf x_j)\cdot
        (\mathbf x-\mathbf x_j)
    }{
        |\mathbf x_i-\mathbf x_j|^2
    }.\]
Set
\[
    V_h(K)
    :=
    \mathcal P^k(K)+\mathcal I_K(W).
\]
We claim that the sum is direct. If
\(p=\mathcal I_K\vec w\) for some
\(p\in\mathcal P^k(K)\) and \(\vec w\in W\), then
\[
    \mathcal N_Kp
    =
    \mathcal N_K\mathcal I_K\vec w
    =
    \vec w
    \in U\cap W,
\]
and hence \(\vec w=0\) and \(p=0\). Moreover,
\(\mathcal N_K\) maps \(\mathcal P^k(K)\) isomorphically onto \(U\)
and \(\mathcal I_K(W)\) isomorphically onto \(W\). Therefore,
\[
    \mathcal N_K:
    V_h(K)
    \to
    U\oplus W
    =
    \mathbb R^{N_{Q,k}},
\]
is an isomorphism. In particular,
\[
    \dim V_h(K)=N_{Q,k}.
\]
Finally, by the construction above, we have
\[
    \mathcal P^k(K)
    \subset
    V_h(K)
    \subset
    \mathcal P^r(K),
    \quad
    r:=\max\{k,N_{Q,k}-1\}.
\]

\subsection{Proof of Lemma~\ref{lem:damping-term}}
\label{app:lem-damping}

Let \(\mathcal I_hu\) be the nodal interpolation function of \(u\) in the
reconstruction space \(V_h(K)\), and set
\[
    w_h:=\mathcal I_hu,
    \quad
    z_h:=u_h-w_h.
\]
One can easily check that the triangle inequality holds
\[
    \sigma^K(u_h)
    \le
    \sigma^K(z_h)+\sigma^K(w_h).
\]
Using the standard interpolation estimate we get
\begin{align*}
    h^l
    \left|
        [\![\partial^\alpha w_h|_\nu]\!]
    \right|
    =
    h^l
    \left|
        [\![\partial^\alpha(w_h-u)|_\nu]\!]
    \right| 
    \lesssim
    h^l
    \sum_{\tilde K\in\Omega_K}
    \left\|
        \partial^\alpha(w_h-u)
    \right\|_{L^\infty(\tilde K)} 
    \lesssim
    h^l h^{k+1-l}
    =
    h^{k+1}.
\end{align*}
Since the number of faces is fixed, it follows
from the definition of the damping coefficient that
\[
    \sigma^K(w_h)
    \lesssim
    h^{k+1}.
\]
For \(\sigma^K(z_h)\), the inverse estimate and the norm equivalence give
\begin{align*}
    h^l
    \left|
        [\partial^\alpha z_h|_\nu]
    \right|
    \lesssim
    h^l
    \sum_{\tilde K\in\Omega_K}
    \|\partial^\alpha z_h\|_{L^\infty(\tilde K)}
    \lesssim
    h^{-d/2}
    \sum_{\tilde K\in\Omega_K}
    \|z_h\|_{L^2(\tilde K)}
    \lesssim
    h^{-d/2}
    \sum_{\tilde K\in\Omega_K}
    \|z_h\|_{h,\tilde K}.
\end{align*}
Moreover, we have \(\vec z^{\,\tilde K}=-\vec e^{\,\tilde K}\) since \(\mathcal I_hu\) interpolates \(u\) at the volume
quadrature nodes.
Therefore,
\[
    \sigma^K(z_h)
    \lesssim
    h^{-d/2}
    \sum_{\tilde K\in\Omega_K}
    \|\vec e^{\,\tilde K}\|_{h,\tilde K}.
\]
Combining the preceding estimates yields
\[
    \sigma^K(u_h)
    \lesssim
    h^{k+1}
    +
    h^{-d/2}
    \sum_{\tilde K\in\Omega_K}
    \|\vec e^{\,\tilde K}\|_{h,\tilde K}.
\]
Moreover, under Assumption~\ref{ass:a-priori},
\[
    \|\vec e^{\,\tilde K}\|_{h,\tilde K}
    =\sqrt{\sum_{i=1}^{N_{Q,k}}\omega_i^{\tilde{K}} (e_i^{\tilde{K}})^2 }
    \lesssim
    h^{d/2}
    \|e\|_{L^\infty(\tilde K)}
    \lesssim
    h^{1+d/2}.
\]
Since the number of elements in \(\Omega_K\) is uniformly bounded,
we conclude that
\[
    \sigma^K(u_h)\lesssim h.
\]
This completes the proof.

\end{appendices}

\end{document}